\newcommand{\R}[0]{\mathbb{R}}
\newcommand{\C}[0]{\mathbb{C}}
\newcommand{\N}[0]{\mathbb{N}}
\newcommand{\ta}[0]{\theta}
\newcommand{\bs}[0]{\setminus}
\newcommand{\ld}[0]{\lambda}
\newcommand{\vep}[0]{\varepsilon}
\newcommand{\supp}[0]{\operatorname{supp}}
\newcommand{\lsm}[0]{\lesssim}
\newcommand{\om}[0]{\omega}
\newcommand{\vp}[0]{\varphi}
\newcommand{\wh}[1]{\widehat{#1}}
\newcommand{\wc}[1]{\widecheck{#1}}
\newcommand{\mc}[1]{\mathcal{#1}}
\newcommand{\ov}[1]{\overline{#1}}
\newcommand{\wt}[1]{\widetilde{#1}}
\newcommand{\st}[1]{\substack{#1}}
\newcommand{\mb}[1]{\mathbf{#1}}
\newcommand{\nms}[1]{\| #1 \|}
\newcommand{\geom}[0]{\operatorname{geom}}
\newcommand{\E}[0]{\mathcal{E}}
\newcommand{\avg}[1]{\underset{#1}{\operatorname{Avg}}}
\newcommand{\Mod}[1]{\ (\mathrm{mod}\ #1)}
\newtheorem{thm}{Theorem}[section]
\newtheorem{lemma}[thm]{Lemma}
\newtheorem{cor}[thm]{Corollary}
\newtheorem{defn}[thm]{Definition}
\theoremstyle{remark}
\newtheorem{rem}{Remark}
\keywords{$l^2$ decoupling, efficient congruencing}
\subjclass[2010]{Primary 42B15; Secondary 11L07}
\title[An $l^2$ decoupling interpretation of efficient congruencing]{An $l^2$ decoupling interpretation of efficient congruencing: the parabola}
\author{Zane Kun Li}
\address{Department of Mathematics,
Indiana University\\
831 East 3rd St\\
Bloomington IN 47405\\
USA}
\email{zkli@iu.edu}
\begin{document}
\begin{abstract}
We give a new proof of $l^2$ decoupling for the parabola inspired from
efficient congruencing. Making quantitative this proof matches
a bound obtained by Bourgain for the discrete restriction problem for the parabola.
We illustrate similarities and differences
between this new proof and efficient congruencing and the proof of decoupling by Bourgain and Demeter.
We also show
where tools from decoupling such as $l^2 L^2$ decoupling, Bernstein's inequality, and ball inflation come into play.
\end{abstract}

\maketitle

\section{Introduction}
For an interval $J \subset [0, 1]$ and $g: [0, 1] \rightarrow \C$, we define
\begin{align*}
(\E_{J}g)(x) := \int_{J}g(\xi)e(\xi x_1 + \xi^2 x_2)\, d\xi
\end{align*}
where $e(a) := e^{2\pi i a}$.
For an interval $I$, let $P_{\ell}(I)$ be the partition
of $I$ into intervals of length $\ell$. By writing $P_{\ell}(I)$, we are assuming that $|I|/\ell \in \N$.
We will also similarly define $P_{\ell}(B)$ for squares $B$ in $\R^2$.
Next if $B = B(c, R)$ is a square in $\R^2$ centered at $c$ of side length $R$,
let
\begin{align}\label{wbdef}
w_{B}(x) := (1 + \frac{|x - c|}{R})^{-100}.
\end{align}
We will always assume that our squares have sides parallel to the $x$ and $y$-axis.
We observe that $1_B \leq 2^{100}w_B$.
For a function $w$, we define $$\nms{f}_{L^{p}(w)} := (\int_{\R^2}|f(x)|^{p}w(x)\, dx)^{1/p}.$$

For $\delta \in \N^{-1} = \{n^{-1} : n \in \N\}$, let $D(\delta)$ be the best constant such that
\begin{align}\label{decdef}
\nms{\E_{[0, 1]}g}_{L^{6}(B)} \leq D(\delta)(\sum_{J \in P_{\delta}([0, 1])}\nms{\E_{J}g}_{L^{6}(w_B)}^{2})^{1/2}
\end{align}
for all $g: [0, 1] \rightarrow \C$ and all squares $B$ in $\R^2$ of side length $\delta^{-2}$.
Let $D_{p}(\delta)$ be the decoupling constant where the $L^6$ in \eqref{decdef} is replaced with $L^{p}$.
Since $1_B \lsm w_B$, the triangle inequality combined with the Cauchy-Schwarz inequality shows that $D_{p}(\delta) \lsm_{p} \delta^{-1/2}$ for all $1 \leq p \leq \infty$.
The $l^2$ decoupling theorem for the paraboloid proven by Bourgain and Demeter in \cite{bd} implies that for the parabola
we have $D_{p}(\delta) \lsm_{\vep} \delta^{-\vep}$ for $2 \leq p \leq 6$ and this range of $p$ is sharp.

Decoupling-type inequalities were first studied by Wolff in \cite{wolff}.
Following the proof of $l^2$ decoupling for the paraboloid by Bourgain and Demeter
in \cite{bd}, decoupling inequalities for various curves and surfaces have found many applications to
analytic number theory (see for example \cite{zeta, bourgainmvt, bdweyl, bdguo, bdg, meansquare, prend, guozhang, guozorin, heathbrown}).
Most notably is the proof of Vinogradov's mean value theorem by Bourgain-Demeter-Guth using decoupling for the moment curve $t \mapsto (t, t^2, \ldots, t^n)$
in \cite{bdg}. Wooley in \cite{nested} was also able to prove Vinogradov's mean value theorem using his nested efficient congruencing method.

This paper probes the connections between efficient congruencing and $l^2$ decoupling in the simplest case of the parabola.
For a slightly different interpretation of the relation between efficient congruencing and decoupling for the cubic moment curve inspired
from \cite{hbwooley}, see \cite{guoliyung}.
See also \cite{glyzk} for an interpretation of \cite{nested} in the decoupling language which provides an alternative proof of decoupling
for the moment curve in $\R^d$ different from the proof in \cite{bdg}.

Our proof of $l^2$ decoupling for the parabola is inspired by the
exposition of Wooley's efficient congruencing in Pierce's Bourbaki seminar exposition \cite[Section 4]{pierce}. This proof will give the following result.
\begin{thm}\label{ef2d_main}
For $\delta \in \N^{-1}$ such that $0 < \delta < e^{-200^{200}}$, we have
\begin{align*}
D(\delta) \leq \exp(30\frac{\log\frac{1}{\delta}}{\log\log\frac{1}{\delta}}).
\end{align*}
\end{thm}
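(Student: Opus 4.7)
The proof strategy I propose follows the scale-iteration philosophy of efficient congruencing. The plan is to introduce a family of auxiliary multiscale decoupling constants $M_k(\delta)$, indexed by an integer $0 \leq k \leq K$, where $M_k(\delta)$ is the best constant in a bilinear inequality on squares of side length $\delta^{-2}$ involving products $|(\E_{J_1}g)(\E_{J_2}g)|^{1/2}$ with $J_1 \in P_{\delta}([0,1])$, $J_2 \in P_{\sigma_k}([0,1])$, and the transversality requirement $\operatorname{dist}(J_1, J_2) \geq \sigma_k$. The scales $\sigma_k$ would decrease geometrically, say $\sigma_k = \delta^{\beta^{K-k}}$ for some $\beta \in (0,1)$. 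At $k = 0$ the quantity $M_0(\delta)$ should essentially control $D(\delta)$ via a standard bilinear-to-linear reduction (triangle plus Cauchy--Schwarz on the diagonal terms, absorbing the remaining narrow part by parabolic rescaling); at $k = K$, where $\sigma_K \approx \delta$, the quantity $M_K(\delta)$ is controlled by the trivial bound $D \lsm \delta^{-1/2}$.

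The heart of the argument should be a single recursive inequality of the form
$$M_k(\delta) \leq C \cdot M_{k+1}(\delta)^{\ta},$$
with $\ta \in (0,1)$ and $C$ independent of $k$ and $\delta$. Its derivation should marshal the three tools highlighted in the abstract. First, apply parabolic rescaling on each $J_2$ to renormalize it to an interval of length $1$, converting the problem into a bilinear decoupling at scale $\delta/\sigma_k$ on a square of side $(\delta/\sigma_k)^{-2}$. Second, invoke $l^2 L^2$ decoupling (Plancherel orthogonality) at the fine scale $\delta^{-1/2}$, which is essentially free but only controls $L^2$. Third, apply ball inflation to upgrade an $L^6$ average over subsquares to an $L^6$ average over the ambient square, exploiting the transversality of $J_1, J_2$ as a lower-dimensional decoupling gain. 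Fourth, restore the $L^6$ norm from $L^2$ via Bernstein, at the cost of a controlled power of $\delta/\sigma_k$.

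I would then iterate this recursion $K = \lfloor c \log\log(1/\delta) \rfloor$ times. Telescoping gives
$$M_0(\delta) \leq C^{1 + \ta + \cdots + \ta^{K-1}} M_K(\delta)^{\ta^K} \leq C^{1/(1-\ta)}\, \delta^{-\ta^K/2}.$$
Tuning $\beta$ and $\ta$ so that $\ta^K$ is comparable to $1/\log\log(1/\delta)$ balances the exponent gain against the geometric series of accumulated constants, producing the bound $D(\delta) \leq \exp(30\log(1/\delta)/\log\log(1/\delta))$. The hypothesis $\delta < e^{-200^{200}}$ is precisely what ensures $\log\log(1/\delta)$ is large enough to absorb the base constants from the bilinear-to-linear reduction and from each invocation of the four ingredients above.

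The main obstacle I anticipate is effective bookkeeping. Each application of parabolic rescaling, $l^2 L^2$ decoupling, ball inflation, and Bernstein introduces multiplicative constants and small polynomial losses in the scale parameters that must be tracked explicitly, rather than absorbed into an $\vep$-loss as in Bourgain--Demeter. Moreover, arranging $M_k(\delta)$ so that the separation condition $\operatorname{dist}(J_1,J_2) \geq \sigma_k$ is preserved under each parabolic rescaling and each spatial decomposition required by ball inflation is delicate; this is the direct analog of the ``good prime'' conditioning step in efficient congruencing that keeps the iteration from degenerating. Finally, optimizing the free parameters $\beta$, $\ta$, $K$, and the internal numerology of $M_k$ so as to produce the explicit prefactor $30$, rather than a larger constant, will require careful numerical tuning throughout.
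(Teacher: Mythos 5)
Your proposal correctly identifies the tools (bilinear reduction, parabolic rescaling, $l^2L^2$ decoupling, ball inflation, Bernstein) and the overall philosophy of iterating over a ladder of scales, but the heart of your argument---the single recursive inequality $M_k(\delta) \leq C\,M_{k+1}(\delta)^{\theta}$ with $\theta \in (0,1)$ and $C$ a pure constant---cannot hold, and fixing it changes the entire downstream bookkeeping.

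The problem is dimensional. Each time you pass from one scale to the next, you must decouple the intermediate-length interval down to a finer scale, and H\"older at exponent $6$ unavoidably produces a full sixth power of one factor. Controlling that sixth power requires invoking the \emph{linear} decoupling constant at the intermediate scale. Concretely, the correct one-step inequality in the paper (combining Lemmas \ref{switch} and \ref{abup} into \eqref{brev}) is
\begin{align*}
M_{a,b}(\delta,\nu) \;\lsm\; \nu^{-1/6}\, M_{b,2b}(\delta,\nu)^{1/2}\, D\!\left(\tfrac{\delta}{\nu^{b}}\right)^{1/2},
\end{align*}
where the exponent $1/2$ on the bilinear constant is forced by Cauchy--Schwarz, not a tunable $\theta$, and the factor $D(\delta/\nu^{b})^{1/2}$ is not absorbable into a $k$-independent constant $C$ since $\nu^b$ runs through the intermediate scales. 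Iterating this $N$ times yields not a clean geometric telescope but a \emph{product} of $D$ at many scales whose exponents sum to $1$ (Lemma \ref{m11iter}), so the final estimate (Corollary \ref{core}) has $D$ on the left and a geometric-mean product of $D$ at coarser scales on the right. One must then close the argument by a bootstrap: first establish $D(\delta) \lsm_\vep \delta^{-\vep}$ by a contradiction on the minimal exponent $\ld$, and then \emph{iterate the implication} $D \leq C_\vep\delta^{-\vep} \Rightarrow D \leq 10^{10^6}\,2^{4\cdot 8^{1/\vep}}C_\vep^{1-\vep/8^{1/\vep}}\delta^{-\vep}$ infinitely many times to pin down $C_\vep \leq 2^{200^{1/\vep}}$ (Lemmas \ref{expstep1}--\ref{expstep3}), and only then optimize in $\vep$. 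Your telescoping $M_0 \leq C^{1/(1-\theta)}M_K^{\theta^K}$ skips this self-improvement step entirely, and if you try to tune $\theta \to 1$ to make $\theta^K \sim 1/\log\log(1/\delta)$ the accumulated constant $C^{1/(1-\theta)}$ blows up past the target.

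Two further structural concerns with your $M_k$. First, you put $J_1$ already at the finest scale $\delta$, leaving nothing to decouple on that factor; the paper's $M_{a,b}$ keeps \emph{both} intervals at intermediate lengths $\nu^a$ and $\nu^b$, which is essential because the Fefferman--Cordoba step (or equivalently $l^2L^2$ decoupling plus ball inflation) operates precisely by refining the $I$-side from $\nu^a$ down to $\nu^{2b}$. Second, you let the transversality $\operatorname{dist}(J_1,J_2) \geq \sigma_k$ shrink with $k$; but ball inflation gains exactly a factor $\nu^{-1}$ because the rectangles dual to $I$ and $I'$ meet at angle $\gtrsim\nu$, and the paper deliberately fixes $3\nu$-separation across the whole iteration for this reason. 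If the separation degrades with $k$ as $\sigma_k \to \delta$, the gain from ball inflation deteriorates correspondingly and the iteration does not close.
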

In the context of discrete Fourier restriction, Theorem \ref{ef2d_main} implies that
for all $N$ sufficiently large and arbitrary sequence $\{a_n\} \subset l^2$, we have
\begin{align*}
\nms{\sum_{|n| \leq N}a_n e^{2\pi i (nx + n^2 t)}}_{L^{6}(\mathbb{T}^2)} \lsm \exp(O(\frac{\log N}{\log\log N}))(\sum_{|n| \leq N}|a_n|^{2})^{1/2}
\end{align*}
which rederives (up to constants) the upper bound obtained by Bourgain in \cite[Proposition 2.36]{bourgain}
but without resorting to use of a divisor bound.
It is an open problem whether the $\exp(O(\frac{\log N}{\log\log N}))$ can be improved.

\subsection{More notation and weight functions}
We define
\begin{align*}
\nms{f}_{L^{p}_{\#}(B)} := (\frac{1}{|B|}\int_{B}|f(x)|^{p}\, dx)^{1/p}, \quad \nms{f}_{L^{p}_{\#}(w_B)} := (\frac{1}{|B|}\int|f|^{p}w_B)^{1/p},
\end{align*}
and given a collection $\mc{C}$ of squares, we let $$\avg{\Delta \in \mc{C}}\,f(\Delta) := \frac{1}{|\mc{C}|}\sum_{\Delta \in \mc{C}}f(\Delta).$$

Finally we will let $\eta$ be a Schwartz function such that
$\eta \geq 1_{B(0, 1)}$ and $\supp(\wh{\eta}) \subset B(0, 1)$. For $B = B(c, R)$ we also define
$\eta_{B}(x) := \eta(\frac{x - c}{R})$. In
Section \ref{fc} we care about explicit constants and so we will use the explicit $\eta$ constructed in Corollary \ref{etaweightwb}.
In particular, for this $\eta$, $\eta_{B} \leq 10^{2400}w_B$.
For the remaining sections in this paper, we will ignore this constant.
The most important facts about $w_B$ we will need are that
$$w_{B(0, R)} \ast w_{B(0, R)} \lsm R^{2}w_{B(0, R)}$$
and
$$1_{B(0, R)} \ast w_{B(0, R)} \gtrsim R^{2}w_{B(0, R)}$$
from which we can derive all the other properties about weights we will use such as given a partition $\{\Delta\}$ of $B$, $\sum_{\Delta}w_{\Delta} \lsm w_B$
and $$\nms{f}_{L^{p}(w_{B(0, R)})}^{p}\lsm \int_{\R^2}\nms{f}_{L^{p}_{\#}(B(y, R))}^{p}w_{B(0, R)}(y)\, dy.$$
We refer the reader to \cite[Section 4]{sg} and \cite[Section 2.2]{thesis} for some useful details and properties of the weights $w_B$ and $\eta_B$.
To keep the paper relatively self contained, we have also included proofs of these estimates in Section \ref{explicitdis} with explicit constants.

\subsection{Outline of proof of Theorem \ref{ef2d_main}}
Our argument is inspired by the discussion of efficient congruencing in
\cite[Section 4]{pierce} which in turn is based off Heath-Brown's simplification \cite{hbwooley} of Wooley's proof of the cubic case
of Vinogradov's mean value theorem \cite{wooleycubic}.

Our first step, much like the first step in both efficient congruencing and decoupling for the parabola, is to bilinearize the problem.
Throughout we will assume $\delta^{-1} \in \N$ and $\nu \in \N^{-1} \cap (0, 1/100)$.

Fix arbitrary integers $a, b \geq 1$.
Suppose $\delta$ and $\nu$ were such that $\nu^{a}\delta^{-1}, \nu^{b}\delta^{-1} \in \N$.
This implies that $\delta \leq \min(\nu^a, \nu^b)$ and the requirement
that $\nu^{\max(a, b)}\delta^{-1}\in \N$ is equivalent to having $\nu^{a}\delta^{-1}, \nu^{b}\delta^{-1} \in \N$.
For this $\delta$ and $\nu$, let $M_{a, b}(\delta, \nu)$ be the best constant such that
\begin{align}\label{mabdef}
\int_{B}|\E_{I}g|^{2}|\E_{I'}g|^{4} \leq M_{a, b}(\delta, \nu)^{6}(\sum_{J \in P_{\delta}(I)}\nms{\E_{J}g}_{L^{6}(w_B)}^{2})(\sum_{J' \in P_{\delta}(I')}\nms{\E_{J'}g}_{L^{6}(w_B)}^{2})^{2}
\end{align}
for all squares $B$ of side length $\delta^{-2}$, $g: [0, 1] \rightarrow \C$, and all
intervals $I \in P_{\nu^{a}}([0, 1])$, $I' \in P_{\nu^{b}}([0, 1])$ with $d(I, I') \geq 3\nu$.
We will say that such $I$ and $I'$ are $3\nu$-separated.
Applying H\"{o}lder's inequality followed by the triangle inequality and the Cauchy-Schwarz inequality shows that $M_{a, b}(\delta, \nu)$ is finite.
This is not the only bilinear decoupling constant we can use (see \eqref{bik_const} and \eqref{mb_bds} in Sections \ref{bik} and \ref{bds}, respectively),
but in this outline we will use \eqref{mabdef} because it is closest to the one used in \cite{pierce} and the one we will use in Section \ref{fc}.

Our proof of Theorem \ref{ef2d_main} is broken into the following four lemmas. We state them below ignoring explicit constants for now.

\begin{lemma}[Parabolic rescaling]\label{parab_outline}
Let $0 < \delta < \sigma < 1$ be such that $\sigma, \delta, \delta/\sigma \in \N^{-1}$. Let $I$ be an arbitrary interval in $[0, 1]$ of length $\sigma$. Then
\begin{align*}
\nms{\E_{I}g}_{L^{6}(B)} \lsm D(\frac{\delta}{\sigma})(\sum_{J \in P_{\delta}(I)}\nms{\E_{J}g}_{L^{6}(w_B)}^{2})^{1/2}
\end{align*}
for every $g: [0, 1] \rightarrow \C$ and every square $B$ of side length $\delta^{-2}$.
\end{lemma}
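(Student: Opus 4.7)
The plan is to prove this by the standard parabolic rescaling argument: apply an affine change of variables that sends $I$ to $[0,1]$ (and scales scale $\delta$ to scale $\delta/\sigma$ on the parameter side), at the cost of distorting the square $B$ into an anisotropic parallelogram in the spatial side, then cover the parallelogram by finitely many squares at the correct new scale and apply the definition of $D(\delta/\sigma)$.

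First I would write $I = [c, c+\sigma]$ and substitute $\xi = c + \sigma\eta$ in the integral defining $\E_I g$. Expanding $\xi x_1 + \xi^2 x_2$ and absorbing the phase $e(cx_1 + c^2 x_2)$ (which has modulus one) yields the pointwise identity
\begin{align*}
|(\E_I g)(x)| = \sigma\,|(\E_{[0,1]}\wt g)(Ax)|, \qquad \wt g(\eta) := g(c+\sigma\eta),
\end{align*}
where $A = \left(\begin{smallmatrix}\sigma & 2c\sigma \\ 0 & \sigma^2\end{smallmatrix}\right)$ has $\det A = \sigma^3$. Changing variables $y = Ax$ turns the left-hand side of the desired estimate into $\sigma^3 \int_{A(B)}|\E_{[0,1]}\wt g|^6\,dy$, and an analogous identity relates $\|\E_J g\|_{L^6(w_B)}$ to $\|\E_{\wt J}\wt g\|_{L^6(\wt w)}$, where $\wt w(y) = w_B(A^{-1}y)$ and $\wt J \in P_{\delta/\sigma}([0,1])$ corresponds to $J \in P_\delta(I)$ under $\xi = c+\sigma\eta$. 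So it suffices to prove the rescaled inequality for $\E_{[0,1]}\wt g$ integrated over $A(B)$.

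Next I would cover the parallelogram $A(B)$. Since $c \in [0,1]$, the image $A(B)$ has $y_2$-extent $\sigma^2\delta^{-2} = (\delta/\sigma)^{-2}$ and $y_1$-extent $\le 3\sigma\delta^{-2}$, so it fits inside a $3\sigma\delta^{-2} \times \sigma^2\delta^{-2}$ rectangle, which can be tiled by $O(\sigma^{-1})$ axis-parallel squares $\wt B_k$ of side $(\delta/\sigma)^{-2}$. Applying the definition of $D(\delta/\sigma)$ on each $\wt B_k$ and summing gives
\begin{align*}
\int_{A(B)}|\E_{[0,1]}\wt g|^6 \le D(\delta/\sigma)^6 \sum_k \Big(\sum_{\wt J \in P_{\delta/\sigma}([0,1])}\|\E_{\wt J}\wt g\|_{L^6(w_{\wt B_k})}^2\Big)^3.
\end{align*}
The Minkowski inequality for $\ell^3$ exchanges the $k$ and $\wt J$ sums, reducing matters to $\sum_k \|\E_{\wt J}\wt g\|_{L^6(w_{\wt B_k})}^6 = \int|\E_{\wt J}\wt g|^6\big(\sum_k w_{\wt B_k}\big)$.

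The main obstacle, and really the only subtle step, is the weight bookkeeping: I need $\sum_k w_{\wt B_k}(y) \lsm \wt w(y) = w_B(A^{-1}y)$ pointwise (up to absolute constants). The essential point is that the squares $\wt B_k$ tile the bounding rectangle with $O(1)$ overlap, so their weight sum is dominated by an anisotropic weight $W$ adapted to that rectangle; pulling back through $A^{-1}$, this rectangle maps into a constant dilate of $B$, and since $w_B$ is a slowly decaying power weight one has $W(Ax) \lsm w_B(x)$. This is the standard ``affine pullback of weights'' computation that can be verified using the two convolution identities for $w_{B(0,R)}$ recalled in the excerpt. Once this is established, reassembling everything and applying the change of variables $y = Ax$ to each $\|\E_{\wt J}\wt g\|_{L^6(\wt w)}$ to return to $\|\E_J g\|_{L^6(w_B)}$ produces exactly the claimed bound.
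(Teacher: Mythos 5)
Your skeleton (affine change of variables, covering the sheared image of $B$ by $O(\sigma^{-1})$ squares of side $(\delta/\sigma)^{-2}$, applying $D(\delta/\sigma)$ on each, Minkowski in $\ell^3$) is exactly the standard route, and it is also the route of the proof the paper cites. But the step you yourself flag as ``the only subtle step'' is where the argument as written breaks: the pointwise claim $\sum_k w_{\wt B_k}(y) \lsm w_B(A^{-1}y)$ with an absolute constant is false. The squares $\wt B_k$ have side $L=\sigma^2\delta^{-2}$ and sit in a row of length $\sim \sigma\delta^{-2}$, i.e.\ $\sim\sigma^{-1}$ of them along a line. Take $y$ at distance $sL$ from the rectangle in the transverse (short) direction, with $1\ll s\lesssim \sigma^{-1}$: about $\min(s,\sigma^{-1})$ of the squares lie within distance $\approx sL$ of $y$, so $\sum_k w_{\wt B_k}(y)\approx \min(1+s,\sigma^{-1})(1+s)^{-100}$, whereas $A^{-1}y$ sits at distance $\approx s\,\delta^{-2}$ from $B$, so $w_B(A^{-1}y)\approx (1+s)^{-100}$. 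The ratio is as large as $\sigma^{-1}$ (attained at $s\sim\sigma^{-1}$), and this is not an artifact of the pointwise formulation: a single wave packet $\E_{\wt J}\wt g$ concentrated at such a location makes the integrated inequality $\sum_k\nms{\E_{\wt J}\wt g}_{L^6(w_{\wt B_k})}^6\lsm\nms{\E_{\wt J}\wt g}_{L^6(\wt w)}^6$ fail by the same factor. So your argument, carried out honestly, proves only $\nms{\E_I g}_{L^6(B)}\lsm \sigma^{-1/6}D(\delta/\sigma)(\sum_J\nms{\E_J g}_{L^6(w_B)}^2)^{1/2}$, which is not the stated lemma (and the loss would be fatal where the lemma is used, e.g.\ in almost multiplicativity and the bootstrap).

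This is precisely the known subtlety the paper alludes to when it cites \cite[Section 2.4]{thesis} ``with a clarification of parabolic rescaling with weight $w_B$'': summing $\sim\sigma^{-1}$ translates of a fixed-exponent power weight along a line inevitably costs one power of decay, and the two convolution facts about $w_{B(0,R)}$ recalled in the introduction do not by themselves recover it. The standard repairs require an additional ingredient you have not supplied: either run the covering argument with higher-exponent weights $w_{\wt B_k,E}$ (whose row-sum is then dominated by the exponent-$100$ pullback weight) and invoke the separately proven comparability of decoupling constants defined with different weight exponents, or use the local reproducing inequality for band-limited functions (the tool behind $\nms{f}_{L^p(w_{B(0,R)})}^p\lsm\int\nms{f}_{L^p_{\#}B(y,R)}^p w_{B(0,R)}(y)\,dy$ and behind \eqref{wlhs}) to pass between weighted and unweighted localizations before summing. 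Without one of these inputs the weight bookkeeping does not close, so the proposal has a genuine gap at exactly the point where the cited proof does real work.
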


\begin{lemma}[Bilinear reduction]\label{bi_outline}
Suppose $\delta$ and $\nu$ were such that $\nu\delta^{-1} \in \N$. Then
$$D(\delta) \lsm D(\frac{\delta}{\nu}) + \nu^{-1}M_{1, 1}(\delta, \nu).$$
\end{lemma}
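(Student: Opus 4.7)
The plan is to decompose $\E_{[0,1]}g = \sum_{I \in P_\nu([0,1])} \E_I g$ and perform a broad--narrow analysis pointwise. I would declare $x \in B$ to be \emph{broad} if there exist $3\nu$-separated intervals $I, I' \in P_\nu([0, 1])$ with $\min(|\E_I g(x)|, |\E_{I'}g(x)|) \geq (\nu/2)|\E_{[0,1]}g(x)|$, and \emph{narrow} otherwise; then split $\int_B |\E_{[0,1]}g|^6$ into these two contributions and treat them separately.

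On narrow points, no two intervals with $|\E_I g(x)|, |\E_{I'}g(x)| \geq (\nu/2)|\E_{[0,1]}g(x)|$ can be $3\nu$-separated, so at most $7$ intervals exceed this threshold; the remaining ones contribute at most $\nu^{-1}\cdot(\nu/2)|\E_{[0,1]}g(x)| = \tfrac{1}{2}|\E_{[0,1]}g(x)|$ to $\sum_I|\E_I g(x)|$. The triangle inequality then yields the pointwise bound $|\E_{[0,1]}g(x)| \leq 14\max_I|\E_I g(x)|$, so $\int_B 1_{\text{narrow}}|\E_{[0,1]}g|^6 \lsm \sum_{I \in P_\nu([0,1])}\int_B|\E_I g|^6$. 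Applying parabolic rescaling (Lemma \ref{parab_outline}) to each summand and collapsing via the pointwise inequality $\sum_I A_I^3 \leq (\sum_I A_I)^3$ for nonnegative $A_I$ gives the narrow bound $D(\delta/\nu)^6 \bigl(\sum_{J \in P_\delta([0,1])} \nms{\E_J g}_{L^6(w_B)}^2\bigr)^3$.

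On broad points there is a $3\nu$-separated pair $(I, I')$ with $|\E_{[0,1]}g(x)|^2 \leq 4\nu^{-2}|\E_I g(x)||\E_{I'}g(x)|$, so AM--GM gives $|\E_{[0,1]}g(x)|^6 \lsm \nu^{-6}\bigl(|\E_I g|^2|\E_{I'}g|^4 + |\E_I g|^4|\E_{I'}g|^2\bigr)$. Dropping the broad indicator, summing over all $3\nu$-separated pairs in $P_\nu([0,1])$, and applying the defining inequality \eqref{mabdef} of $M_{1,1}(\delta, \nu)$, the nested sums collapse via $\sum_I \sum_{J \in P_\delta(I)} c_J = \sum_J c_J$ and $\sum_{I'}\bigl(\sum_{J' \in P_\delta(I')} c_{J'}\bigr)^2 \leq \bigl(\sum_{J'} c_{J'}\bigr)^2$ (writing $c_J := \nms{\E_J g}_{L^6(w_B)}^2$), producing the broad bound $\nu^{-6}M_{1,1}(\delta, \nu)^6 \bigl(\sum_J c_J\bigr)^3$. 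Combining the two contributions and taking sixth roots yields the claim.

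The one delicate point is the calibration of the threshold $\lambda = \nu/2$: making $\lambda$ strictly less than $\nu$ is what lets the tail $\nu^{-1}\lambda|\E_{[0,1]}g|$ in the narrow estimate be absorbed by the left-hand side, while keeping $\lambda \asymp \nu$ (rather than a smaller power) is what pins the broad-side factor at exactly $\nu^{-6}$ at the $L^6$ level, i.e., $\nu^{-1}$ after sixth roots. Everything else is a routine application of AM--GM, the defining inequality \eqref{mabdef}, Lemma \ref{parab_outline}, and standard nesting inequalities for $\ell^p$ norms.
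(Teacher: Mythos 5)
Your proof is correct, but it takes a genuinely different route than the paper's. The paper follows Tao's exposition of the Bourgain--Demeter--Guth bilinear reduction: it first passes to $\nms{\E_{[0,1]}g}_{L^6(B)} \leq \nms{\sum_{i,j}|\E_{I_i}g||\E_{I_j}g|}_{L^3(B)}^{1/2}$, splits the double sum into near-diagonal $(|i-j|\leq 3)$ and off-diagonal $(|i-j|>3)$ pieces, handles the diagonal by the triangle inequality, Cauchy--Schwarz, and parabolic rescaling, and bounds the off-diagonal via the triangle inequality by $\nu^{-2}$ times the maximum over off-diagonal pairs (giving $\nu^{-1}$ after the square root), with a final H\"older step converting $\nms{|\E_{I_i}g||\E_{I_j}g|}_{L^3}$ into the $2$--$4$ geometric mean form of $M_{1,1}$. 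You instead run a pointwise Bourgain--Guth broad-narrow dichotomy: at narrow points at most $O(1)$ caps can exceed the $\nu/2$ threshold, yielding $|\E_{[0,1]}g| \lsm \max_I|\E_I g|$ and hence the $D(\delta/\nu)$ term via rescaling and $\ell^1 \hookrightarrow \ell^3$; at broad points AM--GM manufactures the bilinear integrand with $\nu^{-6}$ coming from the threshold calibration, and the sum over pairs collapses by the nesting inequalities without any extra $\nu$ loss. Both routes land on $\nu^{-1}M_{1,1}(\delta,\nu)$. In fact the paper itself remarks in its outline that in higher dimensions the multilinear reduction is done precisely by the Bourgain--Guth argument, and it adopts the H\"older/squaring shortcut only because the bilinear case allows it; so your approach is the one that generalizes, while the paper's is the quicker 2D-specific one. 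One point to phrase carefully in a polished write-up: the witnessing pair $(I,I')$ at a broad point depends on $x$, so passing from the pointwise bound to the integral requires dominating by the sum over all $3\nu$-separated pairs before dropping the indicator --- which you do implicitly, and which costs nothing because the subsequent $\sum_I\sum_{J\in P_\delta(I)}c_J = \sum_J c_J$ and $\sum_{I'}\bigl(\sum_{J'\in P_\delta(I')}c_{J'}\bigr)^2 \leq \bigl(\sum_{J'}c_{J'}\bigr)^2$ absorb the overcount.
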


\begin{lemma}\label{freq_outline}
Let $a$ and $b$ be integers such that $1 \leq a \leq 2b$.
Suppose $\delta$ and $\nu$ were such that $\nu^{2b}\delta^{-1} \in \N$.
Then
$$M_{a, b}(\delta, \nu) \lsm \nu^{-1/6}M_{2b, b}(\delta, \nu).$$
\end{lemma}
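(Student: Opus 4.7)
\textit{Plan.} The idea is to partition $I$ into subintervals $\wt{I} \in P_{\nu^{2b}}(I)$ (well-defined since $a \leq 2b$ and $\nu^{2b}\delta^{-1} \in \N$), write $\E_I g = \sum_{\wt{I}} \E_{\wt{I}} g$, and establish the approximate orthogonality bound
\begin{align*}
\int_B |\E_I g|^2 |\E_{I'} g|^4 \lsm \nu^{-1} \sum_{\wt{I} \in P_{\nu^{2b}}(I)} \int_B |\E_{\wt{I}} g|^2 |\E_{I'} g|^4.
\end{align*}
Once we have this, applying the definition of $M_{2b, b}(\delta, \nu)$ to each summand (each $\wt{I}$ is still $3\nu$-separated from $I'$) and telescoping $\bigsqcup_{\wt{I}} P_\delta(\wt{I}) = P_\delta(I)$ on the right-hand side gives $M_{a, b}^6 \lsm \nu^{-1} M_{2b, b}^6$, and a sixth root completes the argument.

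To prove the displayed inequality, expand the square as
\begin{align*}
\int_B |\E_I g|^2 |\E_{I'} g|^4 = \sum_{\wt{I}_1, \wt{I}_2} \int_B \E_{\wt{I}_1} g \, \overline{\E_{\wt{I}_2} g} \, |\E_{I'} g|^4
\end{align*}
and show that most cross terms are negligible via Fourier support. Each summand is a six-fold Fourier integral in variables $\xi_i \in \wt{I}_i$, $\eta_j \in I'$ whose phase $e(\phi_1 x_1 + \phi_2 x_2)$ has
\begin{align*}
\phi_1 &= (\xi_1 - \xi_2) + (\eta_1 + \eta_2 - \eta_3 - \eta_4), \\
\phi_2 &= (\xi_1^2 - \xi_2^2) + (\eta_1^2 + \eta_2^2 - \eta_3^2 - \eta_4^2).
\end{align*}
Replacing $1_B$ by a Schwartz majorant comparable to $w_B$ forces non-negligible contributions to come from $|\phi_j| \lsm \delta^2$. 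Parametrizing $\xi_i = c + \mu_i$ and $\eta_j = c' + \rho_j$ with $c, c'$ the centers of $I, I'$, and using $\phi_1 \approx 0$ to eliminate $(\rho_1 + \rho_2) - (\rho_3 + \rho_4)$ in $\phi_2$, the linear-in-$\rho$ terms cancel and one is left with
\begin{align*}
(\mu_1 - \mu_2)\bigl(2(c - c') + (\mu_1 + \mu_2)\bigr) = O(\nu^{2b}).
\end{align*}
Since $d(I, I') \geq 3\nu$ forces $|c - c'| \gtrsim \nu$ while $|\mu_1 + \mu_2| \leq \nu^a \leq \nu$ is harmless (using $a \geq 1$), we conclude $|\mu_1 - \mu_2| \lsm \nu^{2b-1}$. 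Hence only $\lsm \nu^{-1}$ choices of $\wt{I}_2$ per $\wt{I}_1$ survive, and applying the AM-GM bound $|\E_{\wt{I}_1} g \, \overline{\E_{\wt{I}_2} g}| \leq \tfrac{1}{2}(|\E_{\wt{I}_1} g|^2 + |\E_{\wt{I}_2} g|^2)$ together with the symmetry in $(\wt{I}_1, \wt{I}_2)$ delivers the claimed $\nu^{-1}$ factor.

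The main obstacle is making the Fourier-support argument rigorous with $1_B$ rather than a smooth cutoff: one has to dominate $1_B$ by a Schwartz majorant comparable to $w_B$, estimate the rapidly decaying tails from the far off-diagonal pairs, and absorb them into the diagonal contribution. The algebraic heart of the proof is the collapse of the two quadratic constraints $\phi_1, \phi_2 \approx 0$ into the single scale-dependent bound $|\mu_1 - \mu_2| \lsm \nu^{2b}/|c - c'|$, and this is precisely where the $3\nu$-separation hypothesis $d(I, I') \geq 3\nu$ enters essentially.
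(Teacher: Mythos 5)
Your proposal captures the paper's argument essentially verbatim: expand $|\E_I g|^2$ over caps at scale $\nu^{2b}$, use the Fourier support of a Schwartz majorant to force a constraint on the phase offsets, and exploit the $3\nu$-transversality to collapse the resulting bilinear constraint into $|\mu_1-\mu_2|\lsm \nu^{2b-1}$, so only $\lsm \nu^{-1}$ pairs $(\wt{I}_1,\wt{I}_2)$ survive. Your center-of-interval parametrization and the elimination $\phi_2 - 2c'\phi_1$ are a modest streamlining of the paper's explicit affine change of variables $T_\beta$ (which translates $I'$ to $[0,\nu^b]$ and applies a shear before invoking Fourier support), but the underlying algebra and the $\nu^{-1}$ count are identical.

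One place where you overestimate the difficulty: the paper sidesteps the ``absorb the tails'' step entirely by choosing $\eta$ Schwartz with $\wh{\eta}$ supported in $B(0,1)$, so the rescaled majorant $\eta_{100B}$ has inverse Fourier transform supported in $B(0,\delta^2/100)$. The far-off-diagonal integrals then vanish exactly rather than merely decaying rapidly, and the issue you flag as the main obstacle disappears; you should choose your majorant with compactly supported Fourier transform rather than a generic one comparable to $w_B$. A second, smaller gap: after restricting to surviving pairs and applying AM--GM, you are left with terms of the form $\int|\E_{\wt{I}_1}g|^2|\E_{I'}g|^4 w_B$, but the definition \eqref{mabdef} of $M_{2b,b}$ has $1_B$ (not $w_B$) on the left. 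The paper closes this with the weighted-to-unweighted comparison Lemma \ref{wuw}, and your write-up should invoke something analogous before concluding.
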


\begin{lemma}\label{switch_outline}
Suppose $b$ is an integer and $\delta$ and $\nu$ were such that $\nu^{2b}\delta^{-1} \in \N$.
Then
$$M_{2b, b}(\delta, \nu) \lsm M_{b, 2b}(\delta, \nu)^{1/2}D(\frac{\delta}{\nu^b})^{1/2}.$$
\end{lemma}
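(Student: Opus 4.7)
The plan is a one-shot Cauchy--Schwarz maneuver that (a) swaps the roles of the two intervals so that the definition of $M_{b,2b}$ can be invoked, and (b) spawns a sixth power of $|\E_{I'}g|$ that is absorbed by parabolic rescaling.

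Fix $I \in P_{\nu^{2b}}([0,1])$ and $I' \in P_{\nu^b}([0,1])$ with $d(I,I') \geq 3\nu$, and a square $B$ of side length $\delta^{-2}$. Writing $|\E_{I}g|^{2}|\E_{I'}g|^{4} = (|\E_{I}g|^{2}|\E_{I'}g|)\cdot |\E_{I'}g|^{3}$ and applying Cauchy--Schwarz in $x$, one gets
\begin{align*}
\int_{B}|\E_{I}g|^{2}|\E_{I'}g|^{4} \;\leq\; \Bigl(\int_{B}|\E_{I}g|^{4}|\E_{I'}g|^{2}\Bigr)^{1/2}\Bigl(\int_{B}|\E_{I'}g|^{6}\Bigr)^{1/2}.
\end{align*}
In the first factor on the right, the interval of length $\nu^{b}$ (namely $I'$) now carries the exponent $2$ and the interval of length $\nu^{2b}$ (namely $I$) carries the exponent $4$, so the roles are precisely those of the definition \eqref{mabdef} of $M_{b,2b}(\delta,\nu)$. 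Separation is symmetric in $I,I'$, so applying that definition gives
\begin{align*}
\int_{B}|\E_{I}g|^{4}|\E_{I'}g|^{2} \;\leq\; M_{b,2b}(\delta,\nu)^{6}\Bigl(\sum_{J'\in P_{\delta}(I')}\nms{\E_{J'}g}_{L^{6}(w_B)}^{2}\Bigr)\Bigl(\sum_{J\in P_{\delta}(I)}\nms{\E_{J}g}_{L^{6}(w_B)}^{2}\Bigr)^{2}.
\end{align*}

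For the second factor, $I'$ is an interval of length $\nu^{b}$ in $[0,1]$, so Lemma \ref{parab_outline} (parabolic rescaling) with $\sigma=\nu^{b}$ applies provided $\delta/\nu^{b}\in\N^{-1}$; this is guaranteed by the hypothesis $\nu^{2b}\delta^{-1}\in\N$, since $\nu^{b}\delta^{-1}=\nu^{-b}\cdot(\nu^{2b}\delta^{-1})\in\N$. Thus
\begin{align*}
\Bigl(\int_{B}|\E_{I'}g|^{6}\Bigr)^{1/2} \;\lsm\; D\!\left(\tfrac{\delta}{\nu^{b}}\right)^{3}\Bigl(\sum_{J'\in P_{\delta}(I')}\nms{\E_{J'}g}_{L^{6}(w_B)}^{2}\Bigr)^{3/2}.
\end{align*}

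Multiplying the two estimates and collecting the square-functions yields
\begin{align*}
\int_{B}|\E_{I}g|^{2}|\E_{I'}g|^{4} \;\lsm\; \bigl[M_{b,2b}(\delta,\nu)^{1/2}\,D(\tfrac{\delta}{\nu^{b}})^{1/2}\bigr]^{6}\Bigl(\sum_{J\in P_{\delta}(I)}\nms{\E_{J}g}_{L^{6}(w_B)}^{2}\Bigr)\Bigl(\sum_{J'\in P_{\delta}(I')}\nms{\E_{J'}g}_{L^{6}(w_B)}^{2}\Bigr)^{2},
\end{align*}
which is precisely the inequality defining $M_{2b,b}(\delta,\nu) \lsm M_{b,2b}(\delta,\nu)^{1/2}D(\delta/\nu^{b})^{1/2}$. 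There is no real obstacle here: the only conceptual point is to notice that $|\E_{I}|^{2}|\E_{I'}|^{4}$ is asymmetric in precisely the right way for a single Cauchy--Schwarz to exchange the exponents on the two pieces, leaving a pure $|\E_{I'}|^{6}$ residue that parabolic rescaling is tailor-made to handle; the integrality bookkeeping is the only routine check.
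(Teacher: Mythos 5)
Your proof is correct and is essentially the paper's own argument: the paper also writes $\int_B|\E_{I}g|^2|\E_{I'}g|^4 \leq (\int_B|\E_{I}g|^4|\E_{I'}g|^2)^{1/2}(\int_B|\E_{I'}g|^6)^{1/2}$ (phrased as H\"older rather than as your explicit split-plus-Cauchy--Schwarz, but it is the same inequality), then invokes the definition of $M_{b,2b}$ on the first factor and parabolic rescaling on the second, exactly as you do.
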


Applying Lemma \ref{freq_outline}, we can move from
$M_{1, 1}$ to $M_{2, 1}$ and then Lemma \ref{switch_outline} allows us to move from
$M_{2, 1}$ to $M_{1, 2}$ at the cost of a square root of $D(\delta/\nu)$. Applying Lemma \ref{freq_outline} again moves us to $M_{2, 4}$.
Repeating this we can eventually reach $M_{2^{N-1}, 2^{N}}$ paying some $O(1)$ power of $\nu^{-1}$
and the value of the linear decoupling constants at various scales.
This combined with Lemma \ref{bi_outline} and the choice of $\nu = \delta^{1/2^N}$ leads to the following result.
\begin{lemma}
Let $N \in \N$ and suppose $\delta$ was such that $\delta^{-1/2^N} \in \N$ and $0 < \delta < 100^{-2^N}$.
Then
\begin{align*}
D(\delta) \lsm D(\delta^{1 - \frac{1}{2^{N}}}) + \delta^{-\frac{4}{3\cdot 2^{N}}}D(\delta^{1/2})^{\frac{1}{3\cdot 2^{N}}}\prod_{j = 0}^{N-1}D(\delta^{1 - \frac{1}{2^{N - j}}})^{\frac{1}{2^{j + 1}}}.
\end{align*}
\end{lemma}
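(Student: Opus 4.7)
Set $\nu:=\delta^{1/2^N}$. The hypothesis $\delta^{-1/2^N}\in\N$ forces $\nu^a\delta^{-1}=(\delta^{-1/2^N})^{2^N-a}\in\N$ for every integer $0\le a\le 2^N$, so the integrality conditions of Lemmas~\ref{bi_outline}--\ref{switch_outline} all hold throughout the argument, and the smallness $\nu<1/100$ is exactly $\delta<100^{-2^N}$. Lemma~\ref{bi_outline} gives
\[
D(\delta)\lsm D(\delta^{1-1/2^N})+\nu^{-1}M_{1,1}(\delta,\nu),
\]
so the task is to bound $M_{1,1}(\delta,\nu)$ by the second term on the right-hand side of the conclusion.

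\textbf{Telescoping chain.} I would alternate Lemmas~\ref{freq_outline} and \ref{switch_outline} exactly $N$ times to produce
$M_{1,1}\!\to\!M_{2,1}\!\to\!M_{1,2}\!\to\!M_{4,2}\!\to\!M_{2,4}\!\to\cdots\to M_{2^{N-1},2^N}$.
At the $j$th pass ($j=1,\ldots,N$), Lemma~\ref{freq_outline} with $(a,b)=(2^{j-2},2^{j-1})$ (interpreting $2^{-1}:=1$ at $j=1$) turns $M_{2^{j-2},2^{j-1}}$ into $\nu^{-1/6}M_{2^j,2^{j-1}}$, and then Lemma~\ref{switch_outline} with $b=2^{j-1}$ turns this into $\nu^{-1/6}M_{2^{j-1},2^j}^{1/2}D(\delta/\nu^{2^{j-1}})^{1/2}$. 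The hypotheses $1\le a\le 2b$ and $\nu^{2b}\delta^{-1}\in\N$ are easy to check. Since each swap halves the surviving exponent on $M$, composing the $N$ passes yields
\[
M_{1,1}(\delta,\nu)\lsm \nu^{-S}\,M_{2^{N-1},2^N}(\delta,\nu)^{1/2^N}\prod_{j=1}^N D(\delta/\nu^{2^{j-1}})^{1/2^j},
\]
with $S=\sum_{j=1}^N(6\cdot 2^{j-1})^{-1}=\tfrac{1}{3}(1-2^{-N})$. Substituting $\nu=\delta^{1/2^N}$ turns $\delta/\nu^{2^{j-1}}$ into $\delta^{1-1/2^{N-j+1}}$, and reindexing $k=j-1$ matches the product in the conclusion exactly.

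\textbf{Terminal bound and assembly.} The residual $M_{2^{N-1},2^N}$ is estimated directly: since $|I'|=\nu^{2^N}=\delta$, the partition $P_\delta(I')$ is $\{I'\}$ and $1_B\lsm w_B$ gives $\nms{\E_{I'}g}_{L^6(B)}\lsm\nms{\E_{I'}g}_{L^6(w_B)}$. And since $|I|=\nu^{2^{N-1}}=\delta^{1/2}$, Lemma~\ref{parab_outline} (parabolic rescaling, applicable because $\delta<\delta^{1/2}<1$ and $\delta^{1/2}=(\delta^{-1/2^N})^{-2^{N-1}}\in\N^{-1}$) gives $\nms{\E_Ig}_{L^6(B)}\lsm D(\delta^{1/2})(\sum_{J\in P_\delta(I)}\nms{\E_Jg}_{L^6(w_B)}^2)^{1/2}$. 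Feeding these bounds into $\int_B|\E_Ig|^2|\E_{I'}g|^4\le\nms{\E_Ig}_{L^6(B)}^2\nms{\E_{I'}g}_{L^6(B)}^4$ and comparing with \eqref{mabdef} produces $M_{2^{N-1},2^N}(\delta,\nu)\lsm D(\delta^{1/2})^{1/3}$. Assembling all three estimates, the total $\nu$-exponent is $-1-S=-\tfrac{4}{3}+\tfrac{1}{3\cdot 2^N}$, equivalently $\delta^{-4/(3\cdot 2^N)+1/(3\cdot 4^N)}$; since $\delta<1$, the positive $\delta^{1/(3\cdot 4^N)}$ factor is $\le 1$ and can be dropped, which gives the stated bound. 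I expect the only nontrivial obstacle to be the exponent bookkeeping along the telescoping chain; each individual step is a direct invocation of one of the four earlier lemmas.
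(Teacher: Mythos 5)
Your proposal is correct and follows essentially the same route the paper takes (Lemma~\ref{m11iter} and Corollaries~\ref{decrec}--\ref{core}): bilinear reduction via Lemma~\ref{bi_outline}, then $N$ alternations of Lemmas~\ref{freq_outline} and \ref{switch_outline} telescoping $M_{1,1}$ down to $M_{2^{N-1},2^N}^{1/2^N}$, then the terminal bound $M_{2^{N-1},2^N}\lsm D(\delta^{1/2})^{1/3}$, which you rederive directly but which is exactly Lemma~\ref{mabtriv} with $D(\delta/\nu^{2^N})=D(1)=1$. The only cosmetic difference is that you carry the exact exponent $-1-\tfrac13(1-2^{-N})$ on $\nu$ and then discard the harmless $\delta^{1/(3\cdot 4^N)}\le 1$, whereas the paper simply majorizes $\nu^{-\frac13(1-2^{-N})}$ by $\nu^{-1/3}$ in Lemma~\ref{m11iter}; both give the stated bound.
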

This then gives a recursion which shows that $D(\delta) \lsm_{\vep} \delta^{-\vep}$
(see Section \ref{fc_iter} for more details).

The proof of Lemma \ref{parab_outline} is essentially a change of variables and applying
the definition of the linear decoupling constant (some small technical issues arise because of the weight
$w_B$, see \cite[Section 2.4]{thesis}). The idea is that a cap on the paraboloid can be stretched
to the whole paraboloid without changing any geometric properties.
The bilinear reduction Lemma \ref{bi_outline} follows from H\"{o}lder's inequality.
The argument we use is from Tao's exposition on the Bourgain-Demeter-Guth proof of Vinogradov's mean value theorem \cite{tao2d}.
In general dimension, the multilinear reduction follows from a Bourgain-Guth argument (see \cite{bg} and \cite[Section 8]{sg}).
We note that if $a$ and $b$ are so large that $\nu^{a}, \nu^{b} \approx \delta$ then $M_{a, b} \approx O(1)$ and so the goal of the iteration
is to efficiently move from small $a$ and $b$ to very large $a$ and $b$.

Lemma \ref{freq_outline} is the most technical of the four lemmas and is where we use a Fefferman-Cordoba argument in Section \ref{fc}.
We can still close the iteration with Lemma \ref{freq_outline} replaced by $M_{a, b} \lsm M_{b, b}$
for $1 \leq a < b$ and $M_{b, b} \lsm \nu^{-1/6}M_{2b, b}$. Both these estimates come from the same
proof of Lemma \ref{freq_outline} and is how we approach the iteration in Sections \ref{unc} and \ref{bik}
(see Lemmas \ref{mglem2} and \ref{mglem1} and their rigorous counterparts Lemmas \ref{mglem2_rig} and \ref{mglem1_rig}).
The proof of these lemmas is a consequence of $l^2 L^2$ decoupling and ball inflation.
Finally, Lemma \ref{switch_outline} is an application of H\"{o}lder's inequality and parabolic rescaling.

\subsection{Comparison with efficient congruencing as in \cite[Section 4]{pierce}}
The main object of iteration in \cite[Section 4]{pierce} is the following bilinear object
\begin{align*}
&I_{1}(X; a, b)\\
& = \max_{\xi \neq \eta\Mod{p}}\int_{(0, 1]^k}|\sum_{\st{1 \leq x \leq X\\x \equiv \xi \Mod{p^a}}}e(\alpha_1 x + \alpha_2 x^2)|^{2}|\sum_{\st{1 \leq y \leq X\\y \equiv \eta \Mod{p^a}}}e(\alpha_1 y + \alpha_2 y^2)|^{4}\, d\alpha.
\end{align*}
Lemma \ref{parab_outline}-\ref{switch_outline} correspond directly to Lemmas 4.2-4.5 of \cite[Section 4]{pierce}.
The observation that Lemmas 4.2 and 4.3 of \cite{pierce} correspond to parabolic rescaling and bilinear
reduction, respectively was already observed by Pierce in \cite[Section 8]{pierce}.

We think of $p$ as $\nu^{-1}$, $J(X)/X^{3}$ as $D(\delta)$, and $p^{a + 2b}I_{1}(X; a, b)/X^{3}$ as $M_{a, b}(\delta, \nu)^{6}$.
We have the expressions $J(X)/X^3$ and $p^{a + 2b}I_{1}(X; a, b)/X^3$ because heuristically assuming square root cancellation (ignoring $X^{\vep}$ powers) we expect
$J(X) \approx X^3$ and $I_{1}(X; a, b) \approx X^{3}/p^{a + 2b}$.
This heuristic explains why $$I_{1}(X; a, b) \leq p^{2b - a}I_{1}(X; 2b, b)$$ from \cite[Lemma 4.4]{pierce}
becomes (essentially, after ignoring the $\nu^{-1} \approx \delta^{-\vep}$) $$M_{a, b}(\delta, \nu)^{6} \lsm M_{2b, b}(\delta, \nu)^{6}.$$

In the definition of $I_1$, the $\max_{\xi \neq \eta\Mod{p}}$ condition
can be thought of as corresponding to the transversality condition
that $I_1$ and $I_2$ are $\nu$-separated intervals
of length $\nu$. The integral over $(0, 1]^2$ corresponds to an integral over $B$.
Finally the expression $$|\sum_{\st{1 \leq x \leq X\\x \equiv \xi \Mod{p^a}}}e(\alpha_1 x + \alpha_2 x^2)|,$$
can be thought of as corresponding to $|\E_{I}g|$ for
$I$ an interval of length $\nu^a$ and so the whole of $I_{1}(X; a, b)$ can be thought of as
$\int_{B}|\E_{I_1}g|^{2}|\E_{I_2}g|^{4}$ where $\ell(I_1) = \nu^a$ and $\ell(I_2) = \nu^b$ with $I_1$ and $I_2$ are $O(\nu)$-separated.
This will be our interpretation in Section \ref{fc}.

Interpreting the proof of Lemma \ref{freq_outline} using the uncertainty principle, we
reinterpret $I_{1}(X; a, b)$ as (ignoring weight functions)
\begin{align}\label{iab_interpret}
\avg{\Delta \in P_{\nu^{-\max(a, b)}}(B)}\nms{\E_{I}g}_{L^{2}_{\#}(\Delta)}^{2}\nms{\E_{I'}g}_{L^{4}_{\#}(\Delta)}^{4}
\end{align}
where $I$ and $I'$ are length $\nu^{a}$ and $\nu^{b}$, respectively and are $\nu$-separated.
The uncertainty principle says that \eqref{iab_interpret} is essentially equal to
$\frac{1}{|B|}\int_{B}|\E_{I}g|^{2}|\E_{I'}g|^{4}$.

Finally in Section \ref{bds} we replace \eqref{iab_interpret} with
\begin{align*}
\avg{\Delta \in P_{\nu^{-b}}(B)}(\sum_{J \in P_{\nu^{b}}(I)}\nms{\E_{J}g}_{L^{2}_{\#}(\Delta)}^{2})(\sum_{J' \in P_{\nu^{b}}(I')}\nms{\E_{J'}g}_{L^{2}_{\#}(\Delta)}^{2})^{2}
\end{align*}
where $I$ and $I'$ are length $\nu$ and $\nu$-separated. Note that when $b = 1$ this then is exactly equal to
$\frac{1}{|B|}\int_{B}|\E_{I}g|^{2}|\E_{I'}g|^{4}$. The interpretation given above is now
similar to the $A_p$ object studied by Bourgain-Demeter in \cite{sg}.

\subsection{Overview}
Theorem \ref{ef2d_main} will be proved in Section \ref{fc} via a Fefferman-Cordoba argument. This argument
does not generalize to proving that $D_{p}(\delta) \lsm_{\vep} \delta^{-\vep}$ except for $p = 4, 6$.
However in Section \ref{unc}, by the uncertainty principle we reinterpret a key lemma from Section 2 (Lemma \ref{abup}) which
allows us to generalize the argument in Section \ref{fc} so that it can work for all $2 \leq p \leq 6$.
We make this completely rigorous in Section \ref{bik} by defining a slightly different (but morally equivalent)
bilinear decoupling constant. A basic version of the ball inflation inequality similar to that used in \cite[Theorem 9.2]{sg}
and \cite[Theorem 6.6]{bdg} makes an appearance.
Finally in Section \ref{bds}, we reinterpret the argument made in Section \ref{bik} and write an argument
that is more like that given in \cite{sg}. We create a 1-parameter family of bilinear constants which
in some sense ``interpolate" between the Bourgain-Demeter argument and our argument here.

The three arguments in Sections \ref{fc}-\ref{bds} are similar but will use slightly different
bilinear decoupling constants.
We will only mention explicit constants in Section \ref{fc}.
In Sections \ref{bik} and \ref{bds}, for simplicity, we will only prove that $D(\delta) \lsm_{\vep} \delta^{-\vep}$.
Because the structure of the iteration in Sections \ref{bik} and \ref{bds}
is the same as that in Section \ref{fc}, we obtain essentially the same quantitative bounds as in Theorem \ref{ef2d_main} when making explicit
the bounds in Sections \ref{bik} and \ref{bds}.

Finally, in Section \ref{explicitdis}, we include some discussion on the explicit constants for various estimates that we need for the proof of Theorem \ref{ef2d_main}.

\subsubsection*{Acknowledgements}
The author would like to thank Ciprian Demeter, Larry Guth, and his advisor Terence Tao for encouragement and many discussions
on decoupling. The author would also like to thank Kevin Hughes and Trevor Wooley for a fruitful discussion
on efficient congruencing at the \emph{Harmonic Analysis and Related Areas} conference held by the Clay Math Institute at the University of Oxford in
September 2017. The author is partially supported by NSF grants DGE-1144087 and DMS-1266164.
Finally the author would like to thank the referee for detailed comments and suggestions.

\section{Proof of Theorem \ref{ef2d_main}}\label{fc}
We recall the definition of the bilinear decoupling constant $M_{a, b}$ as in \eqref{mabdef}.
The arguments in this section will rely strongly on the fact that the exponents in the definition of $M_{a, b}$ are 2 and 4, though we
will only essentially use this in Lemma \ref{abup}.

Given two expressions $x_1$ and $x_2$, let
$$\geom_{2, 4} x_i := x_{1}^{2/6}x_{2}^{4/6}.$$
H\"{o}lder's inequality gives $\nms{\geom_{2, 4} x_i}_{p} \leq \geom_{2, 4}\nms{x_i}_{p}$.

\subsection{Parabolic rescaling and consequences}
The linear decoupling constant $D(\delta)$ obeys the following important property.
\begin{lemma}[Parabolic rescaling]\label{parab_res}
Let $0 < \delta < \sigma < 1$ be such that $\sigma, \delta, \delta/\sigma \in \N^{-1}$. Let $I$ be an arbitrary interval in $[0, 1]$ of length $\sigma$. Then
\begin{align*}
\nms{\E_{I}g}_{L^{6}(B)} \leq 10^{16000} D(\frac{\delta}{\sigma})(\sum_{J \in P_{\delta}(I)}\nms{\E_{J}g}_{L^{6}(w_B)}^{2})^{1/2}
\end{align*}
for every $g: [0, 1] \rightarrow \C$ and every square $B$ of side length $\delta^{-2}$.
\end{lemma}
\begin{proof}
The proof without explicit constants is standard, see \cite[Proposition 7.1]{sg} or \cite[Lemma 3.2]{2ddec}.
The derivation of the constant $10^{16000}$ is given later in Section \ref{parab_res_const_sec} (and a similar proof can also be found in \cite[Section 2.4]{thesis},
see also that section for a minor clarification of parabolic rescaling with weight $w_B$ and the argument in \cite[Proposition 7.1]{sg}).
\end{proof}

As an immediate application of parabolic rescaling we have almost multiplicativity of the decoupling constant.
\begin{lemma}[Almost multiplicativity]\label{almostmult}
Let $0 < \delta < \sigma < 1$ be such that $\sigma, \delta, \delta/\sigma \in \N^{-1}$, then
$$D(\delta) \leq 10^{20000}D(\sigma)D(\delta/\sigma).$$
\end{lemma}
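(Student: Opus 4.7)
The plan is to apply parabolic rescaling once, sandwiched between a coarse-scale decoupling step that uses the definition of $D(\sigma)$. Start with the left-hand side $\nms{\E_{[0,1]}g}_{L^6(B)}$ where $B$ has side length $\delta^{-2}$. Since $\sigma^{-2} \leq \delta^{-2}$, partition $B$ into subsquares $B' \in P_{\sigma^{-2}}(B)$, on each of which the definition \eqref{decdef} applies at scale $\sigma$. This gives, for each $B'$,
\begin{align*}
\nms{\E_{[0,1]}g}_{L^{6}(B')} \leq D(\sigma)\bigl(\sum_{I \in P_{\sigma}([0,1])} \nms{\E_{I}g}_{L^{6}(w_{B'})}^{2}\bigr)^{1/2}.
\end{align*}

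Next I would raise to the sixth power, sum over $B'$, and apply Minkowski's inequality in the form $\|\cdot\|_{\ell^6_{B'}\ell^2_I} \leq \|\cdot\|_{\ell^2_I \ell^6_{B'}}$, which is valid since $6 \geq 2$. This converts the outer sum over $B'$ into a sum inside the square root. Then, using the weight property $\sum_{B' \in P_{\sigma^{-2}}(B)} w_{B'} \lsm w_B$ from the paper's introduction, the pieces $\nms{\E_I g}_{L^6(w_{B'})}^6$ sum to at most a constant times $\nms{\E_I g}_{L^6(w_B)}^6$. After cleanup, this yields
\begin{align*}
\nms{\E_{[0,1]}g}_{L^{6}(B)} \lsm D(\sigma)\bigl(\sum_{I \in P_{\sigma}([0,1])} \nms{\E_{I}g}_{L^{6}(w_{B})}^{2}\bigr)^{1/2}.
\end{align*}

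Now for each $I \in P_{\sigma}([0,1])$, $I$ is an interval of length $\sigma$ and $B$ has side length $\delta^{-2}$, so I can invoke parabolic rescaling (the preceding lemma with its explicit constant $10^{20000}$) to further decouple into caps of length $\delta$, picking up a factor of $D(\delta/\sigma)$. After squaring and summing the $\ell^2$-norms (noting that $\bigsqcup_I P_{\delta}(I) = P_{\delta}([0,1])$), this gives
\begin{align*}
\nms{\E_{[0,1]}g}_{L^{6}(B)} \leq 10^{20000}\, D(\sigma)\, D(\delta/\sigma)\bigl(\sum_{J \in P_{\delta}([0,1])} \nms{\E_{J}g}_{L^{6}(w_{B})}^{2}\bigr)^{1/2},
\end{align*}
whence $D(\delta) \leq 10^{20000} D(\sigma) D(\delta/\sigma)$ by taking the infimum.

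The only real technical points are the Minkowski exchange and the bookkeeping needed to absorb the weight constants (factors arising from $\sum_{B'} w_{B'} \lsm w_B$ and the passage from $1_B$ to $w_B$) into the leading constant without inflating it beyond $10^{20000}$. This is where one uses the fact that the parabolic rescaling lemma is stated directly in the weighted formulation, so only one application of an $L^6(w_B)$-to-$L^6(w_B)$ rescaling is actually needed, and the coarse-scale decoupling step costs only weight-manipulation constants that are absorbed into the $\lsm$. Managing these absorbed constants is the main (but minor) obstacle; the rest is a clean two-step application of the definition of $D(\sigma)$ and the rescaling lemma.
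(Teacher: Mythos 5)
Your proposal follows the standard argument for almost multiplicativity — tile $B$ by $\sigma^{-2}$-cubes, decouple each into $\sigma$-caps via the definition of $D(\sigma)$, push the outer $\ell^6$ sum over cubes inside by Minkowski, absorb the weights via $\sum_{B'} w_{B'} \lsm w_B$, then parabolically rescale each $\sigma$-cap into $\delta$-caps on the full $B$ — and this is the same route the cited reference \cite[Proposition 2.4.1]{thesis} takes. Two caveats are worth flagging. First, the parabolic rescaling lemma in this paper has the unweighted $L^6(B)$ on its left side, not $L^6(w_B)$, so your remark that it is ``stated directly in the weighted formulation'' is inaccurate; after your coarse-scale step you hold $\nms{\E_I g}_{L^6(w_B)}$, and to rescale that you need the weighted variant of parabolic rescaling, which is not automatic but follows by the same argument as \eqref{wlhs} (or directly from \cite[Proposition 2.2.11]{thesis}). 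This costs another weight constant. Second, your claim that the accumulated weight/Minkowski constants can be ``absorbed into the leading constant without inflating it beyond $10^{20000}$'' is not an identity — it relies on $10^{20000}$ being a deliberately generous over-estimate of the true parabolic rescaling constant so that there is slack to spare; stated as you did, it reads as if no accounting is needed. Both points are bookkeeping rather than structural, and the overall strategy is the right one.
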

\begin{proof}
From the definition of $D(\sigma)$, we have
\begin{align*}
\nms{\E_{[0, 1]}g}_{L^{6}(B)} \leq D(\sigma)(\sum_{J \in P_{\sigma}([0, 1])}\nms{\E_{J}g}_{L^{6}(w_B)}^{2})^{1/2}.
\end{align*}
For each $J \in P_{\sigma}([0, 1])$, combining Lemma \ref{parab_res} with Corollary \ref{up} gives
\begin{align*}
\nms{\E_{J}g}_{L^{6}(w_B)} \leq 8^{100/6}10^{16000}D(\frac{\delta}{\sigma})(\sum_{J' \in P_{\delta}(J)}\nms{\E_{J'}g}_{L^{6}(w_B)}^{2})^{1/2}.
\end{align*}
\end{proof}

The trivial bound of $O(\nu^{(a + 2b)/6}\delta^{-1/2})$ for $M_{a, b}(\delta, \nu)$
is too weak for applications. We instead give another trivial bound that
follows from parabolic rescaling.
\begin{lemma}\label{mabtriv}
If $\delta$ and $\nu$ were such that $\nu^{a}\delta^{-1}, \nu^{b}\delta^{-1} \in \N$, then
$$M_{a, b}(\delta, \nu) \leq 10^{20000}D(\frac{\delta}{\nu^a})^{1/3}D(\frac{\delta}{\nu^b})^{2/3}.$$
\end{lemma}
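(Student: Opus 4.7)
The plan is to bound $\int_B |\E_I g|^2|\E_{I'}g|^4$ by Hölder's inequality and then apply parabolic rescaling to each of the two resulting factors separately, since $I$ has length $\nu^a$ and $I'$ has length $\nu^b$.

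First I would use Hölder's inequality (with exponents $3$ and $3/2$, or equivalently viewed as a pointwise geometric-mean / Cauchy--Schwarz argument) to obtain
\begin{align*}
\int_B |\E_I g|^2|\E_{I'}g|^4 \leq \nms{\E_I g}_{L^6(B)}^{2}\,\nms{\E_{I'}g}_{L^6(B)}^{4}.
\end{align*}
This step is the whole content of the bilinear-to-linear reduction at the $L^6$ level; no Fefferman--Cordoba or transversality is being exploited, which is exactly why we only get a ``trivial'' bound.

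Next I would apply parabolic rescaling (the previous lemma) to each factor. Since $I \in P_{\nu^a}([0,1])$ has length $\sigma = \nu^a$, parabolic rescaling yields
\begin{align*}
\nms{\E_I g}_{L^6(B)} \leq 10^{20000}D(\delta/\nu^a)\Bigl(\sum_{J \in P_\delta(I)}\nms{\E_J g}_{L^6(w_B)}^2\Bigr)^{1/2},
\end{align*}
and similarly for $\E_{I'}g$ with $\nu^a$ replaced by $\nu^b$. The hypotheses $\nu^a\delta^{-1}, \nu^b\delta^{-1}\in\N$ are precisely what is needed so that parabolic rescaling applies at scales $\delta/\nu^a$ and $\delta/\nu^b$ respectively.

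Combining these, raising to the appropriate powers ($2$ and $4$), and multiplying, the constant $10^{20000}$ accumulates to $10^{20000 \cdot 6} = 10^{120000}$ while the decoupling factors become $D(\delta/\nu^a)^2 D(\delta/\nu^b)^4$. Taking sixth roots (comparing with the definition of $M_{a,b}^6$) gives the stated bound with constant $10^{20000}$ and powers $1/3$ and $2/3$. There is really no obstacle here; the lemma is an immediate consequence of Hölder and parabolic rescaling, and its role is simply to replace the pointwise trivial bound $M_{a,b}\lesssim \nu^{(a+2b)/6}\delta^{-1/2}$ by one in which the scales $\nu^a$ and $\nu^b$ have already been ``absorbed'' into linear decoupling constants at the rescaled scales.
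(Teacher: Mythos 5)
Your proposal is correct and matches the paper's proof: Hölder to split $\int_B |\E_I g|^2|\E_{I'}g|^4 \leq \nms{\E_I g}_{L^6(B)}^2\nms{\E_{I'}g}_{L^6(B)}^4$, parabolic rescaling on each factor at scales $\delta/\nu^a$ and $\delta/\nu^b$, and a sixth root to collect the constant and exponents.
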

\begin{proof}
Fix arbitrary $I_1 \in P_{\nu^{a}}([0, 1])$ and $I_2 \in P_{\nu^{b}}([0, 1])$
which are $3\nu$-separated. H\"{o}lder's inequality gives that
\begin{align*}
\nms{\geom_{2, 4}|\E_{I_i}g|}_{L^{6}(B)}^{6} \leq \nms{\E_{I_1}g}_{L^{6}(B)}^{2}\nms{\E_{I_2}g}_{L^{6}(B)}^{4}.
\end{align*}
Parabolic rescaling bounds this by
\begin{align*}
10^{120000}D(\frac{\delta}{\nu^a})^{2}D(\frac{\delta}{\nu^b})^{4}(\sum_{J \in P_{\delta}(I_1)}\nms{\E_{J}g}_{L^{6}(w_B)}^{2})(\sum_{J' \in P_{\delta}(I_2)}\nms{\E_{J'}g}_{L^{6}(w_B)}^{2})^{2}.
\end{align*}
Taking sixth roots then completes the proof of Lemma \ref{mabtriv}.
\end{proof}

H\"{o}lder's inequality and parabolic rescaling allows us to interchange the $a$ and $b$ in $M_{a, b}$.
\begin{lemma}\label{switch}
Suppose $b \geq 1$ and $\delta$ and $\nu$ were such that $\nu^{2b}\delta^{-1} \in \N$.
Then
\begin{align*}
M_{2b, b}(\delta, \nu) \leq 10^{10000}M_{b, 2b}(\delta, \nu)^{1/2}D(\delta/\nu^b)^{1/2}.
\end{align*}
\end{lemma}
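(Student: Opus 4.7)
The plan is to use the Cauchy--Schwarz inequality on the integrand, which splits $|\E_I g|^2|\E_{I'}g|^4$ into one piece matching the $M_{b,2b}$ definition (after swapping the roles of the intervals) and a second piece that is a pure sixth power, amenable to parabolic rescaling.

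Concretely, fix arbitrary $3\nu$-separated intervals $I \in P_{\nu^{2b}}([0,1])$ and $I' \in P_{\nu^b}([0,1])$ and factor
\begin{align*}
|\E_I g|^2|\E_{I'}g|^4 = \bigl(|\E_I g|^2|\E_{I'}g|\bigr)\cdot |\E_{I'}g|^3.
\end{align*}
Cauchy--Schwarz on $\int_B$ then gives
\begin{align*}
\int_B |\E_I g|^2|\E_{I'}g|^4 \leq \Bigl(\int_B |\E_I g|^4 |\E_{I'}g|^2\Bigr)^{1/2}\Bigl(\int_B |\E_{I'}g|^6\Bigr)^{1/2}.
\end{align*}
In the first factor, the lengths of the intervals carrying exponents $2$ and $4$ have swapped: the $L^2$ factor is now on $I'$ (length $\nu^b$) and the $L^4$ factor on $I$ (length $\nu^{2b}$). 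Since $I$ and $I'$ are still $3\nu$-separated, this integral is controlled by $M_{b,2b}(\delta,\nu)^6$ times the product
$$\Bigl(\sum_{J'\in P_\delta(I')}\nms{\E_{J'}g}_{L^6(w_B)}^2\Bigr)\Bigl(\sum_{J\in P_\delta(I)}\nms{\E_J g}_{L^6(w_B)}^2\Bigr)^2.$$

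For the second factor I would dominate $1_B$ by (a constant times) $w_B$ to pass from $\int_B$ to $\nms{\E_{I'}g}_{L^6(w_B)}^6$ and then apply the weighted version of parabolic rescaling at scale $\nu^b$, which yields a bound by $(10^{20000}D(\delta/\nu^b))^6 \bigl(\sum_{J'\in P_\delta(I')}\nms{\E_{J'}g}_{L^6(w_B)}^2\bigr)^3$. Multiplying the two square roots and collecting terms, the powers of $\sum_{J'}\nms{\E_{J'}g}_{L^6(w_B)}^2$ combine to $(1/2)+(3/2) = 2$ while $\sum_J\nms{\E_J g}_{L^6(w_B)}^2$ appears with total exponent $1$, exactly matching the $M_{2b,b}$ side of the target inequality.

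Taking sixth roots then gives the claimed bound with constant $(10^{20000})^{1/2}=10^{10000}$. The steps are all routine: Cauchy--Schwarz, relabeling intervals to match the $M_{b,2b}$ template, and parabolic rescaling. The only mild technicality is the passage from $L^6(B)$ to $L^6(w_B)$ before invoking parabolic rescaling, which follows from $1_B \lesssim w_B$ and the standard weighted form of parabolic rescaling referenced earlier in the paper.
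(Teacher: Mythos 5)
Your proof is correct and follows the same route as the paper: Cauchy--Schwarz applied to the factorization $|\E_I g|^2|\E_{I'}g|^4 = (|\E_I g|^2|\E_{I'}g|)\cdot|\E_{I'}g|^3$, identify the first factor with the $M_{b,2b}$ template after swapping roles, and treat the pure sixth power with parabolic rescaling. The only minor quibble is that the detour through $L^6(w_B)$ for the second factor is unnecessary, since the paper's parabolic rescaling lemma already has $\nms{\E_{I'}g}_{L^6(B)}$ on its left-hand side, so $\int_B|\E_{I'}g|^6$ can be handled directly without introducing (and then having to track) the extra constant from $1_B\lesssim w_B$.
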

\begin{proof}
Fix arbitrary $I_1$ and $I_2$ intervals of lengths $\nu^{2b}$ and $\nu^{b}$, respectively which
are $3\nu$-separated. H\"{o}lder's inequality then gives
\begin{align*}
\nms{|\E_{I_1}g|^{1/3}|\E_{I_2}g|^{2/3}}_{L^{6}(B)}^{6} \leq (\int_{B}|\E_{I_1}g|^{4}|\E_{I_2}g|^{2})^{1/2}(\int_{B}|\E_{I_2}g|^{6})^{1/2}.
\end{align*}
Applying the definition of $M_{b, 2b}$ and parabolic rescaling bounds the above by
\begin{align*}
(10^{20000})^{3}M_{b, 2b}(\delta, \nu)^{3}D(\frac{\delta}{\nu^b})^{3}(\sum_{J \in P_{\delta}(I_1)}\nms{\E_{J}g}_{L^{6}(w_B)}^{2})(\sum_{J' \in P_{\delta}(I_2)}\nms{\E_{J'}g}_{L^{6}(w_B)}^{2})^{2}
\end{align*}
which completes the proof of Lemma \ref{switch}.
\end{proof}

\begin{lemma}[Bilinear reduction]\label{biv1}
Suppose $\delta$ and $\nu$ were such that $\nu\delta^{-1} \in \N$. Then
$$D(\delta) \leq 10^{30000}(D(\frac{\delta}{\nu}) + \nu^{-1}M_{1, 1}(\delta, \nu)).$$
\end{lemma}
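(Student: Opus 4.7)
The plan is a Bourgain-Guth style pointwise dichotomy, integrated and combined with parabolic rescaling. First I would partition $[0,1]$ into $\nu^{-1}$ intervals of length $\nu$, writing $\E_{[0,1]}g = \sum_{I \in P_\nu([0,1])} \E_I g$. For each $x \in B$, let $I^*(x) \in P_\nu([0,1])$ be an interval maximizing $|\E_I g(x)|$. Partitioning the remaining intervals according to whether they are $3\nu$-separated from $I^*(x)$, I would observe that at most $O(1)$ intervals fail to be $3\nu$-separated from $I^*(x)$ (each contributing at most $|\E_{I^*(x)}g(x)|$), while for any $I$ that is $3\nu$-separated from $I^*(x)$ the maximality $|\E_I g(x)| \leq |\E_{I^*(x)}g(x)|$ yields
\begin{align*}
|\E_I g(x)| \leq (|\E_{I^*(x)}g(x)|^2 |\E_I g(x)|^4)^{1/6} \leq \max_{\substack{J,J' \in P_\nu([0,1]) \\ d(J,J') \geq 3\nu}} (|\E_J g(x)|^2|\E_{J'}g(x)|^4)^{1/6}.
\end{align*}
Summing the triangle inequality over $I$ and taking sixth powers would give the pointwise dichotomy
\begin{align*}
|\E_{[0,1]}g(x)|^6 \lsm |\E_{I^*(x)}g(x)|^6 + \nu^{-6}\max_{\substack{I, I' \in P_\nu([0,1]) \\ d(I,I') \geq 3\nu}} |\E_I g(x)|^2 |\E_{I'}g(x)|^4.
\end{align*}

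Next I would bound the first term pointwise by $\sum_{I \in P_\nu([0,1])}|\E_I g(x)|^6$ and the maximum in the second term by the full sum over $3\nu$-separated pairs, then integrate over $B$. For the linear piece, parabolic rescaling at scale $\sigma = \nu$ controls each $\nms{\E_I g}_{L^6(B)}^6$ by $10^{120000}D(\delta/\nu)^6 (\sum_{J \in P_\delta(I)}\nms{\E_J g}_{L^6(w_B)}^2)^3$, and the elementary inequality $\sum_I a_I^3 \leq (\sum_I a_I)^3$ for nonnegative $a_I$ collapses the sum over $I \in P_\nu([0,1])$ to $(\sum_{J \in P_\delta([0,1])}\nms{\E_J g}_{L^6(w_B)}^2)^3$. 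For the bilinear piece, the definition of $M_{1,1}(\delta, \nu)$ bounds each integral, and setting $F(I) := \sum_{J \in P_\delta(I)}\nms{\E_J g}_{L^6(w_B)}^2$ the resulting double sum factors as $\sum_{I,I'} F(I)F(I')^2 = (\sum_I F(I))(\sum_{I'}F(I')^2) \leq (\sum_I F(I))^3$, again yielding $(\sum_{J \in P_\delta([0,1])}\nms{\E_J g}_{L^6(w_B)}^2)^3$.

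Combining these bounds and extracting sixth roots via $(a^6 + b^6)^{1/6} \leq a + b$ gives
\begin{align*}
\nms{\E_{[0,1]}g}_{L^6(B)} \lsm (D(\delta/\nu) + \nu^{-1}M_{1,1}(\delta, \nu))\left(\sum_{J \in P_\delta([0,1])}\nms{\E_J g}_{L^6(w_B)}^2\right)^{1/2},
\end{align*}
and taking the supremum over $g$ and $B$ would give the stated inequality. Conceptually the argument is routine; the main obstacle is bookkeeping to confirm that the accumulated constants fit under $10^{30000}$. Parabolic rescaling contributes $10^{20000}$ after the sixth root, the pointwise dichotomy contributes a harmless combinatorial factor (bounded by the $O(1)$ cluster count raised to the sixth power plus a factor from the triangle inequality), and the sixth-root step $(a^6+b^6)^{1/6} \leq a + b$ costs nothing. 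I would track these factors carefully to verify the final constant lies comfortably below $10^{30000}$.
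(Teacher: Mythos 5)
Your proof is correct, and it takes a genuinely different route from the paper's. The paper follows Tao's exposition of Bourgain--Demeter--Guth: it squares the full sum, i.e.\ uses
$\nms{\sum_i \E_{I_i}g}_{L^6(B)} \leq \nms{\sum_{i,j}|\E_{I_i}g||\E_{I_j}g|}_{L^3(B)}^{1/2}$,
splits the double index set into a diagonal part $|i-j|\leq 3$ (handled by the triangle inequality, Cauchy--Schwarz, and parabolic rescaling) and an off-diagonal part $|i-j|>3$ (bounded by $\nu^{-1}$ times a max, then converted via H\"older into the $2$--$4$ split that matches the $M_{1,1}$ definition). Your argument is instead the Bourgain--Guth pointwise dichotomy: pick the maximizing cap $I^*(x)$ for each $x$, use maximality to convert far caps into the bilinear expression $|\E_{I^*(x)}g|^2|\E_I g|^4$, and only then integrate and invoke parabolic rescaling and $M_{1,1}$. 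The paper itself flags this as the standard alternative ("In general dimension, the multilinear reduction follows from a Bourgain--Guth argument"); its squaring trick is tied to the bilinear setting, whereas your pointwise maximum argument is the one that generalizes to multilinear reductions in higher dimensions. Your constant tracking is also fine: the near-caps contribute a factor of roughly $(2^5\cdot 7^6)^{1/6}\cdot 10^{20000}$ after the sixth root, which sits comfortably under $10^{30000}$, and the elementary inequalities $\sum a_I^3 \leq (\sum a_I)^3$ and $\sum F(I)F(I')^2 \leq (\sum F)^3$ for nonnegative terms close both pieces cleanly.
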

\begin{proof}
Let $\{I_i\}_{i = 1}^{\nu^{-1}} = P_{\nu}([0, 1])$.
We have
\begin{align}\label{ef2d_bieq1}
\nms{\E_{[0, 1]} g}_{L^{6}(B)} &= \nms{\sum_{1 \leq i \leq \nu^{-1}}\E_{I_i}g}_{L^{6}(B)} \leq \nms{\sum_{1 \leq i, j \leq \nu^{-1}}|\E_{I_i}g||\E_{I_j}g|}_{L^{3}(B)}^{1/2}\nonumber\\
&\leq \sqrt{2}\bigg( \nms{\sum_{\st{1 \leq i, j \leq \nu^{-1}\\|i - j| \leq 3}}|\E_{I_i}g||\E_{I_j}g|}_{L^{3}(B)}^{1/2} + \nms{\sum_{\st{1 \leq i, j \leq \nu^{-1}\\|i - j| > 3}}|\E_{I_i}g||\E_{I_j}g|}_{L^{3}(B)}^{1/2}\bigg).
\end{align}
We first consider the diagonal terms. The triangle inequality followed by
the Cauchy-Schwarz inequality gives that
\begin{align*}
\nms{\sum_{\st{1 \leq i, j \leq \nu^{-1}\\|i - j| \leq 3}}|\E_{I_i}g||\E_{I_j}g|}_{L^{3}(B)} \leq \sum_{\st{1 \leq i, j \leq \nu^{-1}\\|i - j| \leq 3}}\nms{\E_{I_i}g}_{L^{6}(B)}\nms{\E_{I_j}g}_{L^{6}(B)}.
\end{align*}
Parabolic rescaling and the Cauchy-Schwarz inequality bounds this by
\begin{align*}
10^{40000}&D(\frac{\delta}{\nu})^{2}\sum_{\st{1 \leq i, j \leq \nu^{-1}\\|i - j| \leq 3}}(\sum_{J \in P_{\delta}(I_i)}\nms{\E_{J}g}_{L^{6}(w_B)}^{2})^{1/2}(\sum_{J \in P_{\delta}(I_j)}\nms{\E_{J}g}_{L^{6}(w_B)}^{2})^{1/2}\\
&\leq 10^{40010} D(\frac{\delta}{\nu})^{2}\sum_{J \in P_{\delta}([0, 1])}\nms{\E_{J}g}_{L^{6}(w_B)}^{2}.
\end{align*}
Therefore the first term in \eqref{ef2d_bieq1} is bounded above by
\begin{align}\label{diagbd}
10^{30000}D(\frac{\delta}{\nu})(\sum_{J \in P_{\delta}([0, 1])}\nms{\E_{J}g}_{L^{6}(w_B)}^{2})^{1/2}.
\end{align}

Next we consider the off-diagonal terms. We have
\begin{align*}
\nms{\sum_{\st{1 \leq i, j \leq \nu^{-1}\\|i - j| > 3}}|\E_{I_i}g||\E_{I_j}g|}_{L^{3}(B)}^{1/2} \leq \nu^{-1}\max_{\st{1 \leq i, j \leq \nu^{-1}\\|i - j| > 3}}\nms{|\E_{I_i}g||\E_{I_j}g|}_{L^{3}(B)}^{1/2}
\end{align*}
H\"{o}lder's inequality gives that
\begin{align}\label{holder_bi}
\nms{|\E_{I_i}g||\E_{I_j}g|}_{L^{3}(B)}^{1/2} \leq \nms{|\E_{I_i}g|^{1/3}|\E_{I_j}g|^{2/3}}_{L^{6}(B)}^{1/2}\nms{|\E_{I_i}g|^{2/3}|\E_{I_j}g|^{1/3}}_{L^{6}(B)}^{1/2}
\end{align}
and therefore from \eqref{mabdef} (and using that $\nu\delta^{-1} \in \N$), the second term in \eqref{ef2d_bieq1} is bounded by
$$\sqrt{2}\nu^{-1}M_{1, 1}(\delta, \nu)(\sum_{J \in P_{\delta}([0, 1])}\nms{\E_{J}g}_{L^{6}(w_B)}^{2})^{1/2}.$$
Combining this with \eqref{diagbd} and applying the definition of $D(\delta)$
then completes the proof of Lemma \ref{biv1}.
\end{proof}

\subsection{A Fefferman-Cordoba argument}
In the proof of Lemma \ref{abup} we need a version of $M_{a, b}$
with both sides being $L^{6}(w_B)$. The following lemma shows that these two constants are equivalent.
\begin{lemma}\label{wuw}
Suppose $\delta$ and $\nu$ were such that $\nu^{a}\delta^{-1}$, $\nu^{b}\delta^{-1} \in \N$.
Let $M_{a, b}'(\delta, \nu)$ be the best constant such that
\begin{align*}
\int |\E_{I}g|^{2}|\E_{I'}g|^{4}w_{B} \leq M_{a, b}'(\delta, \nu)^{6}(\sum_{J \in P_{\delta}(I)}\nms{\E_{J}g}_{L^{6}(w_B)}^{2})(\sum_{J' \in P_{\delta}(I')}\nms{\E_{J'}g}_{L^{6}(w_B)}^{2})^{2}
\end{align*}
for all squares $B$ of side length $\delta^{-2}$, $g: [0, 1] \rightarrow \C$, and all $3\nu$-separated intervals
$I \in P_{\nu^{a}}([0, 1])$ and $I' \in P_{\nu^{b}}([0, 1])$. Then
\begin{align*}
M_{a, b}'(\delta, \nu) \leq 12^{100/6}M_{a, b}(\delta, \nu).
\end{align*}
\end{lemma}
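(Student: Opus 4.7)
The idea is to dominate the weight $w_B$ pointwise by an average of indicators of squares of side $R = \delta^{-2}$, apply the unweighted constant $M_{a, b}$ on each such square, and then reassemble the resulting weighted integrals using the complementary convolution bound. The starting point is that the stated convolution inequality $1_{B(0, R)} \ast w_{B(0, R)} \gtrsim R^{2} w_{B(0, R)}$ gives, after translation, the pointwise ``reverse sampling'' bound
\begin{equation*}
w_B(x) \lsm R^{-2}\int 1_{B(y, R)}(x)\, w_B(y)\, dy.
\end{equation*}
Inserting this into the left-hand side of the defining inequality for $M_{a, b}'$ and applying Fubini yields
\begin{equation*}
\int |\E_{I} g|^{2}|\E_{I'} g|^{4}w_B\, dx \lsm R^{-2}\int w_B(y)\bigg(\int_{B(y, R)}|\E_{I} g|^{2}|\E_{I'} g|^{4}\, dx\bigg) dy.
\end{equation*}
For each fixed $y$, the inner integral is over a square of side length $\delta^{-2}$, so I would apply the definition of $M_{a, b}(\delta, \nu)$ to bound it by
$M_{a, b}^{6}\bigl(\sum_{J \in P_{\delta}(I)}\nms{\E_{J} g}_{L^{6}(w_{B(y, R)})}^{2}\bigr)\bigl(\sum_{J' \in P_{\delta}(I')}\nms{\E_{J'}g}_{L^{6}(w_{B(y, R)})}^{2}\bigr)^{2}$.

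The main step is to absorb the $y$-integral against $w_B(y)$ into the desired product of sums with respect to $w_B$. I would expand the three-factor product into a triple sum over $(J, J_{1}', J_{2}')$, and for each fixed triple estimate the quantity
\begin{equation*}
\int w_B(y)\nms{\E_{J} g}_{L^{6}(w_{B(y, R)})}^{2}\nms{\E_{J_{1}'} g}_{L^{6}(w_{B(y, R)})}^{2}\nms{\E_{J_{2}'} g}_{L^{6}(w_{B(y, R)})}^{2}\, dy
\end{equation*}
by H\"older's inequality in $y$ with three exponents all equal to $3$. Each resulting factor has the form $\bigl(\int w_B(y)\nms{\E_{K} g}_{L^{6}(w_{B(y, R)})}^{6}\, dy\bigr)^{1/3}$; swapping the order of integration reduces the inside to $\int |\E_{K} g|^{6}(x)\bigl(\int w_B(y) w_{B(y, R)}(x)\, dy\bigr) dx$, and the stated property $w_{B(0, R)} \ast w_{B(0, R)} \lsm R^{2} w_{B(0, R)}$ gives (after translation) $\int w_B(y) w_{B(y, R)}(x)\, dy \lsm R^{2} w_B(x)$. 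Thus each H\"older factor is $\lsm R^{2/3}\nms{\E_{K} g}_{L^{6}(w_B)}^{2}$, and the product of the three factors carries an overall $R^{2}$ that exactly cancels the $R^{-2}$ from the sampling step.

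Reassembling the triple sum back into $\bigl(\sum_{J}\nms{\E_{J} g}_{L^{6}(w_B)}^{2}\bigr)\bigl(\sum_{J'}\nms{\E_{J'}g}_{L^{6}(w_B)}^{2}\bigr)^{2}$ and taking sixth roots yields the claimed inequality. The main obstacle is purely bookkeeping: the constant $12^{100/6}$ requires using the quantitative forms of the two stated convolution inequalities with their explicit dependence on the exponent $100$ in the definition of $w_B$. At the qualitative $\lsm$ level used throughout the rest of the paper, the proof reduces to a double application of the two weight-convolution bounds recalled in the introduction together with one application of H\"older's inequality.
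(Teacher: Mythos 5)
Your proposal is correct and is essentially the paper's proof: both hinge on the same two weight-convolution bounds (the reverse sampling inequality $1_{B(0,R)}\ast w_{B(0,R)}\gtrsim R^2 w_{B(0,R)}$ to localize the left side to squares, followed by the unweighted constant $M_{a,b}$ on each, followed by $w_B\ast w_B\lsm R^2 w_B$ to reabsorb), and the only cosmetic difference is that you expand into a triple sum and apply three-term H\"older before reassembling, whereas the paper splits the geometric mean by H\"older and then applies Minkowski in $L^3(w_B)$ -- an equivalent rearrangement of the same computation.
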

\begin{rem}
Since $1_B \lsm w_B$, we find $M_{a, b}(\delta, \nu) \lsm M_{a, b}'(\delta, \nu)$ and hence Lemma \ref{wuw} implies
$M_{a, b} \sim M_{a, b}'$.
\end{rem}
\begin{proof}
Fix arbitrary $3\nu$-separated intervals $I_1 \in P_{\nu^{a}}([0, 1])$ and $I_2 \in P_{\nu^{b}}([0, 1])$.
It suffices to assume that $B$ is centered at the origin.

Corollary \ref{convint} gives
\begin{align*}
\nms{\geom_{2, 4}|\E_{I_i}g|}_{L^{6}(w_{B})}^{6} \leq 3^{100}\int_{\R^2}\nms{\geom_{2, 4}|\E_{I_i}g|}_{L^{6}_{\#}(B(y, \delta^{-2}))}^{6}w_{B}(y)\, dy.
\end{align*}
Unraveling the definition of $L^{6}_{\#}$ and applying the definition of $M_{a, b}$ gives that the above is
\begin{align*}
&\leq 3^{100}\delta^{4}M_{a, b}(\delta, \nu)^{6}\int_{\R^2}\geom_{2, 4}(\sum_{J \in P_{\delta}(I_i)}\nms{\E_{J}g}_{L^{6}(w_{B(y, \delta^{-2})})}^{2})^{3}w_{B}(y)\, dy\\
&\leq 3^{100}\delta^{4}M_{a, b}(\delta, \nu)^{6}\geom_{2, 4}\int_{\R^2}(\sum_{J \in P_{\delta}(I_i)}\nms{\E_{J}g}_{L^{6}(w_{B(y, \delta^{-2})})}^{2})^{\frac{1}{2}\cdot 6}w_{B}(y)\, dy\\
&\leq 3^{100}\delta^{4}M_{a, b}(\delta, \nu)^{6}\geom_{2, 4}(\sum_{J \in P_{\delta}(I_i)}(\int_{\R^2}\nms{\E_{J}g}_{L^{6}(w_{B(y, \delta^{-2})})}^{6}w_{B}(y)\, dy)^{1/3})^{3}
\end{align*}
where the second inequality is by H\"{o}lder's inequality and the third inequality is by Minkowski.
Since $B$ is centered at the origin, $w_B \ast w_B \leq 4^{100}\delta^{-4}w_B$ (Lemma \ref{wbconvolve}) and hence
\begin{align*}
\delta^{4}\int_{\R^2}\nms{\E_{J}g}_{L^{6}(w_{B(y, \delta^{-2})})}^{6}w_{B}(y)\, dy \leq 4^{100} \nms{\E_{J}g}_{L^{6}(w_{B})}^{6}.
\end{align*}
This then immediately implies that $M_{a, b}'(\delta, \nu) \leq 12^{100/6} M_{a, b}(\delta, \nu)$ which
completes the proof of Lemma \ref{wuw}.
\end{proof}

We have the following key technical lemma of this paper.
We encourage the reader to compare the argument with that of \cite[Lemma 4.4]{pierce}.

\begin{lemma}\label{abup}
Let $a$ and $b$ be integers such that $1 \leq a \leq 2b$.
Suppose $\delta$ and $\nu$ was such that $\nu^{2b}\delta^{-1} \in \N$.
Then
$$M_{a, b}(\delta, \nu) \leq 10^{1000} \nu^{-1/6}M_{2b, b}(\delta, \nu).$$
\end{lemma}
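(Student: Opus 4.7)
My plan is to mimic Pierce's Lemma 4.4 in the decoupling setting via a Fefferman-Cordoba-style argument. After invoking Lemma \ref{wuw} to pass to the weighted version $M'_{a,b}(\delta, \nu)$, I replace $w_B$ by a Schwartz cutoff with Fourier support of size $O(\delta^{2})$ (modulo the standard weight manipulations from \cite[Section 2.2]{thesis} that cost only absolute constants). Since $a \leq 2b$ ensures $\nu^{a-2b} \in \N$, I expand
\begin{align*}
|\E_{I}g|^{2} = \sum_{J_{1}, J_{2} \in P_{\nu^{2b}}(I)} (\E_{J_{1}}g)\overline{(\E_{J_{2}}g)}
\end{align*}
and substitute into the integral defining $M'_{a,b}$. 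The crux of the argument, mirroring the congruence-lifting in Pierce's proof, is a Fourier-support count showing that only $O(\nu^{-1})$ partners $J_{2}$ per $J_{1}$ can contribute non-negligibly.

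Write $h := c(J_{1}) - c(J_{2}) \in \nu^{2b}\Z$ and $s := c(J_{1}) + c(J_{2})$, with $c(\cdot)$ denoting the center. A direct parametrization shows that the Fourier transform of $(\E_{J_{1}}g)\overline{(\E_{J_{2}}g)}$ is supported in the parallelogram $\{(h, sh) + \delta_{-}(1, s) + \delta_{+}(0, h) : |\delta_{\pm}| \leq \nu^{2b}\}$, whose projection perpendicular to the direction $(1, 2c(I'))$ has extent only $\nu^{2b}|s - 2c(I')| + \nu^{2b}|h| = O(\nu^{2b})$ since $|s|, |h|, |c(I')| \leq 1$. On the other hand, the Fourier transform of $|\E_{I'}g|^{4}$ is a strip along $(1, 2c(I'))$ through the origin with perpendicular width $O(\nu^{2b})$. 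The triple-product integral therefore vanishes unless the perpendicular distance from $(h, sh)$ to that strip, which equals $|h||s - 2c(I')|$ up to a factor of $O(1)$, is $\lsm \nu^{2b}$. Since $I$ and $I'$ are $3\nu$-separated, $|s - 2c(I')| \geq 6\nu$, forcing $|h| \lsm \nu^{2b-1}$; as $h \in \nu^{2b}\Z$, this admits only $O(\nu^{-1})$ values, matching Pierce's $p^{2b-a}$ loss up to the normalization $p^{a+2b}$ discussed in the outline.

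For each admissible pair I would apply the pointwise AM-GM bound $|\E_{J_{1}}g||\E_{J_{2}}g| \leq \tfrac{1}{2}(|\E_{J_{1}}g|^{2} + |\E_{J_{2}}g|^{2})$, integrate against $|\E_{I'}g|^{4}$ and the weight, and invoke $M'_{2b,b}$ on each diagonal term (the $3\nu$-separation of $J_{i} \subset I$ from $I'$ is inherited). Telescoping $\sum_{J_{1} \in P_{\nu^{2b}}(I)}\sum_{\tilde{J} \in P_{\delta}(J_{1})} = \sum_{\tilde{J} \in P_{\delta}(I)}$ assembles the full $\ell^{2}$ sum over $P_{\delta}(I)$. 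The $O(\nu^{-1})$ admissible count multiplies to give the prefactor $\nu^{-1}$; taking sixth roots yields $\nu^{-1/6}M_{2b,b}(\delta, \nu)$, with all absolute constants (from Lemma \ref{wuw}, from the weight swap, and from the Fourier count) absorbed into $10^{1000}$.

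The main technical obstacle is carrying out the ``zero contribution from non-admissible pairs'' step rigorously at the weight level: because $w_{B}$ has only polynomial decay, the vanishing is strictly speaking only up to a Schwartz error, and the total number of non-admissible pairs is $\nu^{2(a-2b)}$, which is huge. I would resolve this either by passing to the auxiliary Schwartz majorant $\eta_B$ of \cite[Corollary 2.2.9]{thesis} (whose Fourier transform is genuinely compactly supported in $B(0, \delta^{2})$, making the vanishing exact and thereby legitimizing the pointwise AM-GM step), or by dyadically decomposing the non-admissible $|h|$-range and summing the Schwartz tails into an absolutely convergent geometric series. The $\geom_{2,4}$-Hölder formalism from Section \ref{fc} then handles all remaining $L^{6}(w_{B})$ bookkeeping mechanically.
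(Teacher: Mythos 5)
Your proposal takes essentially the same route as the paper: expand $|\E_I g|^2$ into $\nu^{2b}$-caps, run a Fourier-support count to show only $O(\nu^{-1})$ partners $J_2$ per $J_1$ contribute, then AM-GM/Cauchy-Schwarz and telescope through $M'_{2b,b}$ and Lemma \ref{wuw}. Your Fourier-support geometry — $(\E_{J_1}g)\overline{(\E_{J_2}g)}$ localized near $(h, sh)$ and $|\E_{I'}g|^4$ in a strip through the origin of perpendicular width $O(\nu^{2b})$, with $3\nu$-separation forcing $|h| \lesssim \nu^{2b-1}$ — is mathematically the same argument as the paper's inequality \eqref{f3}--\eqref{f4}, though the paper first shifts $I_2$ to $[0, \nu^b]$ via the change of variables $T_\beta$ so that $c(I')$ is essentially $0$, which makes the geometry simpler (the strip aligns with the coordinate axes and the paper reads the constraint off $|\xi_1^2 - \xi_4^2| \leq 5\nu^{2b}$). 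Working in the original coordinates as you do is fine, only slightly more cumbersome.

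The one organizational misstep is at the start: you propose to first invoke Lemma \ref{wuw} to pass to $M'_{a,b}$ and then replace $w_B$ by $\eta_B$. But $\eta_B$ is Schwartz and does not pointwise majorize the polynomially-decaying $w_B$, so there is no direct pointwise passage $w_B \lesssim \eta_B$; this is exactly the ``main technical obstacle'' you flag later. The paper avoids it by starting from the \emph{unweighted} $\int_B$ in the definition of $M_{a,b}$, where $1_{T_\beta(B)} \leq 1_{100B} \leq \eta_{100B}$ gives the compactly-Fourier-supported majorant for free, running the vanishing argument there, and invoking Lemma \ref{wuw} only at the very end, to bound the near-diagonal weighted integrals $\int |\E_J g|^2 |\E_{I_2}g|^4 w_B$ by $M_{2b,b}$. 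Your resolution (a) is the right fix, but it amounts to not passing to $M'_{a,b}$ first — keep the LHS unweighted and use Lemma \ref{wuw} on the RHS. Resolution (b) is more delicate than you acknowledge (you would need to control the $L^1$ mass of the product Fourier transform against the moderate decay of $\widehat{w_B}$, which has only limited smoothness at the origin), so (a) is clearly the route to take. With that reordering, the proof is correct.
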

\begin{proof}
It suffices to assume that $B$ is centered at the origin with side length $\delta^{-2}$.
The integrality conditions on $\delta$ and $\nu$ imply that $\delta \leq \nu^{2b}$
and $\nu^{a}\delta^{-1}, \nu^{b}\delta^{-1} \in \N$.
Fix arbitrary intervals $I_1 = [\alpha, \alpha + \nu^a] \in P_{\nu^{a}}([0, 1])$ and
$I_2 = [\beta, \beta + \nu^b] \in P_{\nu^{b}}([0, 1])$ which are $3\nu$-separated.

Let $g_{\beta}(x) := g(x + \beta)$, $T_{\beta} = (\begin{smallmatrix} 1 & 2\beta\\0 & 1\end{smallmatrix})$, and $d := \alpha - \beta$. Shifting
$I_2$ to $[0, \nu^b]$ gives that
\begin{align}\label{cov}
\int_{B}|(\E_{I_1}g)(x)|^{2}|(\E_{I_2}g)(x)|^{4}\, dx &= \int_{B}|(\E_{[d, d + \nu^a]}g_{\beta})(T_{\beta}x)|^{2}|(\E_{[0, \nu^b]}g_{\beta})(T_{\beta}x)|^{4}\, dx\nonumber\\
& = \int_{T_{\beta}(B)}|(\E_{[d, d + \nu^a]}g_{\beta})(x)|^{2}|(\E_{[0, \nu^b]}g_{\beta})(x)|^{4}\, dx.
\end{align}
Note that $d$ can be negative; however since $g: [0, 1] \rightarrow \C$ and $d = \alpha - \beta$, $\E_{[d, d + \nu^a]}g_{\beta}$ is defined.
Since $|\beta| \leq 1$, $T_{\beta}(B) \subset 100B$. Combining this with $1_{100B} \leq \eta_{100B}$ gives that \eqref{cov} is
\begin{align}\label{target0}
&\leq \int_{\R^2} |(\E_{[d, d + \nu^a]}g_{\beta})(x)|^{2}|(\E_{[0, \nu^b]}g_{\beta})(x)|^{4}\eta_{100B}(x)\, dx\nonumber\\
& = \sum_{J_1, J_2 \in P_{\nu^{2b}}([d, d + \nu^a])}\int_{\R^2}(\E_{J_1}g_{\beta})(x)\ov{(\E_{J_2}g_{\beta})(x)}|(\E_{[0, \nu^b]}g_{\beta})(x)|^{4}\eta_{100B}(x)\, dx.
\end{align}
We claim that if $d(J_1, J_2) > 10\nu^{2b - 1}$, the integral in \eqref{target0} is equal to 0.

Suppose $J_1, J_2 \in P_{\nu^{2b}}([d, d + \nu^a])$ such that $d(J_1, J_2) > 10\nu^{2b - 1}$.
Expanding the integral in \eqref{target0} for this pair of $J_1, J_2$ gives that it is equal to
\begin{equation}\label{target1}
\int_{\R^2}\bigg(\int_{J_1 \times [0, \nu^b]^2 \times J_2 \times [0, \nu^b]^2}\prod_{i = 1}^{3}g_{\beta}(\xi_i)\ov{g_{\beta}(\xi_{i + 3})}e(\cdots)\, \prod_{i = 1}^{6}d\xi_i\bigg)\eta_{100B}(x)\, dx
\end{equation}
where the expression inside the $e(\cdots)$ is
$$((\xi_1 - \xi_4)x_1 + (\xi_{1}^{2} - \xi_{4}^{2})x_2) + ((\xi_2 + \xi_3 - \xi_5 - \xi_6)x_1 + (\xi_{2}^{2} + \xi_{3}^{2} -\xi_{5}^{2} - \xi_{6}^{2})x_2).$$
Interchanging the integrals in $\xi$ and $x$ shows that the integral in $x$ is equal to the inverse Fourier transform of $\eta_{100B}$ evaluated at
\begin{align*}
(\sum_{i = 1}^{3}(\xi_{i} - \xi_{i + 3}), \sum_{i = 1}^{3}(\xi_{i}^{2} - \xi_{i + 3}^{2})).
\end{align*}
Since the inverse Fourier transform of $\eta_{100B}$ is supported in $B(0, \delta^{2}/100)$, \eqref{target1} is equal to 0
unless
\begin{align}\label{f2}
|\sum_{i = 1}^{3}(\xi_{i} - \xi_{i + 3})| &\leq \delta^{2}/200\nonumber\\
|\sum_{i = 1}^{3}(\xi_{i}^{2} - \xi_{i + 3}^{2})| & \leq \delta^{2}/200.
\end{align}
Since $\delta \leq \nu^{2b}$ and $\xi_{i} \in [0, \nu^b]$ for $i = 2, 3, 5, 6$, \eqref{f2} implies
\begin{align}\label{f3}
|\xi_1 - \xi_4||\xi_1 + \xi_4| = |\xi_{1}^{2} - \xi_{4}^{2}| \leq 5\nu^{2b}.
\end{align}
Since $I_1, I_2$ are $3\nu$-separated, $|d| \geq 3\nu$. Recall that $\xi_1 \in J_1$, $\xi_4 \in J_2$ and $J_1, J_2$
are subsets of $[d, d + \nu^a]$. Write $\xi_1 = d + r$ and $\xi_4 = d + s$ with $r, s \in [0, \nu^a]$.
Then
\begin{align}\label{f4}
|\xi_1 + \xi_4| = |2d + (r + s)| \geq 6\nu - |r + s| \geq 6\nu - 2\nu^{a} \geq 4\nu.
\end{align}
Since $d(J_1, J_2) > 10\nu^{2b - 1}$, $|\xi_1 - \xi_4| > 10\nu^{2b - 1}$.
Therefore the left hand side of \eqref{f3} is $> 40\nu^{2b}$, a contradiction. Thus
the integral in \eqref{target0} is equal to 0 when $d(J_1, J_2) > 10\nu^{2b - 1}$.

The above analysis implies that \eqref{target0} is
\begin{align*}
\leq \sum_{\st{J_1, J_2 \in P_{\nu^{2b}}([d, d + \nu^a])\\d(J_1, J_2) \leq 10\nu^{2b - 1}}}\int_{\R^2}|(\E_{J_1}g_{\beta})(x)||(\E_{J_2}g_{\beta})(x)||(\E_{[0, \nu^b]}g_{\beta})(x)|^{4}\eta_{100B}(x)\, dx.
\end{align*}
Undoing the change of variables as in \eqref{cov} gives that the above is equal to
\begin{align}\label{pent1}
\sum_{\st{J_1, J_2 \in P_{\nu^{2b}}(I_1)\\d(J_1, J_2) \leq 10\nu^{2b - 1}}}\int_{\R^2}|(\E_{J_1}g)(x)||(\E_{J_2}g)(x)||(\E_{I_2}g)(x)|^{4}\eta_{100B}(T_{\beta}x)\, dx.
\end{align}
Observe that
\begin{align*}
\eta_{100B}(T_{\beta}x) \leq 10^{2400}w_{100B}(T_{\beta}x) \leq 10^{2600} w_{100B}(x) \leq 10^{2800} w_{B}(x)
\end{align*}
where the second inequality is an application of Lemma \ref{shear} and the last inequality is because $w_{B}(x)^{-1}w_{100B}(x) \leq 10^{200}$.
An application of the Cauchy-Schwarz inequality shows that \eqref{pent1} is
\begin{align*}
\leq 10^{2800}\sum_{\st{J_1, J_2 \in P_{\nu^{2b}}(I_1)\\d(J_1, J_2) \leq 10\nu^{2b - 1}}}(\int_{\R^2}|\E_{J_1}g|^{2}|\E_{I_2}g|^{4}w_B)^{1/2}(\int_{\R^2}|\E_{J_2}g|^{2}|\E_{I_2}g|^{4}w_{B})^{1/2}.
\end{align*}
Note that for each $J_1\in P_{\nu^{2b}}(I_1)$, there are $\leq 10000\nu^{-1}$ intervals $J_2 \in P_{\nu^{2b}}(I_1)$
such that $d(J_1, J_2) \leq 10\nu^{2b - 1}$.
Thus two applications of the Cauchy-Schwarz inequality bounds the above by
\begin{align*}
10^{2802}\nu^{-1/2}&(\sum_{J_1 \in P_{\nu^{2b}}(I_1)}\int_{\R^2}|\E_{J_1}g|^{2}|\E_{I_2}g|^{4}w_B)^{1/2}\times\\
&\hspace{1in}(\sum_{J_1 \in P_{\nu^{2b}}(I_1)}\sum_{\st{J_2 \in P_{\nu^{2b}}(I_2)\\d(J_1, J_2) \leq 10\nu^{2b - 1}}}\int_{\R^2}|\E_{J_2}g|^{2}|\E_{I_2}g|^{4}w_B)^{1/2}.
\end{align*}
Since there are $\leq 10000\nu^{-1}$ relevant $J_2$ for each $J_1$, the above is
\begin{align*}
&\leq 10^{3000} \nu^{-1}\sum_{J \in P_{\nu^{2b}}(I_1)}\int_{\R^2}|\E_{J}g|^{2}|\E_{I_2}g|^{4}w_B\\
&\leq 10^{3000}12^{100}\nu^{-1}M_{2b, b}(\delta, \nu)^{6}(\sum_{J \in P_{\delta}(I_1)}\nms{\E_{J}g}_{L^{6}(w_B)}^{2})(\sum_{J' \in P_{\delta}(I_2)}\nms{\E_{J'}g}_{L^{6}(w_B)}^{2})^{2}
\end{align*}
where the last inequality is an application of Lemma \ref{wuw}.
This completes the proof of Lemma \ref{abup}.
\end{proof}

Iterating Lemmas \ref{switch} and \ref{abup} repeatedly gives the following estimate.
\begin{lemma}\label{m11iter}
Let $N \in \N$ and suppose $\delta$ and $\nu$ were such that $\nu^{2^{N}}\delta^{-1} \in \N$.
Then
\begin{align*}
M_{1, 1}(\delta, \nu) \leq 10^{60000}\nu^{-1/3}D(\frac{\delta}{\nu^{2^{N-1}}})^{\frac{1}{3\cdot 2^{N}}}D(\frac{\delta}{\nu^{2^{N}}})^{\frac{2}{3\cdot 2^{N}}}\prod_{j = 0}^{N-1}D(\frac{\delta}{\nu^{2^j}})^{1/2^{j + 1}}.
\end{align*}
\end{lemma}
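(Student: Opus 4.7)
The plan is to iterate Lemmas \ref{switch} and \ref{abup} alternately through dyadic scales, then terminate the iteration using the trivial bound Lemma \ref{mabtriv}. Each combined step will convert control on $M_{2^{j-1}, 2^{j}}$ into control on $M_{2^{j}, 2^{j+1}}^{1/2}$: applying Lemma \ref{abup} with $a = 2^{j-1}$, $b = 2^{j}$ (which is permitted since $a \leq 2b$) loses a factor of $\nu^{-1/6}$ and moves us to $M_{2^{j+1}, 2^{j}}$, after which Lemma \ref{switch} with that same $b$ loses $D(\delta/\nu^{2^{j}})^{1/2}$ and moves us to $M_{2^{j}, 2^{j+1}}^{1/2}$.

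Concretely, I would first combine Lemma \ref{abup} (with $a = b = 1$) and Lemma \ref{switch} (with $b = 1$) to obtain
\begin{align*}
M_{1,1}(\delta, \nu) \lsm \nu^{-1/6} M_{1,2}(\delta, \nu)^{1/2} D(\delta/\nu)^{1/2},
\end{align*}
and then repeat the same combined step at every subsequent dyadic scale. A straightforward induction on $N$ then yields
\begin{align*}
M_{1,1}(\delta, \nu) \leq C \, \nu^{-\sum_{k=0}^{N-1} 1/(6 \cdot 2^{k})} \, M_{2^{N-1}, 2^{N}}(\delta, \nu)^{1/2^{N}} \prod_{j=0}^{N-1} D(\delta/\nu^{2^{j}})^{1/2^{j+1}}.
\end{align*}
The geometric sum in the exponent of $\nu$ equals $\frac{1}{3}(1 - 2^{-N}) < \frac{1}{3}$, giving the claimed $\nu^{-1/3}$. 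The integrality condition $\nu^{2^{j+1}} \delta^{-1} \in \N$ required at each stage is implied by the standing hypothesis $\nu^{2^{N}} \delta^{-1} \in \N$.

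To close the iteration, I would apply Lemma \ref{mabtriv} to $M_{2^{N-1}, 2^{N}}$, which bounds it by $10^{20000} D(\delta/\nu^{2^{N-1}})^{1/3} D(\delta/\nu^{2^{N}})^{2/3}$; raising to the $1/2^{N}$ power then produces exactly the two leading $D$ factors in the statement. The only remaining task is tracking explicit constants: each combined step contributes a factor $10^{1000} \cdot 10^{10000} = 10^{11000}$, but contributions from successive substitutions are raised to powers $1/2^{k}$, so the total from all $N$ substitutions is bounded by $\prod_{k=0}^{N-1} (10^{11000})^{1/2^{k}} \leq 10^{22000}$. Combined with the $10^{20000/2^{N}} \leq 10^{20000}$ from the terminal application of Lemma \ref{mabtriv}, the overall constant stays comfortably below $10^{60000}$. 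I don't foresee any real obstacle beyond this bookkeeping, since the iteration scheme is essentially forced by the shape of the bilinear constants on the right sides of Lemmas \ref{switch} and \ref{abup}.
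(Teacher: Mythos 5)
Your proposal is correct and follows essentially the same path as the paper: the paper also combines Lemmas \ref{switch} and \ref{abup} into a single combined step (yielding $M_{a,b} \leq 10^{20000}\nu^{-1/6}M_{b,2b}^{1/2}D(\delta/\nu^b)^{1/2}$), iterates it $N$ times to reach $M_{2^{N-1},2^{N}}^{1/2^N}$, and then closes with Lemma \ref{mabtriv}. The only differences are cosmetic matters of constant bookkeeping, where the paper rounds its per-step constant to $10^{20000}$ for safety.
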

\begin{proof}
Lemmas \ref{switch} and \ref{abup} imply that if $1 \leq a \leq 2b$ and $\delta$ and $\nu$ were such that $\nu^{2b}\delta^{-1} \in \N$,
then
\begin{align}\label{brev}
M_{a, b}(\delta, \nu) \leq 10^{20000}\nu^{-1/6}M_{b, 2b}(\delta, \nu)^{1/2}D(\frac{\delta}{\nu^b})^{1/2}.
\end{align}

Since $\nu^{2^{N}}\delta^{-1} \in \N$, $\nu^{i}\delta^{-1} \in \N$ for $i = 0, 1, 2, \ldots, 2^{N}$.
Applying \eqref{brev} repeatedly gives
\begin{align*}
M_{1, 1}(\delta, \nu) \leq 10^{40000}\nu^{-1/3}M_{2^{N-1}, 2^{N}}(\delta, \nu)^{\frac{1}{2^N}}\prod_{j = 0}^{N-1}D(\frac{\delta}{\nu^{2^j}})^{1/2^{j + 1}}.
\end{align*}
Bounding $M_{2^{N-1}, 2^{N}}$ using Lemma \ref{mabtriv} then completes the proof of Lemma \ref{m11iter}.
\end{proof}

\begin{rem}
A similar analysis as in \eqref{f2}-\eqref{f4} shows that if $1 \leq a < b$ and $\delta$ and $\nu$ were such that $\nu^{b}\delta^{-1} \in \N$,
then $M_{a, b}(\delta, \nu) \lsm M_{b, b}(\delta, \nu)$. Though we do not iterate this way in this section, it is enough to close
the iteration with $M_{a, b} \lsm M_{b, b}$ for $1 \leq a < b$, and $M_{b, b} \lsm \nu^{-1/6}M_{2b, b}$, and Lemma \ref{switch}.
We interpret the iteration and in particular Lemma \ref{abup} this way in Sections \ref{unc}-\ref{bds}.
\end{rem}

\subsection{The $O_{\vep}(\delta^{-\vep})$ bound}\label{fc_iter}
Combining Lemma \ref{m11iter} with Lemma \ref{biv1} gives the following.
\begin{cor}\label{decrec}
Let $N \in \N$ and suppose $\delta$ and $\nu$ were such that $\nu^{2^{N}}\delta^{-1} \in \N$.
Then
\begin{align*}
D(\delta) \leq 10^{10^{5}}\bigg(D(\frac{\delta}{\nu}) + \nu^{-4/3}D(\frac{\delta}{\nu^{2^{N-1}}})^{\frac{1}{3\cdot 2^{N}}}D(\frac{\delta}{\nu^{2^{N}}})^{\frac{2}{3\cdot 2^{N}}}\prod_{j = 0}^{N-1}D(\frac{\delta}{\nu^{2^j}})^{1/2^{j + 1}}\bigg)
\end{align*}
\end{cor}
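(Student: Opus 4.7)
The plan is to read Corollary \ref{decrec} as a direct concatenation of the bilinear reduction in Lemma \ref{biv1} with the iterated bilinear bound in Lemma \ref{m11iter}. There is no new idea needed, only a careful check that the hypotheses of both lemmas are available and that the constants line up to fit inside $10^{10^5}$.

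First I would verify the integrality conditions. Writing $\nu = 1/m$ with $m \in \N$, the hypothesis $\nu^{2^N}\delta^{-1} \in \N$ says $m^{2^N} \mid \delta^{-1}$, which automatically gives $\nu^j \delta^{-1} \in \N$ for every $0 \leq j \leq 2^N$. In particular $\nu \delta^{-1} \in \N$, so Lemma \ref{biv1} applies, and all the quantities $\delta/\nu^{2^j}$ appearing in Lemma \ref{m11iter} lie in $\N^{-1}$, so every $D(\delta/\nu^{2^j})$ and $D(\delta/\nu^{2^{N-1}})$, $D(\delta/\nu^{2^N})$ on the right-hand side is well-defined.

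Next I would apply Lemma \ref{biv1} to obtain
\begin{align*}
D(\delta) \leq 10^{30000}\bigl(D(\tfrac{\delta}{\nu}) + \nu^{-1} M_{1,1}(\delta,\nu)\bigr),
\end{align*}
and then substitute the bound on $M_{1,1}(\delta,\nu)$ from Lemma \ref{m11iter}. The constant combines to $10^{30000} \cdot 10^{60000} = 10^{90000}$, and the powers of $\nu$ combine to $\nu^{-1} \cdot \nu^{-1/3} = \nu^{-4/3}$, yielding exactly the product structure displayed in the corollary.

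Finally, I would absorb both $10^{30000}$ (on the $D(\delta/\nu)$ term) and $10^{90000}$ (on the product term) into the uniform prefactor $10^{10^5}$, which is crude but well larger than either. The only thing to be mildly careful about is keeping the two summands under a common constant rather than pulling constants inside the product; doing so is automatic since $10^{30000}, 10^{90000} \leq 10^{10^5}$. There is essentially no obstacle in this step — the real work was done in Lemmas \ref{biv1} and \ref{m11iter}, and this corollary is just their composition written in a form convenient for the subsequent iteration in Section \ref{fc_iter}.
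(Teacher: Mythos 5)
Your proposal is correct and is exactly the paper's argument: the corollary is the verbatim composition of Lemma \ref{biv1} with Lemma \ref{m11iter}, with $10^{30000}\cdot 10^{60000}=10^{90000}\le 10^{10^5}$ absorbed into the common prefactor. The integrality check you perform (that $\nu^{2^N}\delta^{-1}\in\N$ and $\nu^{-1}\in\N$ together give $\nu^{j}\delta^{-1}\in\N$ for $0\le j\le 2^N$) is the only thing to verify, and you handle it correctly.
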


Choosing $\nu = \delta^{1/2^{N}}$ in Corollary \ref{decrec}
and requiring that $\nu=\delta^{1/2^{N}} \in \N^{-1} \cap (0, 1/100)$
gives the following result.
\begin{cor}\label{core}
Let $N \in \N$ and suppose $\delta$ was such that $\delta^{-1/2^{N}} \in \N$ and
$\delta < 100^{-2^{N}}$. Then
\begin{align*}
D(\delta) \leq 10^{10^{5}}\bigg(D(\delta^{1 - \frac{1}{2^{N}}}) + \delta^{-\frac{4}{3\cdot 2^{N}}}D(\delta^{1/2})^{\frac{1}{3\cdot 2^{N}}}\prod_{j = 0}^{N-1}D(\delta^{1 - \frac{1}{2^{N - j}}})^{\frac{1}{2^{j + 1}}}\bigg).
\end{align*}
\end{cor}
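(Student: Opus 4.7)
The plan is to derive Corollary \ref{core} from Corollary \ref{decrec} by the single substitution $\nu = \delta^{1/2^N}$. First I would verify the hypotheses of Corollary \ref{decrec} under the given assumptions: the assumption $\delta^{-1/2^N} \in \N$ is exactly $\nu \in \N^{-1}$, and the assumption $\delta < 100^{-2^N}$ is exactly $\nu < 1/100$. The integrality requirement $\nu^{2^N}\delta^{-1} \in \N$ becomes $\delta \cdot \delta^{-1} = 1 \in \N$, which is automatic. So Corollary \ref{decrec} applies.

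Next I would compute the various arguments of $D$ appearing on the right-hand side after the substitution. We have $\delta/\nu = \delta^{1 - 1/2^N}$ and, more generally, $\delta/\nu^{2^j} = \delta^{1 - 2^j/2^N} = \delta^{1 - 1/2^{N-j}}$ for $0 \leq j \leq N-1$; in particular $\delta/\nu^{2^{N-1}} = \delta^{1/2}$. For the prefactor we get $\nu^{-4/3} = \delta^{-4/(3 \cdot 2^N)}$. These identifications match the terms appearing in Corollary \ref{core} term-by-term, with the exception of the factor $D(\delta/\nu^{2^N})^{2/(3\cdot 2^N)} = D(1)^{2/(3 \cdot 2^N)}$, which does not appear in the statement of Corollary \ref{core}.

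The only potential obstacle is handling this stray $D(1)$ factor, but it is an absolute constant rather than a function of $\delta$: when $\delta = 1$, the sum $\sum_{J \in P_1([0,1])}\|\E_J g\|_{L^6(w_B)}^2$ has a single term equal to $\|\E_{[0,1]}g\|_{L^6(w_B)}^2$, and combined with $1_B \lsm w_B$ this gives $D(1) \lsm 1$. Hence $D(1)^{2/(3\cdot 2^N)} \leq D(1)$ is uniformly bounded in $N$, and can be absorbed into the $10^{10^5}$ prefactor (after possibly slightly enlarging it, which is harmless because we are tracking only a single universal constant). After this absorption the inequality displayed in Corollary \ref{core} follows directly.

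In summary, the proof is a one-line substitution plus a verification of the input hypotheses and the absorption of $D(1)^{2/(3\cdot 2^N)}$ into the universal constant; no further analytic input is needed beyond Corollary \ref{decrec}.
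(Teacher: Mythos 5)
Your proof is correct and matches the paper's (one-line) approach: substitute $\nu = \delta^{1/2^N}$ in Corollary~\ref{decrec} and translate the hypotheses exactly as you did. You were also right to flag the stray $D(1)^{2/(3\cdot 2^N)}$ factor, which the paper silently absorbs; in fact no enlargement of $10^{10^5}$ is required, since combining Lemmas~\ref{m11iter} and~\ref{biv1} only produces the constant $10^{90000}$, while $D(1)\le 2^{100/6}$ (from $1_B\le 2^{100}w_B$), so $10^{90000}\cdot D(1)^{2/(3\cdot 2^N)}\le 10^{10^5}$ already holds.
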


Corollary \ref{core} allows us to conclude that $D(\delta) \lsm_{\vep} \delta^{-\vep}$.
To see this, the trivial bounds for $D(\delta)$ are $1 \lsm D(\delta) \lsm \delta^{-1/2}$ for all $\delta \in \N^{-1}$.
Let $\ld$ be the smallest real number such that $D(\delta) \lsm_{\vep} \delta^{-\ld - \vep}$ for all $\delta \in \N^{-1}$.
From the trivial bounds, $\ld \in [0, 1/2]$. We claim that $\ld = 0$.
Suppose $\ld > 0$.

Choose $N$ to be an integer such that
\begin{align}\label{ef2d_mchoice}
\frac{5}{6} + \frac{N}{2} - \frac{4}{3\ld}\geq 1.
\end{align}
Then by Corollary \ref{core}, for $\delta^{-1/2^{N}} \in \N$ with $\delta < 100^{-2^{N}}$,
\begin{align*}
D(\delta) &\lsm_{\vep} \delta^{-\ld(1 - \frac{1}{2^{N}}) - \vep} + \delta^{-\frac{4}{3\cdot 2^{N}} - \frac{\ld}{6\cdot 2^{N}} - \sum_{j = 0}^{N-1}(1 - \frac{1}{2^{N - j}})\frac{\ld}{2^{j + 1}}- \vep}\\
&\lsm_{\vep} \delta^{-\ld(1 - \frac{1}{2^{N}}) - \vep} + \delta^{-\ld(1 - (\frac{5}{6} + \frac{N}{2} - \frac{4}{3\ld})\frac{1}{2^{N}}) - \vep} \lsm_{\vep} \delta^{-\ld(1 - \frac{1}{2^{N}}) - \vep}
\end{align*}
where in the last inequality we have used \eqref{ef2d_mchoice}.
Applying almost multiplicativity of the linear decoupling constant (similar to \cite[Section 2.10]{thesis} or the proof of Lemma \ref{expstep2} later) then shows that for all $\delta \in \N^{-1}$,
\begin{align*}
D(\delta) \lsm_{N, \vep} \delta^{-\ld(1 - \frac{1}{2^{N}}) - \vep}.
\end{align*}
This then contradicts minimality of $\ld$. Therefore $\ld = 0$ and thus we have shown that $D(\delta) \lsm_{\vep}\delta^{-\vep}$ for all $\delta \in \N^{-1}$.

\subsection{An explicit bound}
Having shown that $D(\delta) \lsm_{\vep} \delta^{-\vep}$, we now make this dependence on $\vep$ explicit.
Fix arbitrary $0 < \vep < 1/100$. Then $D(\delta) \leq C_{\vep}\delta^{-\vep}$ for all $\delta \in \N^{-1}$.
\begin{lemma}\label{expstep1}
Fix arbitrary $0 < \vep < 1/100$ and suppose $D(\delta) \leq C_{\vep}\delta^{-\vep}$ for all $\delta\in \N^{-1}$.
Let integer $N \geq 1$ be such that $$\frac{5}{6} + \frac{N}{2} - \frac{4}{3\vep} > 0.$$
Then for $\delta$ such that $\delta^{-1/2^{N}} \in \N$ and $\delta < 100^{-2^{N}}$, we have
$$D(\delta) \leq 2\cdot 10^{10^{5}}C_{\vep}^{1 - \frac{\vep}{2^{N}}}\delta^{-\vep}.$$
\end{lemma}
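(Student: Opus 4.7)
The plan is to apply Corollary \ref{core} once with the given $N$ and $\delta$, then to substitute the hypothesis $D(\mu)\le C_\vep\mu^{-\vep}$ into every $D$-factor on the right-hand side and collect the resulting exponents of $C_\vep$ and of $\delta$. Since $\delta^{-1/2^N}\in\N$ and $\delta<100^{-2^N}$, Corollary \ref{core} legitimately gives
\begin{equation*}
D(\delta)\le 10^{10^{5}}\Big[D(\delta^{1-\tfrac{1}{2^N}})+\delta^{-\tfrac{4}{3\cdot 2^N}}D(\delta^{1/2})^{\tfrac{1}{3\cdot 2^N}}\prod_{j=0}^{N-1}D(\delta^{1-\tfrac{1}{2^{N-j}}})^{\tfrac{1}{2^{j+1}}}\Big].
\end{equation*}
I will show each of the two summands is at most $C_\vep^{1-\vep/2^N}\delta^{-\vep}$, after which the claimed bound follows with the outer constant $2\cdot 10^{10^5}$.

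For the second (product) summand, substitution together with the identities $\sum_{j=0}^{N-1}2^{-(j+1)}=1-2^{-N}$ and $\sum_{j=0}^{N-1}(1-2^{-(N-j)})2^{-(j+1)}=1-2^{-N}-N\cdot 2^{-(N+1)}$ collapses the total exponent of $C_\vep$ to $1-\tfrac{2}{3\cdot 2^N}$ and the total exponent of $\delta$ to
\begin{equation*}
-\vep+\tfrac{\vep}{2^N}\Big[\tfrac{5}{6}+\tfrac{N}{2}-\tfrac{4}{3\vep}\Big].
\end{equation*}
The standing hypothesis on $N$ makes the bracketed quantity strictly positive, so (using $\delta<1$) the $\delta$-power is at worst $\delta^{-\vep}$. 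The assumption $\vep<1/100<2/3$ gives $1-\tfrac{2}{3\cdot 2^N}\le 1-\tfrac{\vep}{2^N}$, hence (since we may take $C_\vep\ge 1$) $C_\vep^{1-2/(3\cdot 2^N)}\le C_\vep^{1-\vep/2^N}$, which bounds this summand as desired.

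For the first summand, the same substitution produces $C_\vep\delta^{-\vep(1-1/2^N)}=C_\vep\delta^{\vep/2^N}\cdot\delta^{-\vep}$, so matching the target reduces to the inequality $(C_\vep\delta)^{\vep/2^N}\le 1$, i.e.\ $\delta\le C_\vep^{-1}$. This is the only delicate step and is controlled by the smallness hypothesis $\delta<100^{-2^N}$ under the mild normalisation $C_\vep\le 100^{2^N}$, which one may impose (e.g.\ by taking $C_\vep$ to be the infimum of admissible constants, or by tracking the explicit bound obtained in Section \ref{fc_iter}). The rest of the argument is mechanical exponent bookkeeping, and the main obstacle is precisely this matching for the first summand.
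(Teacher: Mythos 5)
Your plan follows the paper's opening move — apply Corollary \ref{core} and substitute $D(\mu)\le C_\vep\mu^{-\vep}$ into each factor — and your exponent bookkeeping for the product term is correct: the second summand collapses to $C_\vep^{1-\frac{2}{3\cdot 2^N}}\delta^{-\vep+\frac{\vep}{2^N}(\frac{5}{6}+\frac{N}{2}-\frac{4}{3\vep})}$, and positivity of the bracket together with $C_\vep\ge 1$ and $\vep<2/3$ gives $C_\vep^{1-\vep/2^N}\delta^{-\vep}$, just as in the paper. The gap is precisely at the step you flag as ``delicate'': you handle the first summand $C_\vep\delta^{\vep/2^N}\cdot\delta^{-\vep}$ by imposing $C_\vep\le 100^{2^N}$ so that $\delta<100^{-2^N}$ forces $(C_\vep\delta)^{\vep/2^N}\le 1$. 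This normalization cannot be justified at this point. The hypothesis of the lemma admits any $C_\vep$ for which $D(\delta)\le C_\vep\delta^{-\vep}$ holds, and no explicit upper bound on $C_\vep$ is available yet: Section \ref{fc_iter} establishes $D(\delta)\lsm_\vep\delta^{-\vep}$ only by a soft contradiction with no quantitative control on the implied constant, and ``taking $C_\vep$ to be the infimum of admissible constants'' does not help, because the whole purpose of the iteration in Lemmas \ref{expstep2}--\ref{expstep3} is to \emph{prove} that this infimum is controlled — assuming a priori that it is at most $100^{2^N}$ would be circular.

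The paper avoids this with a case split rather than a normalization. If $\delta<C_\vep^{-1}$, then $C_\vep\delta^{\vep/2^N}=C_\vep^{1-\vep/2^N}(C_\vep\delta)^{\vep/2^N}\le C_\vep^{1-\vep/2^N}$ directly, which is what you want. If instead $\delta\ge C_\vep^{-1}$, one abandons Corollary \ref{core} altogether and uses the trivial bound $D(\delta)\le 2^{100/6}\delta^{-1/2}\le 2^{100/6}C_\vep^{1/2}$, which is already below $2\cdot 10^{10^5}C_\vep^{1-\vep/2^N}\delta^{-\vep}$ since $1/2\le 1-\vep/2^N$ and $\delta^{-\vep}\ge 1$. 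This dichotomy on the size of $\delta$ relative to $C_\vep$ is the missing ingredient; your proof does not close without it.
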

\begin{proof}
Inserting $D(\delta) \leq C_{\vep}\delta^{-\vep}$ into Corollary \ref{core} gives that for all integers $N \geq 1$ and $\delta$ such that
$\delta^{-1/2^{N}} \in \N$, $\delta < 100^{-2^{N}}$, we have
\begin{align*}
D(\delta) \leq 10^{10^{5}}(C_{\vep}\delta^{\frac{\vep}{2^{N}}} + C_{\vep}^{1 - \frac{2}{3 \cdot 2^N}}\delta^{\frac{\vep}{2^{N}}(\frac{5}{6} + \frac{N}{2} - \frac{4}{3\vep})})\delta^{-\vep}.
\end{align*}
Thus by our choice of $N$,
\begin{align}\label{exp1}
D(\delta) \leq 10^{10^{5}}(C_{\vep}\delta^{\frac{\vep}{2^{N}}} + C_{\vep}^{1 - \frac{2}{3 \cdot 2^N}})\delta^{-\vep}.
\end{align}
There are two possibilities. If $\delta < C_{\vep}^{-1}$, then since $0 < \vep < 1/100$, \eqref{exp1} becomes
\begin{align}\label{exp2}
D(\delta) \leq 10^{10^{5}}(C_{\vep}^{1 - \frac{\vep}{2^{N}}} + C_{\vep}^{1 - \frac{2}{3\cdot 2^{N}}})\delta^{-\vep} \leq 2\cdot 10^{10^{5}}C_{\vep}^{1 - \frac{\vep}{2^{N}}}\delta^{-\vep}.
\end{align}
On the other hand if $\delta \geq C_{\vep}^{-1}$, the trivial bound gives
\begin{align*}
D(\delta) \leq 2^{100/6}\delta^{-1/2} \leq 2^{100/6}C_{\vep}^{1/2}
\end{align*}
which is bounded above by the right hand side of \eqref{exp2}.
This completes the proof of Lemma \ref{expstep1}.
\end{proof}

Note that Lemma \ref{expstep1} is only true for $\delta$ satisfying $\delta^{-1/2^{N}} \in \N$ and
$\delta < 100^{-2^{N}}$. We now use almost multiplicativity to upgrade the result of Lemma \ref{expstep1} to all $\delta \in \N^{-1}$.
\begin{lemma}\label{expstep2}
Fix arbitrary $0 < \vep < 1/100$ and suppose $D(\delta) \leq C_{\vep}\delta^{-\vep}$ for all $\delta \in \N^{-1}$.
Then
\begin{align*}
D(\delta) \leq 10^{10^6}2^{4\cdot 8^{1/\vep}}C_{\vep}^{1 - \frac{\vep}{8^{1/\vep}}}\delta^{-\vep}
\end{align*}
for all $\delta \in \N^{-1}$.
\end{lemma}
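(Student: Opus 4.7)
The plan is to extend the bound of Lemma~\ref{expstep1}, which holds only for $\delta$ with $\delta^{-1/2^N}\in\N$ and $\delta<100^{-2^N}$, to every $\delta\in\N^{-1}$ by invoking the almost multiplicativity estimate $D(\delta)\leq 10^{20000}D(\sigma)D(\delta/\sigma)$ from Section~\ref{fc}, together with the trivial bound $D(\delta)\leq 2^{100/6}\delta^{-1/2}$.

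First I would fix $N$ to be the largest positive integer satisfying $2^N\leq 8^{1/\vep}$. Since $\vep<1/100$ one readily checks $N>\tfrac{8}{3\vep}-\tfrac{5}{3}$, so the hypothesis of Lemma~\ref{expstep1} is met, and the inequality $2^N\leq 8^{1/\vep}$ combined with $C_\vep\geq 1$ yields $C_\vep^{1-\vep/2^N}\leq C_\vep^{1-\vep/8^{1/\vep}}$. I then split on the size of $\delta$: when $\delta\geq 100^{-2^N}$ the trivial bound gives $D(\delta)\leq 2^{100/6}\delta^{-1/2}\leq 2^{100/6}\cdot 10^{2^N}$, which, using $10<2^4$ and $\delta^{-\vep}\geq 1$, is comfortably absorbed into the prefactor $10^{10^6}2^{4\cdot 8^{1/\vep}}$.

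The interesting case is $\delta<100^{-2^N}$. Here I would select an auxiliary scale $\tilde\delta\in\N^{-1}$ satisfying the hypotheses of Lemma~\ref{expstep1} and close to $\delta$; the natural candidate $\tilde\delta=\lceil\delta^{-1/2^N}\rceil^{-2^N}$ gives $\tilde\delta\leq\delta\leq 2^{2^N}\tilde\delta$ via the elementary estimate $(x+1)^{2^N}\leq 2^{2^N}x^{2^N}$ for $x\geq 1$. I then compare $D(\delta)$ with $D(\tilde\delta)$ by iterated almost multiplicativity, decomposing $\delta$ as a product of the good scale $\tilde\delta$ and residual factors in $\N^{-1}$, each of which is bounded below by a small absolute constant so that the trivial bound contributes only an $O(1)$ factor per step. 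Inserting Lemma~\ref{expstep1} for $D(\tilde\delta)$ and using $\tilde\delta^{-\vep}\leq 2^{2^N\vep}\delta^{-\vep}\leq 2^{4\cdot 8^{1/\vep}}\delta^{-\vep}$ then delivers the stated bound.

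The main obstacle is that almost multiplicativity as formulated requires both $\sigma$ and $\delta/\sigma$ to lie in $\N^{-1}$, i.e.\ the denominator of $\sigma$ must divide $\delta^{-1}$. When $\delta^{-1}$ has little divisor structure (for instance, when it is prime or nearly squarefree), the set of usable splits is restricted, and one must iterate almost multiplicativity carefully along the prime factorization of $\delta^{-1}$, verifying that the accumulated constant $(10^{20000})^r$ from $r$ applications remains absorbed into $10^{10^6}$. This bookkeeping, similar in spirit to the discussion in \cite[Section 2.10]{thesis}, is the technical heart of the argument, and the explicit constants in the statement of Lemma~\ref{expstep2} are calibrated precisely to accommodate such an iteration.
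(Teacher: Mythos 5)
Your strategy matches the paper's: compare $\delta$ to a nearby "good" scale at which Lemma~\ref{expstep1} applies, bridge the gap by one application of almost multiplicativity, and absorb the residual scale with the trivial bound. The paper implements this with $\delta_n := 2^{-2^N n}$ (so $\delta_n^{-1/2^N}=2^n\in\N$ and $\delta_n<100^{-2^N}$ for $n\ge 7$), locating $\delta\in(\delta_{n+1},\delta_n]$ and writing $D(\delta)\le 10^{20000}D(\delta_n)D(\delta/\delta_n)$, then using Lemma~\ref{expstep1} on $D(\delta_n)$ and $D(\delta/\delta_n)\le 2^{100/6}(\delta_n/\delta)^{1/2}\le 2^{100/6}2^{2^{N-1}}$. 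Your proposal is the same idea, but there is a sign error: you set $\tilde\delta=\lceil\delta^{-1/2^N}\rceil^{-2^N}$, which gives $\tilde\delta\le\delta$, whereas almost multiplicativity $D(\delta)\le 10^{20000}D(\sigma)D(\delta/\sigma)$ needs $\sigma$ to be the \emph{coarser} scale $\sigma>\delta$ with $\delta/\sigma\in\N^{-1}$; with $\tilde\delta\le\delta$ the inequality runs the wrong way and yields only a lower bound on $D(\delta)$. You should take $\tilde\delta=\lfloor\delta^{-1/2^N}\rfloor^{-2^N}\ge\delta$ (or the dyadic $\delta_n$), and then slightly enlarge the trivial-case threshold (say to $\delta\ge 200^{-2^N}$) so that in the remaining regime $\tilde\delta\le 2^{2^N}\delta<100^{-2^N}$ still satisfies the hypothesis of Lemma~\ref{expstep1}; both adjustments cost only harmless constants absorbed by $10^{10^6}2^{4\cdot 8^{1/\vep}}$.

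On the divisibility obstruction you flag: you correctly observe that the stated almost multiplicativity requires $\delta/\sigma\in\N^{-1}$, and neither your $\tilde\delta$ nor the paper's $\delta_n$ literally satisfies this for general $\delta\in\N^{-1}$. However, your proposed remedy --- iterating along the prime factorization of $\delta^{-1}$ --- cannot work: when $\delta^{-1}$ is prime there is no nontrivial divisor, hence no usable split at all. The actual resolution, deferred by the paper to \cite[Section~2.10]{thesis}, is to upgrade almost multiplicativity to a form that does not require exact divisibility (e.g.\ by partitioning $[0,1]$ into $O(1/\sigma)$ intervals of length comparable to $\sigma$ and rescaling each); once that version is in hand, a single application at the scale $\delta_n$ suffices, and no iteration over divisors is needed.
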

\begin{proof}
Choose
\begin{align}\label{mchoice3}
N := \lceil \frac{8}{3\vep} - \frac{5}{3}\rceil
\end{align}
and $\delta \in \{2^{-2^{N}n}\}_{n = 7}^{\infty} = \{\delta_{n}\}_{n = 7}^{\infty}$. Then for these $\delta$, $\delta^{-1/2^{N}} \in \N$
and $\delta < 100^{-2^{N}}$. If $\delta \in (\delta_{7}, 1] \cap \N^{-1}$, then
\begin{align*}
D(\delta) \leq 2^{100/6}\delta^{-1/2} \leq 2^{100/6}2^{2^{N-1} \cdot 7}.
\end{align*}
If $\delta \in (\delta_{n + 1}, \delta_{n}]$ for some $n \geq 7$, then almost multiplicativity and Lemma \ref{expstep1} gives that
\begin{align*}
D(\delta) &\leq 10^{20000}D(\delta_n)D(\frac{\delta}{\delta_n})\\
& \leq 10^{20000}(2\cdot 10^{10^{5}}C_{\vep}^{1 - \frac{\vep}{2^{N}}}\delta_{n}^{-\vep})(2^{100/6}(\frac{\delta_n}{\delta})^{1/2})\\
&\leq 10^{10^{6}}2^{2^{N-1}}C_{\vep}^{1 - \frac{\vep}{2^{N}}}\delta^{-\vep}
\end{align*}
where $N$ is as in \eqref{mchoice3} and the second inequality we have used the trivial bound for $D(\delta/\delta_n)$.

Combining both cases above then shows that if $N$ is chosen as in \eqref{mchoice3}, then
\begin{align*}
D(\delta) \leq 10^{10^{6}}2^{7 \cdot 2^{N-1}}C_{\vep}^{1 - \frac{\vep}{2^{N}}}\delta^{-\vep}
\end{align*}
for all $\delta \in \N^{-1}$. Since we are no longer constrained by having $N\in \N$, we can increase $N$ to be $3/\vep$
and so we have that
\begin{align*}
D(\delta) \leq 10^{10^6}2^{4\cdot 8^{1/\vep}}C_{\vep}^{1 - \frac{\vep}{8^{1/\vep}}}\delta^{-\vep}
\end{align*}
for all $\delta \in \N^{-1}$.
This completes the proof of Lemma \ref{expstep2}.
\end{proof}

\begin{lemma}\label{expstep3}
For all $0 < \vep < 1/100$ and all $\delta \in \N^{-1}$, we have
\begin{align*}
D(\delta) \leq 2^{200^{1/\vep}}\delta^{-\vep}.
\end{align*}
\end{lemma}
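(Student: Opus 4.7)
The plan is to extract Lemma \ref{expstep3} as the fixed point of the self-improving inequality provided by Lemma \ref{expstep2}. From Section \ref{fc_iter} we know that for every $\vep \in (0, 1/100)$ there exists some finite $C$ with $D(\delta) \leq C\delta^{-\vep}$ for all $\delta \in \N^{-1}$, so
\[ C^{*}_{\vep} := \inf\{C \geq 0 : D(\delta) \leq C\delta^{-\vep}\text{ for all }\delta \in \N^{-1}\} \]
is finite; letting $C \searrow C^{*}_{\vep}$ shows that $C^{*}_{\vep}$ itself is admissible, i.e.\ $D(\delta) \leq C^{*}_{\vep}\delta^{-\vep}$ for every $\delta \in \N^{-1}$.

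Next I would apply Lemma \ref{expstep2} with $C_{\vep} = C^{*}_{\vep}$. Setting $A_{\vep} := 10^{10^{6}}\, 2^{4\cdot 8^{1/\vep}}$ and $\alpha_{\vep} := \vep/8^{1/\vep}$, this yields $D(\delta) \leq A_{\vep}(C^{*}_{\vep})^{1 - \alpha_{\vep}}\delta^{-\vep}$ for every $\delta$. Minimality of $C^{*}_{\vep}$ then forces the fixed-point inequality $C^{*}_{\vep} \leq A_{\vep}(C^{*}_{\vep})^{1 - \alpha_{\vep}}$, which (assuming $C^{*}_{\vep}>0$, else the target bound is trivial) rearranges to $(C^{*}_{\vep})^{\alpha_{\vep}} \leq A_{\vep}$, i.e.
\[ C^{*}_{\vep} \leq A_{\vep}^{1/\alpha_{\vep}} = \bigl(10^{10^{6}}\, 2^{4\cdot 8^{1/\vep}}\bigr)^{8^{1/\vep}/\vep}. \]

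It then remains to verify the purely numerical estimate $A_{\vep}^{1/\alpha_{\vep}} \leq 2^{200^{1/\vep}}$ for $\vep \in (0, 1/100)$. Writing $T := 8^{1/\vep}$, we have $T \geq 8^{100} > 10^{90}$, which in particular dwarfs $\log_{2}(10^{10^{6}}) \leq 10^{7}$; thus $A_{\vep} \leq 2^{5T}$, and hence $A_{\vep}^{1/\alpha_{\vep}} \leq 2^{5T^{2}/\vep} = 2^{5\cdot 64^{1/\vep}/\vep}$. The desired inequality $5\cdot 64^{1/\vep}/\vep \leq 200^{1/\vep}$ reduces to $5/\vep \leq (25/8)^{1/\vep}$, which holds for all $\vep < 1/100$ since already at $\vep = 1/100$ we have $(25/8)^{100} > 10^{49}$ while $5/\vep = 500$, and the right side grows exponentially in $1/\vep$ while the left side grows only linearly. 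The only real obstacle is bookkeeping these constants carefully; no additional analytic input is required beyond closing the bootstrap already set up by Lemma \ref{expstep2}.
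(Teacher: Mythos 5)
Your proof is correct and is essentially the paper's argument: both use Lemma \ref{expstep2} as a self-improving inequality and extract the bound $C^{*}_{\vep} \le A_{\vep}^{1/\alpha_{\vep}} = (10^{10^{6}}2^{4\cdot 8^{1/\vep}})^{8^{1/\vep}/\vep}$, which is then checked numerically to be $\le 2^{200^{1/\vep}}$ for $\vep < 1/100$. The only cosmetic difference is that the paper iterates the implication $P(C_{\vep},\vep)\Rightarrow P(A_{\vep}C_{\vep}^{1-\alpha_{\vep}},\vep)$ a total of $M$ times and lets $M\to\infty$, whereas you pass directly to the optimal constant $C^{*}_{\vep}$ and use its extremality to close the bootstrap in a single step; these two bookkeeping schemes are equivalent and yield the identical intermediate bound.
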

\begin{proof}
Let $P(C, \ld)$ be the statement that $D(\delta) \leq C\delta^{-\ld}$ for all $\delta \in \N^{-1}$.
Lemma \ref{expstep2} implies that for $\vep \in (0, 1/100)$,
\begin{align*}
P(C_{\vep}, \vep) \implies P(10^{10^6}2^{4\cdot 8^{1/\vep}}C_{\vep}^{1 - \frac{\vep}{8^{1/\vep}}}, \vep).
\end{align*}
Iterating this $M$ times gives that
\begin{align*}
P(C_{\vep}, \vep) \implies P([10^{10^6}2^{4\cdot 8^{1/\vep}}]^{\sum_{j = 0}^{M-1}(1 - \frac{\vep}{8^{1/\vep}})^{j}}C_{\vep}^{(1 - \frac{\vep}{8^{1/\vep}})^M}, \vep).
\end{align*}
Letting $M \rightarrow \infty$ thus gives that for all $0 < \vep < 1/100$,
\begin{align*}
D(\delta) \leq (10^{10^6}2^{4\cdot 8^{1/\vep}})^{8^{1/\vep}/\vep}\delta^{-\vep} \leq 2^{100^{1/\vep}/\vep}\delta^{-\vep} \leq 2^{200^{1/\vep}}\delta^{-\vep}
\end{align*}
for all $\delta \in \N^{-1}$. This completes the proof of Lemma \ref{expstep3}.
\end{proof}

Optimizing in $\vep$ then gives the proof of our main result.
\begin{proof}[Proof of Theorem \ref{ef2d_main}]
Choose $A = (\log_{2}200)(\log\frac{1}{\delta})$, $\eta = \log A - \log\log A$, and $\vep = \frac{1}{\eta}\log 200$.
Note that $\eta\exp(\eta) = A(1 - \frac{\log\log A}{\log A}) \leq A$.
Then from our choice of $\eta, A, \vep$,
$$200^{1/\vep}\log 2 \leq \vep\log\frac{1}{\delta}$$
and hence
\begin{align}\label{optimize_eq1}
2^{200^{1/\vep}}\delta^{-\vep} \leq \exp(2\vep\log\frac{1}{\delta}).
\end{align}
Since $\eta = \log A - \log\log A$, we need to ensure that our choice of $\vep$ is such that $0 < \vep < 1/100$. Thus we need
\begin{align*}
\vep = \frac{\log 200}{\log((\log_{2}200)(\log\frac{1}{\delta})) - \log\log((\log_{2}200)(\log\frac{1}{\delta}))} < \frac{1}{100}.
\end{align*}
Note that for all $x > 0$, $\log\log x < (\log x)^{1/2}$ and hence for all $0 < \delta < e^{-\frac{e^4}{\log_{2}200}}$,
\begin{align}
\log((\log_{2}200)(\log\frac{1}{\delta})) &- \log\log((\log_{2}200)(\log\frac{1}{\delta}))\nonumber\\
& \geq \log((\log_{2}200)(\log\frac{1}{\delta})) - [\log((\log_{2}200)(\log\frac{1}{\delta}))]^{1/2}\nonumber\\
& \geq \frac{1}{2}\log((\log_{2}200)(\log\frac{1}{\delta})) \geq \frac{1}{2}\log\log\frac{1}{\delta}.\label{optimize_eq2}
\end{align}
Thus we need $0 < \delta < e^{-\frac{e^4}{\log_{2}200}}$ to also be such that
\begin{align*}
\frac{2\log 200}{\log\log\frac{1}{\delta}} < \frac{1}{100}
\end{align*}
and hence $\delta < e^{-200^{200}}$. Therefore using \eqref{optimize_eq1} and \eqref{optimize_eq2}, we have that
for $\delta \in (0, e^{-200^{200}}) \cap \N^{-1}$,
\begin{align*}
D(\delta) \leq \exp(30\frac{\log\frac{1}{\delta}}{\log\log\frac{1}{\delta}}).
\end{align*}
This completes the proof of Theorem \ref{ef2d_main}.
\end{proof}

\section{An uncertainty principle interpretation of Lemma \ref{abup}}\label{unc}
We now give a different interpretation of Lemma \ref{abup}, making use of the uncertainty principle.
We will pretend all weight functions $w_B$ are indicator functions $1_B$ in this section and will
make the argument rigorous in the next section. In this section, $B$ will denote an arbitrary square of side length $\delta^{-2}$.

The main point was of Lemma \ref{abup}
was to show that if $1 \leq a \leq 2b$, $\delta$ and $\nu$ such that $\nu^{2b}\delta^{-1} \in \N$,
then
\begin{align}\label{unctar}
\int_{B}|\E_{I_1}g|^{2}|\E_{I_2}g|^{4} \lsm \nu^{-1}\sum_{J \in P_{\nu^{2b}}(I_1)}\int_{B}|\E_{J}g|^{2}|\E_{I_2}g|^{4}
\end{align}
for arbitrary $I_1 \in P_{\nu^{a}}([0, 1])$ and $I_2 \in P_{\nu^{b}}([0, 1])$
such that $d(I_1, I_2) \gtrsim \nu$.
From Lemma \ref{m11iter}, we only need \eqref{unctar} to be true for $1 \leq a \leq b$.
Our goal of this section is to prove (heuristically under the uncertainty principle) the following two statements:
\begin{enumerate}[(I)]
\item For $1 \leq a < b$, $M_{a, b}(\delta, \nu) \lsm M_{b, b}(\delta, \nu)$; in other words
\begin{align}\label{ineq1_unc}
\int_{B}|\E_{I_1}g|^{2}|\E_{I_2}g|^{4} \lsm \sum_{J \in P_{\nu^b}(I_1)}\int_{B}|\E_{J}g|^{2}|\E_{I_2}g|^{4}
\end{align}
for arbitrary $I_1 \in P_{\nu^a}([0, 1])$ and $I_2 \in P_{\nu^b}([0, 1])$ such that $d(I_1, I_2) \gtrsim \nu$.
\item $M_{b, b}(\delta, \nu) \lsm \nu^{-1/6}M_{2b, b}(\delta, \nu)$; in other words
\begin{align}\label{ineq2_unc}
\int_{B}|\E_{I_1}g|^{2}|\E_{I_2}g|^{4} \lsm \nu^{-1}\sum_{J \in P_{\nu^{2b}}(I_1)}\int_{B}|\E_{J}g|^{2}|\E_{I_2}g|^{4}
\end{align}
for arbitrary $I_1, I_2 \in P_{\nu^b}([0, 1])$ such that $d(I_1, I_2) \gtrsim \nu$.
\end{enumerate}
Replacing 4 with $p - 2$ then allows us to generalize to $2 \leq p < 6$.

The particular instance of the uncertainty principle we will use is the following.
Let $I$ be an interval of length $1/R$ with center $c$. Fix an arbitrary $R \times R^2$ rectangle $T$
oriented in the direction $(-2c, 1)$. Heuristically for $x \in T$, $(\E_{I}g)(x)$ behaves like $a_{T, I}e^{2\pi i \om_{T, I}\cdot x}$.
Here the amplitude $a_T$ depends on $g, T$, and $I$ and the phase $\om_{T, I}$ depends on $T$ and $I$.
In particular, $|(\E_{I}g)(x)|$ is
essentially constant on every $R \times R^2$ rectangle oriented in the direction
$(-2c, 1)$.
This also implies that if $\Delta$ is a square of side length $R$, then
$|(\E_{I}g)(x)|$ is essentially constant on $\Delta$ (with constant depending on $\Delta, I, g$) and
$\nms{\E_{I}g}_{L^{p}_{\#}(\Delta)}$ is essentially constant with the same constant independent of $p$.

We introduce two standard tools from \cite{sg, bdg}.
\begin{lemma}[Bernstein's inequality]\label{bern_uw}
Let $I$ be an interval of length $1/R$ and $\Delta$ a square of side length $R$.
If $1 \leq p \leq q < \infty$, then
\begin{align*}
\nms{\E_{I}g}_{L^{q}_{\#}(\Delta)} \lsm \nms{\E_{I}g}_{L^{p}_{\#}(\Delta)}.
\end{align*}
We also have $$\nms{\E_{I}g}_{L^{\infty}(\Delta)} \lsm \nms{\E_{I}g}_{L^{p}_{\#}(\Delta)}.$$
\end{lemma}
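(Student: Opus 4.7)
The plan is to prove Bernstein via the uncertainty principle combined with Young's inequality. The first step is to pin down the Fourier support of $\E_{I}g$: it sits on the arc $\{(\xi,\xi^{2}) : \xi \in I\}$, and because $I \subset [0,1]$ has length $1/R$, this arc is contained in a box of dimensions $C/R \times C/R$. The first coordinate has range $|I| = 1/R$ trivially, and the second has range $|\xi^{2} - \xi_{0}^{2}| = |\xi - \xi_{0}||\xi + \xi_{0}| \leq 2|I|$ on $I$. This is the only place the curvature of the parabola enters.

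Next I would pass from local to global by multiplying by a smooth bump $\phi_{\Delta}$ concentrated on $\Delta$, chosen with $\phi_{\Delta} \geq 1_{\Delta}$ and $\supp \wh{\phi_{\Delta}} \subset B(0, C/R)$; since we are in the heuristic section, treating $\phi_{\Delta}$ as $1_{\Delta}$ is essentially free. The product $(\E_{I}g)\phi_{\Delta}$ then has Fourier support in a slightly enlarged box $B^{*}$ still of size $O(1/R)\times O(1/R)$. Choosing a Schwartz $\chi$ with $\wh{\chi} \equiv 1$ on $B^{*}$ and $\supp \wh{\chi} \subset 2B^{*}$, the reproducing identity $(\E_{I}g)\phi_{\Delta} = ((\E_{I}g)\phi_{\Delta}) \ast \chi$ holds. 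By Fourier scaling, $|\chi(x)| \lsm R^{-2}(1 + |x|/R)^{-N}$ for every $N$, and therefore $\nms{\chi}_{L^{p'}(\R^{2})} \sim R^{-2/p}$.

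Young's inequality then yields
\begin{align*}
\nms{(\E_{I}g)\phi_{\Delta}}_{L^{\infty}(\R^{2})} \leq \nms{(\E_{I}g)\phi_{\Delta}}_{L^{p}(\R^{2})}\nms{\chi}_{L^{p'}(\R^{2})}.
\end{align*}
Since $\phi_{\Delta}$ is essentially $1_{\Delta}$, the $L^{p}$ norm on the right is comparable to $R^{2/p}\nms{\E_{I}g}_{L^{p}_{\#}(\Delta)}$. The two factors $R^{\pm 2/p}$ cancel, giving $\nms{\E_{I}g}_{L^{\infty}(\Delta)} \lsm \nms{\E_{I}g}_{L^{p}_{\#}(\Delta)}$, which is the second assertion. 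The first assertion follows by the trivial bound $\nms{\E_{I}g}_{L^{q}_{\#}(\Delta)} \leq \nms{\E_{I}g}_{L^{\infty}(\Delta)}$.

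The main obstacle, which is the reason the paper confines this section to heuristics, is the handling of the weight in a rigorous version: the Schwartz decay $(1+|x|/R)^{-N}$ of $\chi$ inevitably smears mass outside $\Delta$, so once one is honest about cutoffs the right-hand side must be replaced by $\nms{\E_{I}g}_{L^{p}_{\#}(w_{\Delta})}$. Apart from this book-keeping, no new ideas are required; this upgrade is exactly what the ``rigorous counterparts'' in the next section are responsible for, and does not affect the heuristic argument presented here.
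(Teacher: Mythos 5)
Your argument is correct and is essentially the approach the paper relies on: the paper gives no inline proof but cites \cite[Corollary 4.3]{sg} and \cite[Lemma 2.2.20]{thesis}, whose proofs are exactly this reproducing-kernel/Young's inequality argument (Fourier support of $\E_I g$ in an $O(1/R)\times O(1/R)$ box, multiplication by a bump adapted to $\Delta$ with Fourier support in $B(0,C/R)$, convolution with a Schwartz kernel of $L^{p'}$ norm $\sim R^{-2/p}$). You also correctly flag that an honest version puts the weight $w_\Delta$ on the right-hand side, which is precisely the form (Lemma \ref{loc_bern}) the paper uses when the heuristics of Section \ref{unc} are made rigorous in Section \ref{bik}.
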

\begin{proof}
See \cite[Corollary 4.3]{sg} or \cite[Lemma 2.2.20]{thesis} for a rigorous proof.
\end{proof}
The reverse inequality in the above lemma is just an application of H\"{o}lder's inequality and so ignoring
weight functions, $\nms{\E_{I}g}_{L^{q}_{\#}(\Delta)} \sim \nms{\E_{I}g}_{L^{p}_{\#}(\Delta)}$ for any $1 \leq p, q \leq \infty$.
In other words, $\nms{\E_{I}g}_{L^{p}_{\#}(\Delta)}$ is essentially constant independent of $p$.
Therefore we can view Bernstein's inequality as one instance of the uncertainty principle.

\begin{lemma}[$l^2 L^2$ decoupling]\label{l2l2_uw}
Let $I$ be an interval of length $\geq 1/R$ such that $R|I| \in \N$ and $\Delta$ a square of side length $R$. Then
\begin{align*}
\nms{\E_{I}g}_{L^{2}(\Delta)} \lsm (\sum_{J \in P_{1/R}(I)}\nms{\E_{J}g}_{L^{2}(\Delta)}^{2})^{1/2}.
\end{align*}
\end{lemma}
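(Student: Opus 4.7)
The plan is the standard Plancherel/orthogonality argument for the parabola: dominate $1_{\Delta}$ by a nonnegative Schwartz bump with narrowly concentrated Fourier transform, then exploit that $(\E_{J_{1}}g)\ov{(\E_{J_{2}}g)}$ has Fourier support bounded away from the origin unless $J_{1}, J_{2}$ are close, so only $O(1)$ near-diagonal pairs contribute.

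More precisely, I would fix a nonnegative Schwartz function $\psi$ with $\psi \geq 1_{\Delta}$, $\psi \lsm w_{\Delta}$, and $\supp(\wh{\psi}) \subset B(0, c/R)$ for a sufficiently small absolute constant $c > 0$ (for instance an appropriate dilate of the $\eta$ in the paper's setup, translated to $\Delta$). Decomposing $\E_{I}g = \sum_{J \in P_{1/R}(I)}\E_{J}g$ and expanding the square gives
\begin{align*}
\nms{\E_{I}g}_{L^{2}(\Delta)}^{2} \leq \int|\E_{I}g|^{2}\psi = \sum_{J_{1}, J_{2} \in P_{1/R}(I)}\int(\E_{J_{1}}g)\ov{(\E_{J_{2}}g)}\psi.
\end{align*}
The Fourier transform of $(\E_{J_{1}}g)\ov{(\E_{J_{2}}g)}$ is supported in $\{(\xi_{1} - \xi_{2}, \xi_{1}^{2} - \xi_{2}^{2}): \xi_{i} \in J_{i}\}$, so by Plancherel the summand vanishes unless this set meets $B(0, c/R)$; for $J_{1}, J_{2}$ of length $1/R$ the first coordinate constraint $|\xi_{1} - \xi_{2}| \leq c/R$ is the binding one and forces $d(J_{1}, J_{2}) \lsm 1/R$, leaving only $O(1)$ neighbors of any given $J_{1}$.

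For these near-diagonal pairs I would apply AM-GM, $|\int(\E_{J_{1}}g)\ov{\E_{J_{2}}g}\psi| \leq \tfrac{1}{2}\int(|\E_{J_{1}}g|^{2} + |\E_{J_{2}}g|^{2})\psi$, followed by $\psi \lsm w_{\Delta}$ to collapse the remaining double sum to $\lsm \sum_{J}\nms{\E_{J}g}_{L^{2}(w_{\Delta})}^{2}$. The only mildly delicate point is that the Schwartz tails of $\psi$ produce $L^{2}(w_{\Delta})$ rather than $L^{2}(\Delta)$ on the right side; this is consistent with the heuristic convention announced at the start of Section \ref{unc} (where the uncertainty principle lets us treat $1_{\Delta}$ and $w_{\Delta}$ interchangeably) and will be handled systematically by the weight machinery cited in Section 1.1 when the argument is made rigorous in the next section. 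The main work is just the Fourier-support bookkeeping, which is routine; no decoupling input beyond Plancherel is needed here.
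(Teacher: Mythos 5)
Your argument is correct and is essentially the standard near-orthogonality proof that the paper relies on: the paper does not prove this lemma itself but cites \cite[Proposition 6.1]{sg} and \cite[Lemma 2.2.21]{thesis}, where exactly this Plancherel/Fourier-support argument (dominate $1_{\Delta}$ by a bump with $\wh{\psi}$ supported in a ball of radius $\sim 1/R$, note only $O(1)$ near-diagonal pairs $J_1, J_2$ survive, then AM--GM) is carried out, with the expected caveat that the right-hand side naturally comes out weighted by $w_{\Delta}$, matching the rigorous weighted formulation used later in Section \ref{bik}.
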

\begin{proof}
See \cite[Proposition 6.1]{sg} or \cite[Lemma 2.2.21]{thesis} for a rigorous proof.
\end{proof}

The first inequality \eqref{ineq1_unc} is an immediate application of the uncertainty principle and $l^2 L^2$ decoupling.
\begin{lemma}\label{mglem2}
Suppose $1 \leq a < b$ and $\delta$ and $\nu$ were such that $\nu^{b}\delta^{-1} \in \N$.
Then
\begin{align*}
\int_{B}|\E_{I_1}g|^{2}|\E_{I_2}g|^{4} \lsm \sum_{J \in P_{\nu^{b}}(I_1)}\int_{B}|\E_{J}g|^{2}|\E_{I_2}g|^{4}
\end{align*}
for arbitrary $I_1 \in P_{\nu^{a}}([0, 1])$ and $I_2 \in P_{\nu^{b}}([0, 1])$
such that $d(I_1, I_2) \gtrsim \nu$.
In other words, $M_{a, b}(\delta, \nu) \lsm M_{b, b}(\delta, \nu).$
\end{lemma}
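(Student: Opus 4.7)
The plan is to exploit the mismatch between the frequency scales $\nu^a$ and $\nu^b$ with $\nu^a > \nu^b$: the function $|\E_{I_2}g|$ has frequency support of width $\nu^b$, so by the uncertainty principle it is essentially constant on squares of side length $\nu^{-b}$, while on each such square the factor $\E_{I_1}g$, whose frequency width $\nu^a$ is larger than $\nu^b$, can be $l^2 L^2$-decoupled at the matched reciprocal scale $\nu^b$. Notice in particular that, unlike in Lemma \ref{abup}, the separation condition $d(I_1,I_2) \gtrsim \nu$ is not used in this step; the proof is purely scale-based.

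Concretely, I would first partition the square $B$ of side $\delta^{-2}$ into subsquares $\Delta$ of side length $\nu^{-b}$; this is an honest partition because $\nu^b \delta^{-1} \in \N$ forces $\delta^{-2}/\nu^{-b} \in \N$. On each $\Delta$, invoking the heuristic uncertainty principle (treating $w_\Delta$ as $1_\Delta$, as the section permits), $|\E_{I_2}g|$ is essentially a constant $c_\Delta$, so
\begin{align*}
\int_{\Delta}|\E_{I_1}g|^{2}|\E_{I_2}g|^{4} \approx c_{\Delta}^{4}\int_{\Delta}|\E_{I_1}g|^{2}.
\end{align*}
Next I would apply Lemma \ref{l2l2_uw} ($l^2 L^2$ decoupling) to $\E_{I_1}g$ at frequency scale $\nu^b = 1/\nu^{-b}$, which is the reciprocal of the side length of $\Delta$, to obtain
\begin{align*}
\int_{\Delta}|\E_{I_1}g|^{2} \lsm \sum_{J \in P_{\nu^{b}}(I_{1})}\int_{\Delta}|\E_{J}g|^{2}.
\end{align*}
Reinserting the essentially constant factor $c_\Delta^4 \approx |\E_{I_2}g|^4$ and then summing over the $\Delta$ partitioning $B$ yields the claimed estimate.

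The only subtle point is the first step: invoking the uncertainty principle to treat $|\E_{I_2}g|$ as locally constant on squares $\Delta$ of side $\nu^{-b}$. This is where the pretence of this section (weights equal indicators) is doing genuine work, since a rigorous version must contend with the fact that $\E_{I_2}g$ is only morally, not exactly, constant on each $\Delta$, and that the relevant weight is a Schwartz $w_B$ rather than $1_B$. This is exactly the subtlety that the paper promises to clean up in Section \ref{bik} by redefining the bilinear constant with weights $w_\Delta$ and using the convolution estimates $w_\Delta * w_\Delta \lsm \nu^{-2b} w_\Delta$ recalled in the introduction; at the heuristic level of the present section, the argument above is complete.
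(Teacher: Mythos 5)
Your proof is essentially identical to the paper's: partition $B$ into $\nu^{-b}$-squares $\Delta$, use the uncertainty principle to freeze $|\E_{I_2}g|$ on each $\Delta$, and apply $l^2 L^2$ decoupling (Lemma~\ref{l2l2_uw}) to $\E_{I_1}g$ at the matched scale $\nu^b$. Your added remark that the $\nu$-separation of $I_1$ and $I_2$ plays no role here is correct and worth noting.
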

\begin{proof}
It suffices to show that for
each $\Delta' \in P_{\nu^{-b}}(B)$, we have
\begin{align*}
\int_{\Delta'}|\E_{I_1}g|^{2}|\E_{I_2}g|^{4} \lsm \sum_{J \in P_{\nu^{b}}(I_1)}\int_{\Delta'}|\E_{J}g|^{2}|\E_{I_2}g|^{4}.
\end{align*}
Since $I_2$ is an interval of length $\nu^b$, $|\E_{I_2}g|$ is essentially constant on $\Delta'$. Therefore the above reduces
to showing
\begin{align*}
\int_{\Delta'}|\E_{I_1}g|^{2} \lsm \sum_{J \in P_{\nu^{b}}(I_1)}\int_{\Delta'}|\E_{J}g|^{2}
\end{align*}
which since $a < b$ and $I_1$ is of length $\nu^a$ is just an application of $l^2 L^2$ decoupling. This completes the proof of Lemma \ref{mglem2}.
\end{proof}

Inequality \eqref{ineq2_unc} is a consequence of the following ball inflation lemma
which is reminiscent of the ball inflation in the Bourgain-Demeter-Guth proof of Vinogradov's mean value theorem.
The main point of this lemma is to increase the spatial scale so we can apply $l^2 L^2$ decoupling while keeping the frequency scales
constant.
\begin{lemma}[Ball inflation]\label{ef2d_ball}
Let $b \geq 1$ be a positive integer.
Suppose $I_1$ and $I_2$ are intervals of length $\nu^b$ with $d(I_1, I_2) \gtrsim \nu$. Then for any square $\Delta'$ of side length $\nu^{-2b}$,
we have
\begin{align*}
\avg{\Delta \in P_{\nu^{-b}}(\Delta')}\nms{\E_{I_1}g}_{L^{2}_{\#}(\Delta)}^{2}\nms{\E_{I_2}g}_{L^{4}_{\#}(\Delta)}^{4} \lsm \nu^{-1}\nms{\E_{I_1}g}_{L^{2}_{\#}(\Delta')}^{2}\nms{\E_{I_2}g}_{L^{4}_{\#}(\Delta')}^{4}.
\end{align*}
\end{lemma}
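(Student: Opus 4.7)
The plan is to apply the uncertainty principle to both $\E_{I_1}g$ and $\E_{I_2}g$, reducing the inequality to an incidence count between two transverse families of tubes inside $\Delta'$.

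By the uncertainty principle, for $j \in \{1, 2\}$ the function $|\E_{I_j}g|$ is essentially constant on each tube $T$ of dimensions $\nu^{-b} \times \nu^{-2b}$ oriented in the direction $(-2c_j, 1)$, where $c_j$ is the center of $I_j$; denote this constant by $A_T$. Since a square $\Delta \in P_{\nu^{-b}}(\Delta')$ has side equal to the tube width, it is contained in $O(1)$ tubes from each family, so $|\E_{I_1}g|$ and $|\E_{I_2}g|$ are essentially constant on $\Delta$ with values $A_{T_1(\Delta)}$ and $A_{T_2(\Delta)}$ for essentially unique tubes $T_j(\Delta)$. Hence $\nms{\E_{I_1}g}_{L^{2}_{\#}(\Delta)}^{2}\nms{\E_{I_2}g}_{L^{4}_{\#}(\Delta)}^{4} \approx A_{T_1(\Delta)}^{2}A_{T_2(\Delta)}^{4}$, and the left hand side of the inequality is comparable to
\[
\avg{\Delta \in P_{\nu^{-b}}(\Delta')} A_{T_1(\Delta)}^{2} A_{T_2(\Delta)}^{4}.
\]

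The geometric heart of the argument is transversality: because $d(I_1, I_2) \gtrsim \nu$, the directions $(-2c_1, 1)$ and $(-2c_2, 1)$ make an angle $\theta \gtrsim \nu$. Consequently, each non-empty intersection $T_1 \cap T_2$ is a parallelogram of area $\lsm \nu^{-b}\cdot \nu^{-b}/\sin\theta \lsm \nu^{-2b-1}$, which contains at most $\nu^{-2b-1}/\nu^{-2b} = \nu^{-1}$ squares from $P_{\nu^{-b}}(\Delta')$. Regrouping the sum by tube pairs and then discarding the incidence constraint gives
\[
\sum_{\Delta \in P_{\nu^{-b}}(\Delta')} A_{T_1(\Delta)}^{2} A_{T_2(\Delta)}^{4} \lsm \nu^{-1}\bigg(\sum_{T_1} A_{T_1}^{2}\bigg)\bigg(\sum_{T_2} A_{T_2}^{4}\bigg).
\]

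Finally, writing $\int_{\Delta'}|\E_{I_j}g|^{p} \approx \sum_{T_j}A_{T_j}^{p}|T_j \cap \Delta'|$ with $|T_j \cap \Delta'| \approx \nu^{-3b}$ gives $\sum_{T_j}A_{T_j}^{p} \approx \nu^{-b}\nms{\E_{I_j}g}_{L^{p}_{\#}(\Delta')}^{p}$, and combining with $|P_{\nu^{-b}}(\Delta')| = \nu^{-2b}$ yields exactly the claimed $\nu^{-1}$ loss. The step I expect to require the most care is checking that the parallelogram $T_1 \cap T_2$ actually realizes the area $\nu^{-2b-1}$: this requires its extent $\nu^{-b}/\sin\theta \approx \nu^{-b-1}$ along each tube direction to fit within the tube length $\nu^{-2b}$, i.e., $\nu^{-b-1} \leq \nu^{-2b}$, which is exactly the hypothesis $b \geq 1$. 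Making the ``essentially constant'' heuristic quantitative and controlling the boundary tubes that only partially meet $\Delta'$, which the paper defers to later sections via the weight formalism, are the remaining obstacles to a fully rigorous version.
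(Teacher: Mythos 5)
Your argument is essentially the same as the paper's heuristic proof of Lemma~\ref{ef2d_ball}: both replace $|\E_{I_1}g|$ and $|\E_{I_2}g|$ by step functions constant on $\nu^{-b}\times\nu^{-2b}$ tubes via the uncertainty principle, invoke the transversality bound $|T_1\cap T_2|\lsm\nu^{-1-2b}$, and run the same tube-counting arithmetic to land on the factor $\nu^{-1}$. (A minor quibble: you only need $|T_1\cap T_2|\lsm\nu^{-1-2b}$ as an \emph{upper} bound, so the check that the parallelogram ``fits'' inside the tube is not needed for the inequality — when $b<1$ the trivial area bound $\nu^{-3b}$ would just give something smaller than $\nu^{-1}$ — though it correctly explains why $b\geq 1$ is where the $\nu^{-1}$ loss is genuinely attained.)
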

\begin{proof}
The uncertainty principle implies that $|\E_{I_1}g|$ and $|\E_{I_2}g|$ are essentially constant on $\Delta$.
Therefore we essentially have
\begin{align*}
\avg{\Delta \in P_{\nu^{-b}}(\Delta')}\nms{\E_{I_1}g}_{L^{2}_{\#}(\Delta)}^{2}\nms{\E_{I_2}g}_{L^{4}_{\#}(\Delta)}^{4} &\sim \frac{1}{|P_{\nu^{-b}}(\Delta')|}\sum_{\Delta \in P_{\nu^{-b}}(\Delta')}\frac{1}{|\Delta|}\int_{\Delta}|\E_{I_1}g|^{2}|\E_{I_2}g|^{4}\\
&= \frac{1}{|\Delta'|}\int_{\Delta'}|\E_{I_1}g|^{2}|\E_{I_2}g|^{4}.
\end{align*}

Cover $\Delta'$ by disjoint rectangles $\{T_1\}$ of size $\nu^{-b} \times \nu^{-2b}$ pointing in the direction
$(-2c_{I_1}, 1)$ where $c_{I_1}$ is the center of $I_!$. Similarly form the collection of $\nu^{-b} \times \nu^{-2b}$
rectangles $\{T_2\}$ corresponding to $I_2$. From the uncertainty principle,
$|\E_{I_1}g| \sim \sum_{T_1}|a_{T_1}|1_{T_1}$ and $|\E_{I_2}g| \sim \sum_{T_2}|a_{T_2}|1_{T_2}$
for some constants $|a_{T_i}|$ depending on $T_i, g$, and $\Delta'$.

Since $I_1$ and $I_2$ are $O(\nu)$-separated, for any two tubes $T_1, T_2$ corresponding to $I_1, I_2$, we have
$|T_1 \cap T_2| \lsm \nu^{-1 - 2b}$.
Therefore
\begin{align*}
\frac{1}{|\Delta'|}\int_{\Delta'}|\E_{I_1}g|^{2}|\E_{I_2}g|^{4} \lsm \nu^{-1}\frac{\nu^{-2b}}{|\Delta'|}\sum_{T_1, T_2} |a_{T_1}|^{2}|a_{T_2}|^{4}.
\end{align*}
Since
\begin{align*}
\nms{\E_{I_1}g}_{L^{2}_{\#}(\Delta')}^{2}\nms{\E_{I_2}g}_{L^{4}_{\#}(\Delta')}^{4} \sim \frac{\nu^{-6b}}{|\Delta'|^2}\sum_{T_1, T_2}|a_{T_1}|^{2}|a_{T_2}|^{4}
\end{align*}
and $|\Delta'| = \nu^{-4b}$, this completes the proof of Lemma \ref{ef2d_ball}.
\end{proof}

We now prove inequality \eqref{ineq2_unc}.
\begin{lemma}\label{mglem1}
Suppose $\delta$ and $\nu$ were such that $\nu^{2b}\delta^{-1} \in \N$. Then
\begin{align*}
\int_{B}|\E_{I_1}g|^{2}|\E_{I_2}g|^{4} \lsm \nu^{-1}\sum_{J \in P_{\nu^{2b}}(I_1)}\int_{B}|\E_{J}g|^{2}|\E_{I_2}g|^{4}
\end{align*}
for arbitrary $I_1 \in P_{\nu^{b}}([0, 1])$ and $I_2 \in P_{\nu^{b}}([0, 1])$
such that $d(I_1, I_2) \gtrsim \nu$.
In other words, $M_{b, b}(\delta, \nu) \lsm \nu^{-1/6}M_{2b, b}(\delta, \nu).$
\end{lemma}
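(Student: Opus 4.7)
The plan is to prove the pointwise-type inequality
$\int_B |\E_{I_1}g|^2 |\E_{I_2}g|^4 \lsm \nu^{-1} \sum_{J \in P_{\nu^{2b}}(I_1)} \int_B |\E_J g|^2 |\E_{I_2}g|^4$
directly using the uncertainty principle heuristics together with the ball inflation lemma (Lemma \ref{ef2d_ball}) and $l^2 L^2$ decoupling (Lemma \ref{l2l2_uw}). The statement $M_{b,b} \lsm \nu^{-1/6}M_{2b,b}$ then follows by inserting the definition of $M_{2b,b}$ into each summand on the right, interchanging the $J$-sum with the $\tilde J\in P_\delta(J)$-sum to reassemble $\sum_{\tilde J \in P_\delta(I_1)} \nms{\E_{\tilde J}g}_{L^6(w_B)}^2$, and taking sixth roots.

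First, I would partition $B$ into squares $\Delta' \in P_{\nu^{-2b}}(B)$ (allowable since $\delta \leq \nu^{2b}$), and within each $\Delta'$ further subdivide into $\Delta \in P_{\nu^{-b}}(\Delta')$. Since $I_1, I_2$ both have length $\nu^b$, the uncertainty principle says $|\E_{I_i}g|$ is essentially constant on each $\Delta$, so that
\begin{align*}
\int_B |\E_{I_1}g|^2 |\E_{I_2}g|^4
\sim \sum_{\Delta' \in P_{\nu^{-2b}}(B)} |\Delta'|\avg{\Delta \in P_{\nu^{-b}}(\Delta')}\nms{\E_{I_1}g}_{L^2_\#(\Delta)}^2 \nms{\E_{I_2}g}_{L^4_\#(\Delta)}^4.
\end{align*}
Apply ball inflation (Lemma \ref{ef2d_ball}) to each $\Delta'$ to absorb the average at the cost of $\nu^{-1}$, producing
\begin{align*}
\lsm \nu^{-1}\sum_{\Delta'} |\Delta'|\nms{\E_{I_1}g}_{L^2_\#(\Delta')}^2 \nms{\E_{I_2}g}_{L^4_\#(\Delta')}^4.
\end{align*}

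Next, on each $\Delta'$ of side $\nu^{-2b}$, the dual frequency scale is $\nu^{2b}$, so $l^2 L^2$ decoupling (Lemma \ref{l2l2_uw}) applied to the length-$\nu^b$ interval $I_1$ gives
$\nms{\E_{I_1}g}_{L^2_\#(\Delta')}^2 \lsm \sum_{J \in P_{\nu^{2b}}(I_1)} \nms{\E_J g}_{L^2_\#(\Delta')}^2.$
Substituting, we arrive at
\begin{align*}
\int_B |\E_{I_1}g|^2 |\E_{I_2}g|^4 \lsm \nu^{-1}\sum_{J \in P_{\nu^{2b}}(I_1)} \sum_{\Delta'} |\Delta'| \nms{\E_J g}_{L^2_\#(\Delta')}^2 \nms{\E_{I_2}g}_{L^4_\#(\Delta')}^4.
\end{align*}

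Finally, I would run the uncertainty principle in reverse, but now in its favorable direction: since each $J \in P_{\nu^{2b}}(I_1)$ has length $\nu^{2b}$, $|\E_J g|$ is essentially constant on $\Delta'$, which lets me pull $\nms{\E_J g}_{L^2_\#(\Delta')}^2$ inside the integral to rewrite $|\Delta'|\nms{\E_J g}_{L^2_\#(\Delta')}^2 \nms{\E_{I_2}g}_{L^4_\#(\Delta')}^4 \sim \int_{\Delta'} |\E_J g|^2 |\E_{I_2}g|^4$. (Notice that it is critical that only $\E_J g$ needs to be constant on the larger square $\Delta'$; the factor $|\E_{I_2}g|^4$ remains inside the integral, so its lack of constancy on $\Delta'$ causes no trouble.) Summing over $\Delta'$ reassembles the integral over $B$, yielding the claim. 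The main obstacle is the bookkeeping in the final step, verifying that the $L^4$-factor involving $\E_{I_2}g$ does not require any additional decoupling and is handled purely by integrating pointwise on $\Delta'$; the rigorous counterpart Lemma \ref{mglem1_rig} will then require replacing these heuristic constancies by weighted inequalities as in the previous section.
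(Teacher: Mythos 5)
Your proposal is correct and follows essentially the same route as the paper: pass to local $L^2_\#\times L^4_\#$ averages at scale $\nu^{-b}$ (the paper does this via Bernstein's inequality applied to the $\E_{I_2}g$ factor, you via the two-sided uncertainty heuristic, but these are morally identical in this heuristic section), then apply ball inflation on each $\Delta'\in P_{\nu^{-2b}}(B)$, then $l^2L^2$ decoupling of $I_1$ at scale $\nu^{2b}$, and finally reverse the uncertainty step using only the local constancy of $|\E_J g|$ on $\Delta'$. Your closing remark about why only $\E_J g$ needs to be constant on $\Delta'$ matches the paper's treatment exactly.
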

\begin{proof}
This is an application of ball inflation, $l^2 L^2$ decoupling, Bernstein's inequality, and the uncertainty principle.
Since $\nu^{2b}\delta^{-1} \in \N$, $\nu^{b}\delta^{-1} \in \N$ and $\delta \leq \nu^{2b}$.
Fix arbitrary $I_1, I_2 \in P_{\nu^{b}}([0, 1])$.
We have
\begin{align}\label{mgeq2}
\frac{1}{|B|}\int_{B}|\E_{I_1}g|^{2}|\E_{I_2}g|^{4} &= \frac{1}{|B|}\sum_{\Delta \in P_{\nu^{-b}}(B)}\int_{\Delta}|\E_{I_1}g|^{2}|\E_{I_2}g|^{4}\nonumber\\
&\leq \frac{1}{|B|}\sum_{\Delta \in P_{\nu^{-b}}(B)}(\int_{\Delta}|\E_{I_1}g|^{2})\nms{\E_{I_2}g}_{L^{\infty}(\Delta)}^{4}\nonumber\\
&\lsm \frac{1}{|P_{\nu^{-b}}(B)|}\sum_{\Delta \in P_{\nu^{-b}}(B)}(\frac{1}{|\Delta|}\int_{\Delta}|\E_{I_1}g|^{2})\nms{\E_{I_2}g}_{L^{4}_{\#}(\Delta)}^{4}\nonumber\\
&= \avg{\Delta \in P_{\nu^{-b}}(B)}\nms{\E_{I_1}g}_{L^{2}_{\#}(\Delta)}^{2}\nms{\E_{I_2}g}_{L^{4}_{\#}(\Delta)}^{4}
\end{align}
where the second inequality is because of Bernstein's inequality. From ball inflation we know that for each $\Delta' \in P_{\nu^{-2b}}(B)$,
\begin{align*}
\avg{\Delta \in P_{\nu^{-2b}}(\Delta')}\nms{\E_{I_1}g}_{L^{2}_{\#}(\Delta)}^{2}\nms{\E_{I_2}g}_{L^{4}_{\#}(\Delta)}^{4} \lsm \nu^{-1}\nms{\E_{I_1}g}_{L^{2}_{\#}(\Delta')}^{2}\nms{\E_{I_2}g}_{L^{4}_{\#}(\Delta')}^{4}.
\end{align*}
Averaging the above over all $\Delta' \in P_{\nu^{-2b}}(B)$ shows that \eqref{mgeq2} is
\begin{align*}
\lsm \nu^{-1}\avg{\Delta' \in P_{\nu^{-2b}}(B)}\nms{\E_{I_1}g}_{L^{2}_{\#}(\Delta')}^{2}\nms{\E_{I_2}g}_{L^{4}_{\#}(\Delta')}^{4}.
\end{align*}
Since $I_1$ is of length $\nu^{b}$, $l^2 L^2$ decoupling gives that the above is
\begin{align*}
&\lsm \nu^{-1}\sum_{J \in P_{\nu^{2b}}(I_1)}\avg{\Delta' \in P_{\nu^{-2b}}(B)}\nms{\E_{J}g}_{L^{2}_{\#}(\Delta')}^{2}\nms{\E_{I_2}g}_{L^{4}_{\#}(\Delta')}^{4}\\
&= \nu^{-1}\frac{1}{|B|}\sum_{J \in P_{\nu^{2b}}(I_1)}\sum_{\Delta' \in P_{\nu^{-2b}}(B)}\nms{\E_{I_2}g}_{L^{4}(\Delta')}^{4}\nms{\E_{J}g}_{L^{2}_{\#}(\Delta')}^{2}\\
&= \nu^{-1}\frac{1}{|B|}\sum_{J \in P_{\nu^{2b}}(I_1)}\sum_{\Delta' \in P_{\nu^{-2b}}(B)}(\int_{\Delta'}|\E_{I_2}g|^{4})\nms{\E_{J}g}_{L^{2}_{\#}(\Delta')}^{2}.
\end{align*}
Since $|\E_{J}g|$ is essentially constant on $\Delta'$, the uncertainty principle gives that essentially we have
$$(\int_{\Delta'}|\E_{I_2}g|^{4})\nms{\E_{J}g}_{L^{2}_{\#}(\Delta')}^{2} \sim \int_{\Delta'}|\E_{J}g|^{2}|\E_{I_2}g|^{4}.$$
Combining the above two centered equations then completes the proof of Lemma \ref{mglem1}.
\end{proof}
\begin{rem}
The proof of Lemma \ref{mglem1} is reminiscent of our proof of Lemma \ref{abup}. The $\nms{\E_{I_2}g}_{L^{\infty}(\Delta)}$
can be thought as using the trivial bound for $\xi_i$, $i = 2, 3, 5, 6$ to obtain \eqref{f3}. Then we apply some data
about separation, much like in ball inflation here to get large amounts of cancelation.
\end{rem}

\begin{rem}
After the submission of this manuscript, the author along with Shaoming Guo, Po-Lam Yung, and Pavel Zorin-Kranich were able to interpret Wooley's nested efficient congruencing paper \cite{nested}
in terms of decoupling which gave a new rather short proof of $l^2$ decoupling for the moment curve in $\R^k$ \cite{glyzk}. Restricting our paper to $k = 2$ gives a
third proof of Lemma \ref{mglem1} that just uses Plancherel's theorem. In \cite{glyzk} we use the Fourier supported in a neighborhood formulation of decoupling. In what follows we give a heuristic sketch
of the argument using the formulation of decoupling with an extension operator.
See \cite{glyzk} or \cite[Proposition 19]{taonotes} for a rigorous proof. One can also make the argument rigorous using the methods in \cite{guoliyung}.

By affine invariance of the parabola, we may assume that $I_2 = [-\nu^{b}/2, \nu^{b}/2]$ and $I_1 = [d, d + \nu^a]$ where $d \gtrsim \nu$.
From the uncertainty principle, since $I_2 = [-\nu^{b}/2, \nu^{b}/2]$, $|(\E_{I_2}g)(x)|$ is essentially constant on any vertical
$\nu^{-b} \times \nu^{-2b}$ rectangle.
Partition $B$ into vertical $\nu^{-b} \times \nu^{-2b}$ rectangles $\Box$. It suffices to prove that
for each $\Box$, we have
\begin{align*}
\int_{\Box}|\E_{I_1}g|^{2}|\E_{I_2}g|^{4} \lsm \nu^{-1}\sum_{J \in P_{\nu^{2b}}(I_1)}\int_{\Box}|\E_{J}g|^{2}|\E_{I_2}g|^{4}.
\end{align*}
Since $|\E_{I_2}g|$ is essentially constant on $\Box$ and appears on both sides, it suffices to prove
\begin{align*}
\int_{\Box}|\E_{I_1}g|^{2} \lsm \nu^{-1}\sum_{J \in P_{\nu^{2b}}(I_1)}\int_{\Box}|\E_{J}g|^{2}.
\end{align*}
We may assume that $\Box = [0, \nu^{-b}] \times [0, \nu^{-2b}]$. It is enough to prove that
each fixed $x \in [0, \nu^{-b}]$, we have
\begin{align}\label{remarkplan1}
\int_{0}^{\nu^{-2b}}|(\E_{I}g)(x, y)|^{2}\, dy \lsm \nu^{-1}\sum_{J \in P_{\nu^{2b}}(I_1)}\int_{0}^{\nu^{-2b}}|(\E_{J}g)(x, y)|^{2}\, dy.
\end{align}
We claim that this follows from Plancherel's theorem.
Observe that
$|(\E_{I}g)(x, y)| = |\int_{I}g(\xi)e(\xi x)e(\xi^2 y)\, d\xi| = |\int_{d^2}^{(d + \nu^a)^{2}}G_{x}(\eta)e(\eta y)\, d\eta|$
for $G_{x}(\eta) := \frac{1}{2\sqrt{\eta}}g(\sqrt{\eta})e(\sqrt{\eta}x)$.
Let $\mc{P}([d^2, (d + \nu^a)^2])$ be the partition of this interval into intervals $[d^2, (d + \nu^{2b})^2]$, $[(d + \nu^{2b})^2, (d + 2\nu^{2b})^2]$, $[(d + 2\nu^{2b})^{2}, (d + 3\nu^{2b})^2]$, etc.
Let $\psi_{[0, \nu^{-2b}]}$ be a Schwartz function such that $\psi_{[0, \nu^{-2b}]} \geq 1_{[0, \nu^{-2b}]}$ and $\supp(\wh{\psi}_{[0, \nu^{-2b}]}) \subset [-\nu^{2b}/2, \nu^{2b}/2]$.
Then by Plancherel's theorem,
\begin{align*}
\int_{0}^{\nu^{-2b}}|(\E_{I}g)(x, y)|^{2}\, dy &= \int_{0}^{\nu^{-2b}}|\int_{d^2}^{(d + \nu^a)^{2}}G_{x}(\eta)e(\eta y)\, d\eta|^{2}\, dy\\
&= \int_{\R}|\sum_{J \in \mc{P}}\widecheck{G_x 1_{J}}(y)\psi_{[0, \nu^{-2b}]}(y)|^{2}\, dy\\
&= \int_{\R}|\sum_{J \in \mc{P}}G_{x}1_{J} \ast \wh{\psi}_{[0, \nu^{-2b}]}|^{2}.
\end{align*}
Since the $|J| = 2d\nu^{2b} + O(\nu^{2b + a})$, the $G_{x}1_{J} \ast \wh{\psi}_{[0, \nu^{-2b}]}$ have almost pairwise disjoint support, and so the above is (essentially)
\begin{align*}
\lsm \nu^{-1}\sum_{J \in \mc{P}}\int_{0}^{\nu^{-2b}}|\widecheck{G_x 1_{J}}(y)|^{2}\, dy =\nu^{-1}\sum_{J \in P_{\nu^{2b}}(I)}\int_{0}^{\nu^{-2b}}|(\E_{J}g)(x, y)|^{2}\, dy.
\end{align*}
This proves \eqref{remarkplan1} and hence proves Lemma \ref{mglem1}.
\end{rem}

\section{An alternate proof of $D(\delta) \lsm_{\vep} \delta^{-\vep}$}\label{bik}
The ball inflation lemma and our proof of Lemma \ref{mglem1} inspire us to define a new bilinear decoupling
constant that can make our uncertainty principle heuristics from the previous section rigorous.

The left hand side of the definition of $D(\delta)$ in \eqref{decdef} is unweighted, however observe that Corollary \ref{up} implies that
\begin{align}\label{wlhs}
\nms{\E_{[0, 1]}g}_{L^{6}(w_{B})} \lsm D(\delta)(\sum_{J \in P_{\delta}([0, 1])}\nms{\E_{J}g}_{L^{6}(w_{B})}^{2})^{1/2}.
\end{align}
for all $g: [0, 1] \rightarrow \C$ and squares $B$ of side length $\delta^{-2}$.

We will assume that $\delta^{-1} \in \N$ and $\nu \in \N^{-1} \cap (0, 1/100)$.
Let $\mc{M}_{a, b}(\delta, \nu)$ be the best constant such that
\begin{align}\label{bik_const}
\begin{aligned}
\avg{\Delta \in P_{\nu^{-\max(a, b)}}(B)}&\nms{\E_{I}g}_{L^{2}_{\#}(w_{\Delta})}^{2}\nms{\E_{I'}g}_{L^{4}_{\#}(w_{\Delta})}^{4}\\
&\hspace{-0.1in} \leq \mc{M}_{a, b}(\delta, \nu)^{6}(\sum_{J \in P_{\delta}(I)}\nms{\E_{J}g}_{L^{6}_{\#}(w_B)}^{2})(\sum_{J \in P_{\delta}(I')}\nms{\E_{J'}g}_{L^{6}_{\#}(w_B)}^{2})^{2}
\end{aligned}
\end{align}
for all squares $B$ of side length $\delta^{-2}$, $g: [0, 1] \rightarrow \C$ and all intervals $I \in P_{\nu^{a}}([0, 1])$,
$I' \in P_{\nu^{b}}([0, 1])$ with $d(I, I') \geq \nu$.

Suppose $a > b$ (the proof when $a \leq b$ is similar). The uncertainty principle implies that
\begin{align*}
\avg{\Delta \in P_{\nu^{-a}}(B)}\nms{\E_{I_1}g}_{L^{2}_{\#}(\Delta)}^{2}&\nms{\E_{I_2}g}_{L^{4}_{\#}(\Delta)}^{4}\\
& = \frac{1}{|P_{\nu^{-a}}(B)|}\sum_{\Delta \in P_{\nu^{-a}}(B)}(\frac{1}{|\Delta|}\int_{\Delta}|\E_{I_2}g|^{4})\nms{\E_{I_1}g}_{L^{2}_{\#}(\Delta)}^{2}\\
& \sim \frac{1}{|B|}\int_{B}|\E_{I_1}g|^{2}|\E_{I_2}g|^{4}
\end{align*}
where the last $\sim$ is because $|\E_{I_1}g|$ is essentially constant on $\Delta$.
Therefore our bilinear constant $\mc{M}_{a, b}$ is essentially the same as the bilinear constant $M_{a, b}$
we defined in \eqref{mabdef}.

\subsection{Some basic properties}
We now have the weighted rigorous versions of Lemmas \ref{bern_uw} and \ref{l2l2_uw}. Note that we will only need the $L^{\infty}$ version of Lemma \ref{bern_uw}.
\begin{lemma}[Bernstein's inequality]
Let $I$ be an interval of length $1/R$ and $\Delta$ a square of side length $R$.
Then $$\nms{\E_{I}g}_{L^{\infty}(\Delta)} \lsm \nms{\E_{I}g}_{L^{p}_{\#}(w_{\Delta})}.$$
\end{lemma}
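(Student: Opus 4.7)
The plan is to prove this by a standard reproducing-kernel argument: because $\E_{I}g$ has Fourier support in a small arc of the parabola, it is essentially its own convolution with a Schwartz kernel concentrated at spatial scale $R$, and this pointwise convolution bound is exactly what converts an $L^\infty$ value into an averaged integral.

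First I would observe that the (distributional) Fourier support of $\E_{I}g$ is contained in the arc $\{(\xi,\xi^{2}):\xi\in I\}$, which for $I$ of length $1/R$ (and $|\alpha|\le 1$) sits inside a Euclidean ball of radius $\lsm 1/R$ about some point $\om_{0}$. Choose a fixed Schwartz bump $\chi$ equal to $1$ on $B(0,C)$ and supported in $B(0,2C)$ for an appropriate absolute constant $C$, and set
\begin{align*}
\wh{\psi}(\xi):=\chi(R(\xi-\om_{0})),\qquad\text{so that}\qquad \psi(x)=e^{2\pi i\om_{0}\cdot x}\,R^{-2}\check{\chi}(x/R).
\end{align*}
Then $\wh\psi\equiv 1$ on the Fourier support of $\E_{I}g$, hence $\E_{I}g=(\E_{I}g)\ast\psi$, and the modulation phase drops out of the absolute value, giving $|\psi(x)|\lsm R^{-2}(1+|x|/R)^{-100}\lsm R^{-2}w_{B(0,R)}(x)$.

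Next, for any $x\in\Delta$, pointwise convolution together with the elementary weight comparison $w_{B(x,R)}(y)\lsm w_{\Delta}(y)$ (which follows from the triangle inequality since $|x-c_{\Delta}|\lsm R$) yields
\begin{align*}
|(\E_{I}g)(x)|\le\int|(\E_{I}g)(y)||\psi(x-y)|\,dy\lsm R^{-2}\int|(\E_{I}g)(y)|\,w_{\Delta}(y)\,dy=\nms{\E_{I}g}_{L^{1}_{\#}(w_{\Delta})}.
\end{align*}
Finally, Hölder's inequality splitting $w_{\Delta}=w_{\Delta}^{1/p}w_{\Delta}^{1/p'}$, combined with $\int w_{\Delta}\lsm|\Delta|$, upgrades $L^{1}_{\#}(w_{\Delta})$ to $L^{p}_{\#}(w_{\Delta})$; taking $L^{\infty}$ in $x\in\Delta$ finishes the lemma.

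There is no serious obstacle here; the only care is in setting the decay parameter of $\chi$ large enough that $|\psi|\lsm R^{-2}w_{B(0,R)}$ with the specific $100$-th power weight used in the paper, and in verifying the harmless constant loss in the pointwise comparison $w_{B(x,R)}\lsm w_{\Delta}$ for $x\in\Delta$. This is exactly the argument given in \cite[Corollary 4.3]{sg} and \cite[Lemma 2.2.20]{thesis} that the statement already references.
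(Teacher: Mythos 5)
Your argument is correct and is essentially the standard reproducing-kernel proof that the paper invokes by citing \cite[Corollary 4.3]{sg} and \cite[Lemma 2.2.20]{thesis}: write $\E_I g = (\E_I g)\ast\psi$ with $\wh\psi \equiv 1$ on a ball of radius $\sim 1/R$ containing the arc, use $|\psi| \lsm R^{-2}w_{B(0,R)}$ and the comparison $w_{B(x,R)} \lsm w_\Delta$ for $x \in \Delta$, then upgrade $L^1_{\#}(w_\Delta)$ to $L^p_{\#}(w_\Delta)$ by H\"older. No gaps; the details you flag (decay order of $\check\chi$ and the constant in $w_{B(x,R)} \lsm w_\Delta$) are handled exactly as you say.
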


\begin{lemma}[$l^2 L^2$ decoupling]
Let $I$ be an interval of length $\geq 1/R$ such that $R|I| \in \N$ and $\Delta$ a square of side length $R$. Then
\begin{align*}
\nms{\E_{I}g}_{L^{2}(w_{\Delta})} \lsm(\sum_{J \in P_{1/R}(I)}\nms{\E_{J}g}_{L^{2}(w_{\Delta})}^{2})^{1/2}.
\end{align*}
\end{lemma}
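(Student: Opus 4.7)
The plan is to reduce the weighted $l^2 L^2$ decoupling to the unweighted version (Lemma \ref{l2l2_uw}) by exploiting that $w_\Delta$ is locally constant at scale $R$.

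First, I would tile $\R^2$ by essentially disjoint squares $\{\tilde\Delta\}$ of side length $R$ (the same scale as $\Delta$). From the explicit form $w_\Delta(x) = (1+|x-c|/R)^{-100}$, any two points $x, y$ lying in a common tile $\tilde\Delta$ satisfy $|x-c|/R$ and $|y-c|/R$ differing by $O(1)$, and hence $w_\Delta(x) \sim w_\Delta(y)$ uniformly in $\tilde\Delta$. Denoting by $x_{\tilde\Delta}$ the center of $\tilde\Delta$, this gives
$$\int_{\R^2} |\E_I g|^2 w_\Delta \sim \sum_{\tilde\Delta} w_\Delta(x_{\tilde\Delta}) \int_{\tilde\Delta} |\E_I g|^2.$$

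Second, for each tile $\tilde\Delta$, itself a square of side length $R$, I would apply Lemma \ref{l2l2_uw} with $\Delta$ replaced by $\tilde\Delta$ to obtain
$$\int_{\tilde\Delta} |\E_I g|^2 \lsm \sum_{J \in P_{1/R}(I)} \int_{\tilde\Delta} |\E_J g|^2.$$
Inserting this into the previous display and interchanging the order of summation yields
$$\int_{\R^2} |\E_I g|^2 w_\Delta \lsm \sum_J \sum_{\tilde\Delta} w_\Delta(x_{\tilde\Delta}) \int_{\tilde\Delta} |\E_J g|^2 \sim \sum_J \int_{\R^2} |\E_J g|^2 w_\Delta,$$
where the last comparison is the same locally-constant argument applied to $|\E_J g|^2$ in place of $|\E_I g|^2$. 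Taking square roots gives the desired inequality.

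There is no real obstacle beyond the two invocations of the locally-constant property of $w_\Delta$, which are direct from the polynomial form of the weight; all of the actual Fourier-analytic content (Plancherel plus the bounded overlap of thickenings of parabola arcs) is already packaged inside Lemma \ref{l2l2_uw}. One could alternatively prove the weighted version from scratch by choosing a Schwartz bump $\psi$ with $|\psi|^2 \geq 1_{B(0,1)}$, $|\psi|^2 \lesssim (1+|x|)^{-100}$, and $\widehat{\psi}$ of small compact support, then applying Plancherel to $\E_I g \cdot \psi_\Delta$ and observing bounded multiplicity of the Fourier supports of the summands; but that route essentially reproves the unweighted case, so deferring to Lemma \ref{l2l2_uw} is cleaner.
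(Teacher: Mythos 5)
The paper does not prove this weighted lemma explicitly---it is stated without a proof block, with the intent that it follows from the same references ([sg, Prop.\ 6.1], [thesis, Lemma 2.2.21]) as the heuristic unweighted version in Section 3. Your tiling-and-local-constancy reduction is the standard route from unweighted to weighted, and your two uses of the locally constant property of $w_\Delta$ at scale $R$ are correct: for $x,y$ in a common $R$-tile, $w_\Delta(x)/w_\Delta(y)$ is bounded by an absolute constant, and likewise the replacement of $w_\Delta$ by $w_\Delta(x_{\tilde\Delta})$ on each tile at the end.

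There is, however, one subtlety worth flagging about what you take as input. Lemma \ref{l2l2_uw} is stated in the ``pretend $w_B=1_B$'' section, and the version with $L^2(\tilde\Delta)$ on \emph{both} sides is more delicate than it looks: the Plancherel/bounded-overlap proof requires multiplying by a function whose Fourier transform is compactly supported at scale $1/R$, and a sharp cutoff $1_{\tilde\Delta}$ is not such a function. What comes cleanly out of the Fourier argument is the mixed version $\nms{\E_I g}_{L^2(\tilde\Delta)}^2 \lsm \sum_J \nms{\E_J g}_{L^2(w_{\tilde\Delta})}^2$. Plugging that into your tiling argument requires, in the last display, replacing the second local-constancy step by the bound $\sum_{\tilde\Delta} w_\Delta(x_{\tilde\Delta})\,w_{\tilde\Delta}(x) \lsm w_\Delta(x)$, which is exactly the convolution estimate $w_{B(0,R)}\ast w_{B(0,R)} \lsm R^2 w_{B(0,R)}$ already recorded in the paper. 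With that adjustment your proof is complete. If instead you rely on Lemma \ref{l2l2_uw} literally as stated, you are either importing the weighted content you are trying to prove (since that is what the references establish rigorously) or risking a logarithmic loss in the number of intervals from the imperfect orthogonality of sharp cutoffs.

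A smaller point about your closing remark: a Schwartz function $\psi$ with $\wh\psi$ compactly supported is the restriction of an entire function and therefore has real zeros, so one cannot arrange $|\psi_\Delta|^2 \gtrsim w_\Delta$ pointwise; the ``from scratch'' route you sketch only directly yields the mixed unweighted/weighted inequality on a single tile, and the tiling step you used in your main argument is genuinely needed to upgrade it to the fully weighted statement.
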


We now run through the substitutes of Lemmas \ref{mabtriv}-\ref{biv1}.
\begin{lemma}\label{triv}
Suppose $\delta$ and $\nu$ were such that $\nu^{a}\delta^{-1}$, $\nu^{b}\delta^{-1} \in \N$. Then
\begin{align*}
\mc{M}_{a, b}(\delta, \nu) \lsm D(\frac{\delta}{\nu^{a}})^{1/3}D(\frac{\delta}{\nu^b})^{2/3}.
\end{align*}
\end{lemma}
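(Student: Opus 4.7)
The plan is to chain together Hölder, weight aggregation, and parabolic rescaling. I will fix arbitrary $3\nu$-separated intervals $I \in P_{\nu^a}([0,1])$, $I' \in P_{\nu^b}([0,1])$ and a square $B$ of side length $\delta^{-2}$, and estimate the left-hand side of \eqref{bik_const}.

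\medskip
\textbf{Step 1 (pointwise Hölder inside each cube).} For every square $\Delta$ in the partition, Hölder on $w_\Delta\,dx$ gives
\begin{align*}
\nms{\E_I g}_{L^{2}_{\#}(w_\Delta)}^{2}\nms{\E_{I'}g}_{L^{4}_{\#}(w_\Delta)}^{4}
\leq \nms{\E_I g}_{L^{6}_{\#}(w_\Delta)}^{2}\nms{\E_{I'}g}_{L^{6}_{\#}(w_\Delta)}^{4}.
\end{align*}

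\medskip
\textbf{Step 2 (Hölder on the average).} Averaging over $\Delta \in P_{\nu^{-\max(a,b)}}(B)$ and applying Hölder with exponents $3$ and $3/2$ yields
\begin{align*}
\avg{\Delta}\nms{\E_I g}_{L^{6}_{\#}(w_\Delta)}^{2}\nms{\E_{I'}g}_{L^{6}_{\#}(w_\Delta)}^{4}
\leq \Bigl(\avg{\Delta}\nms{\E_I g}_{L^{6}_{\#}(w_\Delta)}^{6}\Bigr)^{1/3}\Bigl(\avg{\Delta}\nms{\E_{I'} g}_{L^{6}_{\#}(w_\Delta)}^{6}\Bigr)^{2/3}.
\end{align*}

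\medskip
\textbf{Step 3 (aggregate the weights).} Since $|\Delta|\cdot|P_{\nu^{-\max(a,b)}}(B)| = |B|$ and $\sum_\Delta w_\Delta \lsm w_B$ (from the appendix properties of weights quoted in the introduction), for any $F$,
\begin{align*}
\avg{\Delta}\nms{F}_{L^{6}_{\#}(w_\Delta)}^{6}
= \frac{1}{|B|}\int |F|^{6}\sum_{\Delta}w_\Delta \lsm \nms{F}_{L^{6}_{\#}(w_B)}^{6}.
\end{align*}
Applying this with $F = \E_I g$ and $F = \E_{I'}g$ bounds the previous display by
$\nms{\E_I g}_{L^{6}_{\#}(w_B)}^{2}\nms{\E_{I'}g}_{L^{6}_{\#}(w_B)}^{4}$ up to an absolute constant.

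\medskip
\textbf{Step 4 (parabolic rescaling).} Since $I$ has length $\nu^a$ and $\delta/\nu^a \in \N^{-1}$, the weighted form of parabolic rescaling \eqref{wlhs} (applied to $\E_I g$) gives
\begin{align*}
\nms{\E_I g}_{L^{6}_{\#}(w_B)} \lsm D(\tfrac{\delta}{\nu^a})\Bigl(\sum_{J \in P_\delta(I)}\nms{\E_J g}_{L^{6}_{\#}(w_B)}^{2}\Bigr)^{1/2},
\end{align*}
and analogously for $I'$ with $D(\delta/\nu^b)$. Combining Steps 1--4 and taking sixth roots proves
\begin{align*}
\mc{M}_{a,b}(\delta,\nu) \lsm D(\tfrac{\delta}{\nu^a})^{1/3} D(\tfrac{\delta}{\nu^b})^{2/3},
\end{align*}
as desired.

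\medskip
There is essentially no obstacle here; the only bookkeeping issue is ensuring that the factor $1/|\Delta|$ in $L^p_\#(w_\Delta)$ and the cardinality of $P_{\nu^{-\max(a,b)}}(B)$ multiply to $1/|B|$ so that Step 3 produces the correct $L^6_\#(w_B)$ quantity, and that the weighted parabolic rescaling \eqref{wlhs} is the version with $w_B$ on both sides. Both are handled by the weight properties already recalled in the introduction.
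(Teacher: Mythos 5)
Your proof is correct and follows the same route as the paper: pointwise H\"{o}lder on each $\Delta$ to pass from $L^2_\#, L^4_\#$ to $L^6_\#$, H\"{o}lder with exponents $3, 3/2$ on the average over $\Delta$, the weight aggregation $\sum_{\Delta}w_{\Delta} \lsm w_{B}$ to collapse to a single $L^6_\#(w_B)$ quantity, and finally weighted parabolic rescaling via \eqref{wlhs}. The only difference is that you spell out the bookkeeping steps (the identity $|\Delta|\cdot|P_{\nu^{-\max(a,b)}}(B)| = |B|$ and the weighted form of parabolic rescaling) more explicitly than the paper does, which is fine.
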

\begin{proof}
Let $I_1 \in P_{\nu^a}([0, 1])$ and $I_2 \in P_{\nu^b}([0, 1])$.
H\"{o}lder's inequality gives that
\begin{align*}
&\avg{\Delta \in P_{\nu^{-\max(a, b)}}(B)}\nms{\E_{I_1}g}_{L^{2}_{\#}(w_{\Delta})}^{2}\nms{\E_{I_2}g}_{L^{4}_{\#}(w_{\Delta})}^{4}\\
&\quad\quad\leq \avg{\Delta \in P_{\nu^{-\max(a, b)}}(B)}\nms{\E_{I_1}g}_{L^{6}_{\#}(w_{\Delta})}^{2}\nms{\E_{I_2}g}_{L^{6}_{\#}(w_{\Delta})}^{4}\\
&\quad\quad\leq (\avg{\Delta \in P_{\nu^{-\max(a, b)}}(B)}\nms{\E_{I_1}g}_{L^{6}_{\#}(w_{\Delta})}^{6})^{1/3}(\avg{\Delta \in P_{\nu^{-\max(a, b)}}(B)}\nms{\E_{I_2}g}_{L^{6}_{\#}(w_{\Delta})}^{6})^{2/3}\\
&\quad\quad \lsm\nms{\E_{I_1}g}_{L^{6}_{\#}(w_{B})}^{2}\nms{\E_{I_2}g}_{L^{6}_{\#}(w_{B})}^{4}
\end{align*}
where the last inequality we have used that $\sum_{\Delta} w_{\Delta} \lsm w_{B}$ (see for example Corollary \ref{part}).
Finally applying \eqref{wlhs} with parabolic rescaling then completes
the proof of Lemma \ref{triv}.
\end{proof}

\begin{lemma}\label{interchange}
Suppose $\nu^{a}\delta^{-1}, \nu^{b}\delta^{-1} \in \N$. Then
\begin{align*}
\mc{M}_{a, b}(\delta, \nu) \lsm \mc{M}_{b, a}(\delta, \nu)^{1/2}D(\frac{\delta}{\nu^b})^{1/2}.
\end{align*}
\end{lemma}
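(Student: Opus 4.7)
The plan is to adapt the proof of Lemma \ref{switch} to the per-$\Delta$, weighted-norm setting of \eqref{bik_const}. Fix intervals $I_{1} \in P_{\nu^{a}}([0,1])$ and $I_{2} \in P_{\nu^{b}}([0,1])$ with $d(I_{1}, I_{2}) \geq \nu$, and a square $B$ of side length $\delta^{-2}$. For each $\Delta \in P_{\nu^{-\max(a,b)}}(B)$, the first step is to combine two elementary facts about the weighted norms $\nms{\cdot}_{L^{p}_{\#}(w_{\Delta})}$: the log-convexity bound
\[
\nms{\E_{I_{2}}g}_{L^{4}_{\#}(w_{\Delta})}^{8} \leq \nms{\E_{I_{2}}g}_{L^{2}_{\#}(w_{\Delta})}^{2}\nms{\E_{I_{2}}g}_{L^{6}_{\#}(w_{\Delta})}^{6},
\]
which is Cauchy--Schwarz against $w_{\Delta}\,dx$ after writing $|\E_{I_{2}}g|^{4} = |\E_{I_{2}}g|\cdot|\E_{I_{2}}g|^{3}$, together with the Jensen-type bound $\nms{\E_{I_{1}}g}_{L^{2}_{\#}(w_{\Delta})} \lsm \nms{\E_{I_{1}}g}_{L^{4}_{\#}(w_{\Delta})}$, which is H\"older's inequality using $\int w_{\Delta} \sim |\Delta|$. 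Multiplying these gives
\[
\Big(\nms{\E_{I_{1}}g}_{L^{2}_{\#}(w_{\Delta})}^{2}\nms{\E_{I_{2}}g}_{L^{4}_{\#}(w_{\Delta})}^{4}\Big)^{2} \lsm \Big(\nms{\E_{I_{2}}g}_{L^{2}_{\#}(w_{\Delta})}^{2}\nms{\E_{I_{1}}g}_{L^{4}_{\#}(w_{\Delta})}^{4}\Big)\cdot \nms{\E_{I_{2}}g}_{L^{6}_{\#}(w_{\Delta})}^{6},
\]
which reads $A_{\Delta}^{2} \lsm B_{\Delta}\cdot C_{\Delta}$, where $A_{\Delta}$ is the defining integrand of $\mc{M}_{a,b}$ in \eqref{bik_const}, $B_{\Delta}$ is---after swapping the roles of $I_{1}$ and $I_{2}$, and noting that $\max(b,a) = \max(a,b)$---the defining integrand of $\mc{M}_{b,a}$, and $C_{\Delta} := \nms{\E_{I_{2}}g}_{L^{6}_{\#}(w_{\Delta})}^{6}$.

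Next, averaging over $\Delta$ and applying Cauchy--Schwarz yields
\[
\avg{\Delta \in P_{\nu^{-\max(a,b)}}(B)} A_{\Delta} \lsm \Big(\avg{\Delta} B_{\Delta}\Big)^{1/2}\Big(\avg{\Delta} C_{\Delta}\Big)^{1/2}.
\]
The first factor is controlled directly by the definition \eqref{bik_const} of $\mc{M}_{b,a}$, producing $\mc{M}_{b,a}(\delta,\nu)^{3}$ times the square roots of the relevant decoupling sums. For the second factor, the standard weight bound $\sum_{\Delta}w_{\Delta} \lsm w_{B}$ gives $\avg{\Delta}C_{\Delta} \lsm \nms{\E_{I_{2}}g}_{L^{6}_{\#}(w_{B})}^{6}$, and then the weighted parabolic rescaling (\eqref{wlhs} combined with the change of variables used in the proof of Lemma \ref{triv}) bounds this by $D(\delta/\nu^{b})^{6}(\sum_{J \in P_{\delta}(I_{2})}\nms{\E_{J}g}_{L^{6}_{\#}(w_{B})}^{2})^{3}$.

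Multiplying the two factors and comparing with the definition of $\mc{M}_{a,b}(\delta,\nu)^{6}$ yields the claimed bound $\mc{M}_{a,b} \lsm \mc{M}_{b,a}^{1/2}D(\delta/\nu^{b})^{1/2}$. I do not anticipate a genuine obstacle: every ingredient---log-convexity of $L^{p}$, Jensen, Cauchy--Schwarz, $\sum_{\Delta}w_{\Delta} \lsm w_{B}$, and weighted parabolic rescaling---is already in place from Section \ref{fc} and the first part of Section \ref{bik}. The only minor bookkeeping is checking that $\nms{\cdot}_{L^{p}_{\#}(w_{\Delta})}$ behaves like an $L^{p}$-norm on a probability measure (which reduces to $\int w_{\Delta} \sim |\Delta|$) and verifying that it is the length scale $\nu^{b}$ of the interval in the $L^{4}$-factor of $\mc{M}_{a,b}$ that naturally produces $D(\delta/\nu^{b})$ on the right-hand side.
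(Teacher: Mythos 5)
Your proposal is correct and follows essentially the same approach as the paper: log-convexity $\nms{\E_{I_2}g}_{L^4_\#(w_\Delta)}^4 \leq \nms{\E_{I_2}g}_{L^2_\#(w_\Delta)}\nms{\E_{I_2}g}_{L^6_\#(w_\Delta)}^3$, the Jensen bound $\nms{\E_{I_1}g}_{L^2_\#(w_\Delta)} \lsm \nms{\E_{I_1}g}_{L^4_\#(w_\Delta)}$, Cauchy--Schwarz in the $\Delta$-average, $\sum_\Delta w_\Delta \lsm w_B$, and parabolic rescaling. The only (cosmetic) difference is that you apply the Jensen step before the Cauchy--Schwarz over $\Delta$, whereas the paper does so afterward.
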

\begin{proof}
Let $I_1 \in P_{\nu^a}([0, 1])$ and $I_2 \in P_{\nu^b}([0, 1])$.
We have
\begin{align*}
&\avg{\Delta \in P_{\nu^{-\max(a, b)}}(B)}\nms{\E_{I_1}g}_{L^{2}_{\#}(w_{\Delta})}^{2}\nms{\E_{I_2}g}_{L^{4}_{\#}(w_{\Delta})}^{4}\\
&\,\,\leq \avg{\Delta \in P_{\nu^{-\max(a, b)}}(B)}\nms{\E_{I_1}g}_{L^{2}_{\#}(w_{\Delta})}^{2}\nms{\E_{I_2}g}_{L^{2}_{\#}(w_{\Delta})}\nms{\E_{I_2}g}_{L^{6}_{\#}(w_{\Delta})}^{3}\\
&\,\,\leq (\avg{\Delta \in P_{\nu^{-\max(a, b)}}(B)}\nms{\E_{I_1}g}_{L^{2}_{\#}(w_{\Delta})}^{4}\nms{\E_{I_2}g}_{L^{2}_{\#}(w_{\Delta})}^{2})^{1/2}(\avg{\Delta \in P_{\nu^{-\max(a, b)}}(B)}\nms{\E_{I_2}g}_{L^{6}_{\#}(w_{\Delta})}^{6})^{1/2}\\
&\,\,\lsm (\avg{\Delta \in P_{\nu^{-\max(a, b)}}(B)}\nms{\E_{I_1}g}_{L^{4}_{\#}(w_{\Delta})}^{4}\nms{\E_{I_2}g}_{L^{2}_{\#}(w_{\Delta})}^{2})^{1/2}\nms{\E_{I_2}g}_{L^{6}_{\#}(w_B)}^{3}
\end{align*}
where the first and second inequalities are because of H\"{o}lder's inequality and the third inequality is an application of
H\"{o}lder's inequality and the estimate $\sum_{\Delta}w_{\Delta} \lsm w_{B}$. Applying
parabolic rescaling and the definition of $\mc{M}_{b, a}$ then completes the proof of Lemma \ref{interchange}.
\end{proof}

\begin{lemma}[Bilinear reduction]\label{bi_red}
Suppose $\delta$ and $\nu$ were such that $\nu\delta^{-1} \in \N$.
Then
$$D(\delta) \lsm D(\frac{\delta}{\nu}) + \nu^{-1}\mc{M}_{1, 1}(\delta, \nu).$$
\end{lemma}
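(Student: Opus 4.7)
The plan is to follow the proof of Lemma \ref{biv1} verbatim for the setup, and replace only the off-diagonal analysis so that it lands on $\mc{M}_{1,1}$ instead of $M_{1,1}$. Write $\{I_i\}_{i=1}^{\nu^{-1}} = P_{\nu}([0,1])$, expand $\E_{[0,1]}g = \sum_i \E_{I_i}g$, and split as in \eqref{ef2d_bieq1} into the diagonal block $\{|i-j|\leq 3\}$ and the off-diagonal block $\{|i-j| > 3\}$, whose members are automatically $3\nu$-separated. The diagonal block is handled identically to the corresponding part of Lemma \ref{biv1}: the triangle inequality, Cauchy--Schwarz, and parabolic rescaling yield the $D(\delta/\nu)(\sum_J \nms{\E_J g}_{L^6(w_B)}^2)^{1/2}$ contribution.

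The new work happens in the off-diagonal block. After applying H\"older as in \eqref{holder_bi}, it suffices for a fixed $3\nu$-separated pair $I_i, I_j$ to estimate $\int_B |\E_{I_i}g|^2|\E_{I_j}g|^4$ by the right hand side of \eqref{bik_const} with $a=b=1$. Tile $B$ by $\Delta \in P_{\nu^{-1}}(B)$ and use $1_\Delta \lsm w_\Delta$ to bound
\begin{align*}
\int_\Delta |\E_{I_i}g|^2|\E_{I_j}g|^4 \leq \nms{\E_{I_i}g}_{L^\infty(\Delta)}^{2} \int |\E_{I_j}g|^{4} w_{\Delta}.
\end{align*}
Since $|I_i|=\nu$ is dual to the side length $\nu^{-1}$ of $\Delta$, the weighted Bernstein inequality Lemma \ref{loc_bern} controls $\nms{\E_{I_i}g}_{L^\infty(\Delta)}^2$ by $\nms{\E_{I_i}g}_{L^2_\#(w_\Delta)}^2$, producing
\begin{align*}
\int_\Delta |\E_{I_i}g|^{2}|\E_{I_j}g|^{4} \lsm |\Delta|\, \nms{\E_{I_i}g}_{L^{2}_{\#}(w_{\Delta})}^{2}\, \nms{\E_{I_j}g}_{L^{4}_{\#}(w_{\Delta})}^{4}.
\end{align*}
Summing in $\Delta$ and using $|B|/|\Delta| = |P_{\nu^{-1}}(B)|$ reassembles the integral as $|B|\,\avg{\Delta \in P_{\nu^{-1}}(B)}\nms{\E_{I_i}g}_{L^2_\#(w_\Delta)}^{2}\nms{\E_{I_j}g}_{L^4_\#(w_\Delta)}^{4}$, at which point the definition \eqref{bik_const} of $\mc{M}_{1,1}$ applies directly; tracking factors of $|B|$ then converts the $L^6_\#(w_B)$ norms on the right into the $L^6(w_B)$ norms in the definition of $D(\delta)$. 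This delivers the $\nu^{-1}\mc{M}_{1,1}(\delta,\nu)$ contribution, and combining with the diagonal block gives the lemma.

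The only obstacle beyond what already appears in Lemma \ref{biv1} is the conversion in the previous paragraph from the unweighted integral $\int_B |\E_{I_i}g|^{2}|\E_{I_j}g|^{4}$ to the localized, weighted, averaged product that appears in \eqref{bik_const}. This is exactly the rigorous form of the uncertainty-principle heuristic motivating $\mc{M}_{a,b}$ in the discussion following \eqref{bik_const}, and the weighted Bernstein inequality Lemma \ref{loc_bern} is the tool that makes that heuristic go through.
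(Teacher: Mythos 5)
Your proposal is correct and is essentially the paper's own argument: the diagonal block is treated identically to Lemma \ref{biv1}, and for the off-diagonal block you tile $B$ by $\Delta \in P_{\nu^{-1}}(B)$, pull one factor to $L^\infty(\Delta)$, apply weighted Bernstein (Lemma \ref{loc_bern}), and invoke the definition \eqref{bik_const}. The only cosmetic difference is that you pull the $L^2$ factor $\E_{I_i}g$ to $L^\infty$ and use Bernstein $L^\infty \lesssim L^2_\#(w_\Delta)$, whereas the paper pulls the $L^4$ factor $\E_{I_j}g$ to $L^\infty$ and uses Bernstein $L^\infty \lesssim L^4_\#(w_\Delta)$; both land on the same product $\nms{\E_{I_i}g}_{L^2_\#(w_\Delta)}^2 \nms{\E_{I_j}g}_{L^4_\#(w_\Delta)}^4$.
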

\begin{proof}
The proof is essentially the same as that of Lemma \ref{biv1} except
when analyzing \eqref{holder_bi} in the off-diagonal terms we use
\begin{align*}
\nms{|\E_{I_i}g|^{1/3}|\E_{I_j}g|^{2/3}}_{L^{6}_{\#}(B)}^{6} &= \avg{\Delta \in P_{\nu^{-1}}(B)}\frac{1}{|\Delta|}\int_{\Delta}|\E_{I_i}g|^{2}|\E_{I_j}g|^{4}\\
&\leq \avg{\Delta \in P_{\nu^{-1}}(B)}\nms{\E_{I_i}g}_{L^{2}_{\#}(\Delta)}^{2}\nms{\E_{I_j}g}_{L^{\infty}(\Delta)}^{4}\\
&\lsm \avg{\Delta \in P_{\nu^{-1}}(B)}\nms{\E_{I_i}g}_{L^{2}_{\#}(w_{\Delta})}^{2}\nms{\E_{I_j}g}_{L^{4}_{\#}(w_{\Delta})}^{4}
\end{align*}
where the second inequality we have used Bernstein's inequality.
\end{proof}

\subsection{Ball inflation}
We now prove rigorously the ball inflation lemma we mentioned in the previous section.
\begin{lemma}[Ball inflation]\label{ball_bik}
Let $b \geq 1$ be a positive integer.
Suppose $I_1$ and $I_2$ are $\nu$-separated intervals of length $\nu^b$. Then for any square $\Delta'$ of side length $\nu^{-2b}$,
we have
\begin{align}\label{bmain}
\avg{\Delta \in P_{\nu^{-b}}(\Delta')}\nms{\E_{I_1}g}_{L^{2}_{\#}(w_{\Delta})}^{2}\nms{\E_{I_2}g}_{L^{4}_{\#}(w_{\Delta})}^{4} \lsm \nu^{-1}\nms{\E_{I_1}g}_{L^{2}_{\#}(w_{\Delta'})}^{2}\nms{\E_{I_2}g}_{L^{4}_{\#}(w_{\Delta'})}^{4}.
\end{align}
\end{lemma}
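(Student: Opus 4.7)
The plan is to make rigorous the heuristic proof of Lemma \ref{ef2d_ball}: decompose $|\E_{I_1}g|^2$ and $|\E_{I_2}g|^4$ into wave packets adapted to two transverse families of $\nu^{-b} \times \nu^{-2b}$ tubes, and exploit their $\nu$-transversality to obtain the $\nu^{-1}$ gain.

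First I would reduce the LHS to a spatial average. Since $|I_1| = |I_2| = \nu^b$ and each $\Delta \in P_{\nu^{-b}}(\Delta')$ has side length $\nu^{-b}$, Bernstein (Lemma \ref{loc_bern}) gives $\nms{\E_{I_i}g}_{L^{p_i}_\#(w_\Delta)}^{p_i} \lsm \nms{\E_{I_i}g}_{L^{\infty}(\Delta)}^{p_i}$, which up to Schwartz tails behaves like $|\Delta|^{-1}\int_\Delta |\E_{I_i}g|^{p_i}$. Summing over $\Delta \in P_{\nu^{-b}}(\Delta')$ then reduces \eqref{bmain} to an estimate of the form
\[\frac{1}{|\Delta'|}\int_{\Delta'} |\E_{I_1}g|^2|\E_{I_2}g|^4 \lsm \frac{\nu^{-1}}{|\Delta'|^2}\Bigl(\int |\E_{I_1}g|^2 w_{\Delta'}\Bigr)\Bigl(\int |\E_{I_2}g|^4 w_{\Delta'}\Bigr).\]

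For this, I would decompose $F_i := |\E_{I_i}g|^{p_i}$ (with $p_1 = 2$, $p_2 = 4$) into wave packets. Expanding $|\E_{I_i}g|^{p_i}$ as a multilinear integral over $I_i^{p_i}$ shows its Fourier support is contained in a parallelogram $R_i$ of dimensions $\lsm \nu^b \times \nu^{2b}$, with long side along the tangent direction $(1, 2c_{I_i})$ at the midpoint of $I_i$. Choosing Schwartz bumps $\eta_i$ with $\wh{\eta_i} \equiv 1$ on $R_i$ and $|\eta_i|$ $L^1$-normalized and concentrated on dual tubes of size $\nu^{-b} \times \nu^{-2b}$ in the normal direction $(-2c_{I_i}, 1)$, the reproducing identity $F_i = F_i * \eta_i$ yields a pointwise bound $F_i \lsm \sum_{T_i} c_{T_i} |T_i|^{-1}\chi_{T_i}$, where $T_i$ ranges over a tiling of $\R^2$ by such tubes and the coefficients satisfy $\sum_{T_i} c_{T_i} \lsm \int F_i w_{\Delta'}$ for tubes relevant to $\Delta'$.

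The $\nu$-separation of $I_1$ and $I_2$ now enters as transversality: the tube directions $(-2c_{I_1},1)$ and $(-2c_{I_2},1)$ meet at angle $\gtrsim \nu$, so any $T_1, T_2$ in the two families satisfy $|T_1 \cap T_2| \lsm \nu^{-2b-1} = \nu^{-1}|T_1||T_2|/|\Delta'|$. Expanding $F_1 F_2$ via the wave packet decompositions and integrating over $\Delta'$ then gives
\[\int_{\Delta'} F_1 F_2 \lsm \sum_{T_1,T_2} c_{T_1} c_{T_2} |T_1 \cap T_2 \cap \Delta'| \lsm \frac{\nu^{-1}}{|\Delta'|}\Bigl(\int F_1 w_{\Delta'}\Bigr)\Bigl(\int F_2 w_{\Delta'}\Bigr),\]
closing the argument. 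The main obstacle I expect is weight bookkeeping: one must control the Schwartz tails of $w_\Delta$, $w_{\Delta'}$, and the bumps $\eta_i$ so that tube-averages extending beyond $\Delta'$ contribute only errors absorbable into the polynomial decay of $w_{\Delta'}$, and ensure the pointwise Bernstein/wave-packet decompositions survive being averaged against $w_\Delta$ rather than $1_\Delta$.
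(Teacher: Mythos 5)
Your overall plan --- reduce to a spatial estimate, decompose into two transverse families of $\nu^{-b}\times\nu^{-2b}$ tubes, and exploit the $\gtrsim\nu$ angle between the families to get $|T_1\cap T_2|\lsm\nu^{-1-2b}$ --- is the right idea and the last two steps (the transversality count, and $\sum_{T_i}c_{T_i}\lsm\int F_i\,w_{\Delta'}$) match the structure of the paper's argument. But the first step has a genuine error. You invoke Lemma~\ref{loc_bern} to claim
$\nms{\E_{I_i}g}_{L^{p_i}_{\#}(w_{\Delta})}^{p_i}\lsm\nms{\E_{I_i}g}_{L^{\infty}(\Delta)}^{p_i}$, but that lemma goes the other way: it bounds the sup \emph{by} the weighted $L^{p}_{\#}$ norm. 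The inequality you write is false in general. Concretely, if $\E_{I_i}g$ is a single wave packet concentrated on a $\nu^{-b}\times\nu^{-2b}$ tube at distance $D\gg\nu^{-b}$ from $\Delta$, then $\nms{\E_{I_i}g}_{L^{\infty}(\Delta)}$ decays like $(D\nu^{b})^{-K}$ with $K$ as large as the Schwartz decay of the construction, while $\nms{\E_{I_i}g}_{L^{p_i}_{\#}(w_{\Delta})}^{p_i}$ only picks up the fixed $(D\nu^{b})^{-100}$ decay of $w_{\Delta}$ together with a volume factor $|T|/|\Delta|=\nu^{-b}$; choosing $K$ large makes the sup side smaller. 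So the tail contribution in the weighted norm is not a small error relative to $|\Delta|^{-1}\int_{\Delta}|\E_{I_i}g|^{p_i}$, and the phrase ``up to Schwartz tails'' is papering over precisely the step that needs work.

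The paper sidesteps this by never reducing to an unweighted integral. It introduces max functions $F_i(x):=\sup_{y\in 2T_i(x)}\nms{\E_{I_i}g}_{L^{p_i}_{\#}(w_{B(y,\nu^{-b})})}$, constant on tubes, which \emph{pointwise dominate} $\nms{\E_{I_i}g}_{L^{p_i}_{\#}(w_{\Delta})}$ for $x\in\Delta\subset 2T_i(x)$ --- so the weighted $\Delta$-average on the left of \eqref{bmain} is controlled with no tail loss. It then bounds $\frac{1}{|\Delta'|}\int_{\Delta'}F_1^2F_2^4$ by the $|T_1\cap T_2|$ count exactly as you do, and closes by citing the nontrivial ``reverse'' estimate $\nms{F_i}_{L^{p_i}_{\#}(4\Delta')}^{p_i}\lsm\nms{\E_{I_i}g}_{L^{p_i}_{\#}(w_{\Delta'})}^{p_i}$ from \cite[Eq.\ (29)]{sg}, which is where the wave-packet/locally-constant input actually lives. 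Your direct Fourier decomposition of $|\E_{I_i}g|^{p_i}$ is a reasonable alternate route to that last estimate (though it relies on $p_i$ being an even integer, while the max-function argument does not --- and the paper uses ball inflation again for general exponents in Lemma~\ref{ball_bds}); but to salvage your write-up you need to replace the wrong-direction Bernstein reduction with a pointwise domination of the weighted $\Delta$-norms in the spirit of the paper's $F_i$.
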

\begin{proof}
Observe that if $c = (c_1, c_2)$, then $(\E_{I}g)(x + c) = (\E_{I}g_{c})(x)$ where $g_{c}(\xi) = g(\xi)e(\xi c_1 + \xi^{2}c_2)$.
Therefore we may assume that $\Delta'$ is centered at the origin.
Fix intervals $I_1$ and $I_2$ intervals of length $\nu^b$ which are $\nu$-separated with centers $c_1$ and $c_2$, respectively.

Cover $\Delta'$ by a set $\mc{T}_1$ of mutually parallel nonoverlapping rectangles $T_1$ of dimensions $\nu^{-b} \times \nu^{-2b}$
with longer side pointing in the direction of $(-2c_1, 1)$ (the normal direction of the piece of parabola above $I_1$).
Note that any such $\nu^{-b} \times \nu^{-2b}$ rectangle outside $4\Delta'$
cannot cover $\Delta'$ itself. Thus we may assume that all rectangles in $\mc{T}_1$ are contained in $4\Delta'$.
Finally let $T_{1}(x)$ be the rectangle in $\mc{T}_1$ containing $x$.
Similarly define $\mc{T}_2$ except this time we use $I_2$.

For $x \in 4\Delta'$, define
\begin{align*}
F_{1}(x) :=
\begin{cases}
\sup_{y \in 2T_{1}(x)}\nms{\E_{I_1}g}_{L^{2}_{\#}(w_{B(y, \nu^{-b})})} & \text{ if } x \in \bigcup_{T_{1} \in \mc{T}_{1}}T_{1}\\
0 & \text{ if } x \in 4\Delta'\bs\bigcup_{T_{1} \in \mc{T}_{1}}T_{1}
\end{cases}
\end{align*}
and
\begin{align*}
F_{2}(x) :=
\begin{cases}
\sup_{y \in 2T_{2}(x)}\nms{\E_{I_2}g}_{L^{4}_{\#}(w_{B(y, \nu^{-b})})} & \text{ if } x \in \bigcup_{T_{2} \in \mc{T}_{2}}T_{2}\\
0 & \text{ if } x \in 4\Delta'\bs\bigcup_{T_{2} \in \mc{T}_{2}}T_{2}.
\end{cases}
\end{align*}
Given a $\Delta \in P_{\nu^{-b}}(\Delta')$, if $x \in \Delta$, then $\Delta \subset 2T_{i}(x)$. This implies that the center of $\Delta$, $c_{\Delta} \in 2T_{i}(x)$ for $x \in \Delta$,
and hence for all $x \in \Delta$,
$$\nms{\E_{I_1}g}_{L^{2}_{\#}(w_{\Delta})} \leq F_{1}(x)$$
and
$$\nms{\E_{I_2}g}_{L^{4}_{\#}(w_{\Delta})} \leq F_{2}(x).$$
Therefore
\begin{align}\label{bins}
\nms{\E_{I_1}g}_{L^{2}_{\#}(w_{\Delta})}^{2}\nms{\E_{I_2}g}_{L^{4}_{\#}(w_{\Delta})}^{4} \leq \frac{1}{|\Delta|}\int_{\Delta}F_{1}(x)^{2}F_{2}(x)^{4}\, dx.
\end{align}
By how $F_i$ is defined, $F_i$ is constant on each $T_i \in \mc{T}_i$. That is, for each $x \in \bigcup_{T_i \in \mc{T}_i}T_{i}$,
$$F_{i}(x) = \sum_{T_{i} \in \mc{T}_i}a_{T_{i}}1_{T_i}(x)$$
for some constants $a_{T_i} \geq 0$.

Thus using \eqref{bins} and that the $T_i$ are disjoint, the left hand side of \eqref{bmain} is bounded above by
\begin{align}\label{bstep2}
\frac{1}{|\Delta'|}\int_{\Delta'}F_{1}(x)^{2}F_{2}(x)^{4}\, dx  &= \frac{1}{|\Delta'|}\sum_{T_1, T_2}a_{T_1}^{2}a_{T_2}^{4}|T_1 \cap T_2|\lsm \nu^{-1}\frac{\nu^{-2b}}{|\Delta'|}\sum_{T_1, T_2}a_{T_{1}}^{2}a_{T_2}^{4}
\end{align}
where in the last inequality we have used that
since $I_1$ and $I_2$ are $\nu$-separated, sine of the angle between $T_1$ and $T_2$ is $\gtrsim \nu$ and hence
$|T_1 \cap T_2| \lsm \nu^{-1 - 2b}$.
Note that
\begin{align*}
\nms{F_{1}}_{L^{2}_{\#}(4\Delta')}^{2} &= \frac{\nu^{-3b}}{|4\Delta'|}\sum_{T_1}a_{T_1}^{2}
\end{align*}
and
\begin{align*}
\nms{F_{2}}_{L^{4}_{\#}(4\Delta')}^{4} &= \frac{\nu^{-3b}}{|4\Delta'|}\sum_{T_2}a_{T_2}^{4}.
\end{align*}
Therefore \eqref{bstep2} is
\begin{align*}
\lsm \nu^{-1}\nms{F_{1}}_{L^{2}_{\#}(4\Delta')}^{2}\nms{F_{2}}_{L^{4}_{\#}(4\Delta')}^{4}.
\end{align*}
Thus we are done if we can prove that $$\nms{F_1}_{L^{2}_{\#}(4\Delta')}^{2} \lsm \nms{\E_{I_1}g}_{L^{2}_{\#}(w_{\Delta'})}^{2}$$
and $$\nms{F_2}_{L^{4}_{\#}(4\Delta')}^{4} \lsm \nms{\E_{I_2}g}_{L^{4}_{\#}(w_{\Delta'})}^{4},$$
but this was exactly what was shown in \cite[Eq. (29)]{sg} (and \cite[Lemma 2.6.3]{thesis} for the same inequality but with explicit constants).
\end{proof}

Our choice of bilinear constant
\eqref{bik_const} makes the rigorous proofs of Lemmas \ref{mglem2} and \ref{mglem1} immediate consequences of ball inflation
and $l^2 L^2$ decoupling.

\begin{lemma}\label{mglem2_rig}
Suppose $1 \leq a < b$ and $\delta$ and $\nu$ were such that $\nu^{b}\delta^{-1} \in \N$.
Then $$\mc{M}_{a, b}(\delta, \nu) \lsm \mc{M}_{b, b}(\delta, \nu).$$
\end{lemma}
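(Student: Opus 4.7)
The plan is to prove Lemma \ref{mglem2_rig} by applying weighted $l^2 L^2$ decoupling on each square $\Delta \in P_{\nu^{-b}}(B)$ to the factor involving $\E_{I_1}g$, and then comparing the resulting expression with the definition of $\mc{M}_{b,b}$.

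Fix $I_1 \in P_{\nu^{a}}([0,1])$ and $I_2 \in P_{\nu^{b}}([0,1])$ with $d(I_1,I_2) \geq \nu$. Since $\max(a,b) = b$, the averaging on the left side of \eqref{bik_const} for $\mc{M}_{a,b}$ is over $\Delta \in P_{\nu^{-b}}(B)$, exactly matching the averaging in the definition of $\mc{M}_{b,b}$. For each such $\Delta$, the key input is the weighted $l^2 L^2$ decoupling lemma stated right before Lemma \ref{triv}: because $|I_1| = \nu^a \geq \nu^b = 1/R$ with $R$ the side length of $\Delta$, and $\nu^b \delta^{-1} \in \N$ implies $\nu^b|I_1|^{-1} \cdot \nu^{a-b} \cdot \nu^{-b} \cdot \nu^b \in \N$ (i.e.\ the partition $P_{\nu^b}(I_1)$ makes sense), we have
\begin{equation*}
\nms{\E_{I_1}g}_{L^{2}_{\#}(w_{\Delta})}^{2} \lsm \sum_{J \in P_{\nu^{b}}(I_1)} \nms{\E_{J}g}_{L^{2}_{\#}(w_{\Delta})}^{2}.
\end{equation*}

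First I plug this bound into the left side of \eqref{bik_const} for $\mc{M}_{a,b}$ and interchange the finite sum over $J$ with the average over $\Delta$:
\begin{equation*}
\avg{\Delta \in P_{\nu^{-b}}(B)} \nms{\E_{I_1}g}_{L^{2}_{\#}(w_{\Delta})}^{2}\nms{\E_{I_2}g}_{L^{4}_{\#}(w_{\Delta})}^{4} \lsm \sum_{J \in P_{\nu^{b}}(I_1)} \avg{\Delta \in P_{\nu^{-b}}(B)} \nms{\E_{J}g}_{L^{2}_{\#}(w_{\Delta})}^{2}\nms{\E_{I_2}g}_{L^{4}_{\#}(w_{\Delta})}^{4}.
\end{equation*}
Next, for each $J \in P_{\nu^{b}}(I_1)$ note that $|J| = \nu^b = |I_2|$ and, since $J \subset I_1$, the separation $d(J, I_2) \geq d(I_1, I_2) \geq \nu$ is preserved. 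Therefore the pair $(J, I_2)$ satisfies the hypotheses in the definition \eqref{bik_const} of $\mc{M}_{b,b}(\delta,\nu)$, and I can bound each inner average by
\begin{equation*}
\mc{M}_{b,b}(\delta,\nu)^{6} \Bigl(\sum_{J' \in P_{\delta}(J)} \nms{\E_{J'}g}_{L^{6}_{\#}(w_B)}^{2}\Bigr)\Bigl(\sum_{J'' \in P_{\delta}(I_2)} \nms{\E_{J''}g}_{L^{6}_{\#}(w_B)}^{2}\Bigr)^{2}.
\end{equation*}

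Summing over $J \in P_{\nu^{b}}(I_1)$, the first parenthesized factor reassembles into $\sum_{J' \in P_{\delta}(I_1)} \nms{\E_{J'}g}_{L^{6}_{\#}(w_B)}^{2}$ (this uses that $P_{\delta}(I_1)$ is the disjoint union of $P_{\delta}(J)$ over $J \in P_{\nu^{b}}(I_1)$, which is legitimate because $\nu^b\delta^{-1}\in\N$), while the second factor is independent of $J$ and comes along unchanged. This yields precisely the right side of \eqref{bik_const} in the definition of $\mc{M}_{a,b}(\delta,\nu)$, with constant $\lsm \mc{M}_{b,b}(\delta,\nu)^{6}$, which is the desired conclusion. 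The only non-routine step is verifying the separation is inherited by subintervals, which is immediate; everything else is a clean bookkeeping application of weighted $l^2L^2$ decoupling, so I do not anticipate any real obstacle.
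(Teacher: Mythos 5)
Your proof is correct and is essentially the same as the paper's: apply weighted $l^2L^2$ decoupling on each $\Delta \in P_{\nu^{-b}}(B)$ to the $\E_{I_1}g$ factor, interchange the sum over $J$ with the average, and invoke the definition of $\mc{M}_{b,b}$ for each pair $(J, I_2)$, using that subintervals inherit the $\nu$-separation and that $P_{\delta}(I_1) = \bigsqcup_{J} P_{\delta}(J)$. (One minor quibble: the integrality computation you wrote to justify that $P_{\nu^b}(I_1)$ is a valid partition simplifies to $\nu^b$, which is not what you want; the correct observation is simply that $|I_1|/\nu^b = \nu^{a-b} = \nu^{-(b-a)} \in \N$ since $a < b$ and $\nu^{-1}\in\N$.)
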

\begin{proof}
For arbitrary $I_1 \in P_{\nu^{a}}([0, 1])$ and $I_2 \in P_{\nu^{b}}([0, 1])$ which are $\nu$-separated, it suffices to show that
\begin{align*}
\avg{\Delta \in P_{\nu^{-b}}(B)}\nms{\E_{I_1}g}_{L^{2}_{\#}(w_{\Delta})}^{2}&\nms{\E_{I_2}g}_{L^{4}_{\#}(w_{\Delta})}^{4}\\
& \lsm\sum_{J \in P_{\nu^{b}}(I_1)}\avg{\Delta \in P_{\nu^{-b}}(B)}\nms{\E_{J}g}_{L^{2}_{\#}(w_{\Delta})}^{2}\nms{\E_{I_2}g}_{L^{4}_{\#}(w_{\Delta})}^{4}.
\end{align*}
But this is immediate from $l^2 L^2$ decoupling which completes the proof of Lemma \ref{mglem2_rig}.
\end{proof}

\begin{lemma}\label{mglem1_rig}
Let $b \geq 1$ and suppose $\delta$ and $\nu$ were such that $\nu^{2b}\delta^{-1} \in \N$. Then
$$\mc{M}_{b, b}(\delta, \nu) \lsm \nu^{-1/6}\mc{M}_{2b, b}(\delta, \nu).$$
\end{lemma}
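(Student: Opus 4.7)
The plan is to realize the rigorous version of the heuristic argument from Lemma \ref{mglem1} directly within the bilinear constant framework $\mc{M}_{a,b}$: ball inflation upgrades the spatial scale from $\nu^{-b}$ to $\nu^{-2b}$ at the price of $\nu^{-1}$, then $l^2 L^2$ decoupling at the coarser scale lets us replace $\E_{I_1}g$ by a square function over pieces $\E_{J}g$ with $J \in P_{\nu^{2b}}(I_1)$. Because $\mc{M}_{a,b}$ is already written as an average of $L^2_{\#}(w_\Delta)\cdot L^4_{\#}(w_\Delta)$ products, the Bernstein step that appeared in the heuristic Lemma \ref{mglem1} is no longer needed here—its role was already built into the definition of $\mc{M}_{a,b}$.

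Concretely, I would fix $I_1, I_2 \in P_{\nu^{b}}([0,1])$ that are $\nu$-separated and start from
\begin{align*}
\avg{\Delta \in P_{\nu^{-b}}(B)}\nms{\E_{I_1}g}_{L^{2}_{\#}(w_{\Delta})}^{2}\nms{\E_{I_2}g}_{L^{4}_{\#}(w_{\Delta})}^{4}
= \avg{\Delta' \in P_{\nu^{-2b}}(B)}\avg{\Delta \in P_{\nu^{-b}}(\Delta')}\nms{\E_{I_1}g}_{L^{2}_{\#}(w_{\Delta})}^{2}\nms{\E_{I_2}g}_{L^{4}_{\#}(w_{\Delta})}^{4}.
\end{align*}
Applying ball inflation (Lemma \ref{ball_bik}) to the inner average on each $\Delta'$ yields a factor $\nu^{-1}$ and replaces $w_\Delta$ by $w_{\Delta'}$. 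Then I would apply $l^2L^2$ decoupling at scale $R = \nu^{-2b}$ to $\E_{I_1}g$ (which is legal since $|I_1| = \nu^{b} \geq \nu^{2b} = 1/R$ and $R|I_1| = \nu^{-b}\in\N$ thanks to $\nu^{2b}\delta^{-1}\in\N$), obtaining
\begin{align*}
\nms{\E_{I_1}g}_{L^{2}_{\#}(w_{\Delta'})}^{2} \lsm \sum_{J \in P_{\nu^{2b}}(I_1)}\nms{\E_{J}g}_{L^{2}_{\#}(w_{\Delta'})}^{2}.
\end{align*}

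Interchanging the sum with the average over $\Delta'$ and invoking the definition of $\mc{M}_{2b,b}(\delta,\nu)$ on each term (noting that $\max(2b,b) = 2b$, so the relevant spatial scale matches) gives
\begin{align*}
\avg{\Delta' \in P_{\nu^{-2b}}(B)}\nms{\E_{J}g}_{L^{2}_{\#}(w_{\Delta'})}^{2}\nms{\E_{I_2}g}_{L^{4}_{\#}(w_{\Delta'})}^{4}
\leq \mc{M}_{2b,b}(\delta,\nu)^{6}\Bigl(\sum_{K \in P_{\delta}(J)}\nms{\E_{K}g}_{L^{6}_{\#}(w_B)}^{2}\Bigr)\Bigl(\sum_{J' \in P_{\delta}(I_2)}\nms{\E_{J'}g}_{L^{6}_{\#}(w_B)}^{2}\Bigr)^{2}.
\end{align*}
Since $\bigcup_{J \in P_{\nu^{2b}}(I_1)}P_{\delta}(J) = P_{\delta}(I_1)$ (using $\nu^{2b}\delta^{-1}\in\N$), summing over $J \in P_{\nu^{2b}}(I_1)$ collapses the inner sums into $\sum_{K\in P_{\delta}(I_1)}\|\E_{K}g\|_{L^{6}_{\#}(w_B)}^{2}$, and matching the defining inequality for $\mc{M}_{b,b}$ produces the desired bound $\mc{M}_{b,b}^{6} \lsm \nu^{-1}\mc{M}_{2b,b}^{6}$.

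The main content is already encoded in Lemma \ref{ball_bik}, so there is no genuine obstacle here; the only things to verify carefully are the divisibility conditions needed to invoke $l^2L^2$ decoupling at scale $\nu^{-2b}$ and the bookkeeping that turns $\sum_J \sum_{K\in P_\delta(J)}$ into $\sum_{K\in P_\delta(I_1)}$. Both follow immediately from the hypothesis $\nu^{2b}\delta^{-1}\in\N$.
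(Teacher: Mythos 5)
Your proposal is correct and follows essentially the same route as the paper: ball inflation (Lemma \ref{ball_bik}) to pass from $\Delta \in P_{\nu^{-b}}$ to $\Delta' \in P_{\nu^{-2b}}$ at a cost of $\nu^{-1}$, then $l^2L^2$ decoupling of $\E_{I_1}g$ at scale $\nu^{2b}$, and finally the definition of $\mc{M}_{2b,b}$ applied to each $J$ (noting $d(J,I_2)\geq\nu$ and the spatial scale $\nu^{-2b}$ matches $\max(2b,b)$). The divisibility and bookkeeping checks you flag are exactly the routine ones, so nothing is missing.
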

\begin{proof}
For arbitrary $I_1 \in P_{\nu^a}([0, 1])$ and $I_2 \in P_{\nu^b}([0, 1])$ which are $\nu$-separated,
it suffices to prove that
\begin{align*}
\avg{\Delta \in P_{\nu^{-b}}(B)}\nms{\E_{I_1}g}_{L^{2}_{\#}(w_{\Delta})}^{2}&\nms{\E_{I_2}g}_{L^{4}_{\#}(w_{\Delta})}^{4}\\
& \lsm \nu^{-1}\sum_{J \in P_{\nu^{2b}}(I_1)}\avg{\Delta' \in P_{\nu^{-2b}}(B)}\nms{\E_{J}g}_{L^{2}_{\#}(w_{\Delta'})}^{2}\nms{\E_{I_2}g}_{L^{4}_{\#}(w_{\Delta'})}^{4}.
\end{align*}
But this is immediate from ball inflation followed by $l^2 L^2$ decoupling which completes the proof of Lemma \ref{mglem1_rig}.
\end{proof}

Combining Lemmas \ref{interchange}, \ref{mglem2_rig}, and \ref{mglem1_rig} gives the following corollary.
\begin{cor}\label{upgrade}
Suppose $\delta$ and $\nu$ were such that $\nu^{2b}\delta^{-1} \in \N$.
Then
\begin{align*}
\mc{M}_{b, b}(\delta, \nu) \lsm \nu^{-1/6}\mc{M}_{2b, 2b}(\delta, \nu)^{1/2}D(\frac{\delta}{\nu^{b}})^{1/2}.
\end{align*}
\end{cor}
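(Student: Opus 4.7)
The plan is to chain together the three bilinear estimates already established (Lemmas \ref{interchange}, \ref{mglem2_rig}, and \ref{mglem1_rig}) to rewrite $\mc{M}_{b,b}$ in terms of $\mc{M}_{2b,2b}$, paying only the expected factor $\nu^{-1/6}$ from ball inflation and the expected factor of $D(\delta/\nu^b)^{1/2}$ from the interchange-of-roles step.

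Specifically, I would first apply Lemma \ref{mglem1_rig}, which is valid under the hypothesis $\nu^{2b}\delta^{-1}\in\N$, to get
\begin{equation*}
\mc{M}_{b,b}(\delta,\nu) \lsm \nu^{-1/6}\mc{M}_{2b,b}(\delta,\nu).
\end{equation*}
Next, since $\nu^{2b}\delta^{-1}\in\N$ automatically implies $\nu^{b}\delta^{-1}\in\N$, Lemma \ref{interchange} applies with exponents $a=2b$ and $b'=b$, giving
\begin{equation*}
\mc{M}_{2b,b}(\delta,\nu) \lsm \mc{M}_{b,2b}(\delta,\nu)^{1/2}D\!\left(\frac{\delta}{\nu^{b}}\right)^{\!1/2}.
\end{equation*}
Finally, since $b<2b$, Lemma \ref{mglem2_rig} applies with $a=b$ and bigger scale $2b$, yielding
\begin{equation*}
\mc{M}_{b,2b}(\delta,\nu) \lsm \mc{M}_{2b,2b}(\delta,\nu).
\end{equation*}

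Combining the three displays in order produces exactly
\begin{equation*}
\mc{M}_{b,b}(\delta,\nu) \lsm \nu^{-1/6}\mc{M}_{2b,2b}(\delta,\nu)^{1/2}D\!\left(\frac{\delta}{\nu^{b}}\right)^{\!1/2},
\end{equation*}
which is the desired inequality. There is really no obstacle to speak of; the only thing worth double-checking is that each of the three lemmas is actually applicable under the single hypothesis $\nu^{2b}\delta^{-1}\in\N$, which it is (integrality at the coarser scale $\nu^b$ is automatic, and the separation/scale requirements on the intervals are already built into the definition of $\mc{M}_{a,b}$). The step doing the real work is Lemma \ref{mglem1_rig}, whose proof via ball inflation and $l^2L^2$ decoupling is where the $\nu^{-1/6}$ loss originates; the remaining two invocations are essentially bookkeeping to reinsert the interchanged indices back into a symmetric bilinear constant at the doubled frequency scale.
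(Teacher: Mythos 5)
Your proof is correct and follows exactly the chain the paper intends: apply Lemma \ref{mglem1_rig} to go from $\mc{M}_{b,b}$ to $\nu^{-1/6}\mc{M}_{2b,b}$, then Lemma \ref{interchange} (with indices $2b$ and $b$, so the $D$ factor is $D(\delta/\nu^b)^{1/2}$) to reach $\mc{M}_{b,2b}^{1/2}$, and finally Lemma \ref{mglem2_rig} to upgrade $\mc{M}_{b,2b}$ to $\mc{M}_{2b,2b}$. Your check that $\nu^{2b}\delta^{-1}\in\N$ implies $\nu^{b}\delta^{-1}\in\N$ (needed for the interchange step) is also correct, so all three hypotheses are met under the single stated integrality assumption.
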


This corollary should be compared to the trivial estimate obtained from Lemma \ref{triv} which implies
$\mc{M}_{b, b}(\delta, \nu) \lsm D(\delta/\nu^b)$.

\subsection{The $O_{\vep}(\delta^{-\vep})$ bound}
We now prove that $D(\delta) \lsm_{\vep} \delta^{-\vep}$. The structure of the argument is essentially the same
as that in Section \ref{fc_iter}.
Repeatedly iterating Corollary \ref{upgrade} and following the same steps in how we derived Lemma \ref{m11iter} and Corollaries \ref{decrec} and \ref{core}
gives the following result.
\begin{lemma}\label{iter1}
Let $N$ be an integer chosen sufficiently large later and let $\delta$ be
such that $\delta^{-1/2^{N}} \in \N$ and $0 < \delta < 100^{-2^{N}}$.
Then
\begin{align*}
D(\delta) \lsm D(\delta^{1 - \frac{1}{2^N}}) + \delta^{-\frac{4}{3 \cdot 2^N}}\prod_{j = 0}^{N - 1}D(\delta^{1 - \frac{1}{2^{N - j}}})^{\frac{1}{2^{j + 1}}}.
\end{align*}
\end{lemma}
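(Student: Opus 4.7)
The plan is to mirror the argument of Section \ref{fc} (compare Lemma \ref{m11iter} combined with Corollary \ref{core}), substituting the constants $\mc{M}_{a,b}$ for $M_{a,b}$ and using Corollary \ref{upgrade} in place of the combination of Lemmas \ref{switch} and \ref{abup}. The final parameter choice will be $\nu := \delta^{1/2^N}$, and the hypotheses $\delta^{-1/2^N} \in \N$ together with $\delta < 100^{-2^N}$ guarantee $\nu \in \N^{-1} \cap (0, 1/100)$ as well as the integrality $\nu^{2^{j+1}}\delta^{-1} \in \N$ for every $0 \le j \le N-1$ needed to invoke Corollary \ref{upgrade} at each dyadic scale.

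First I would apply the bilinear reduction (Lemma \ref{bi_red}) to obtain
\begin{align*}
D(\delta) \lsm D(\delta/\nu) + \nu^{-1}\mc{M}_{1,1}(\delta, \nu),
\end{align*}
reducing the problem to an estimate on $\mc{M}_{1,1}(\delta, \nu)$. Next I would iterate Corollary \ref{upgrade} along the dyadic chain $b = 1, 2, 4, \ldots, 2^{N-1}$: at the $j$-th step Corollary \ref{upgrade} (with $b = 2^j$) is used to bound the factor $\mc{M}_{2^j, 2^j}^{1/2^j}$ currently present in the running estimate, introducing an additional factor of $\nu^{-1/(6 \cdot 2^j)}$. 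The accumulated powers of $\nu^{-1/6}$ thus form the geometric sum $\sum_{j=0}^{N-1} \frac{1}{6 \cdot 2^j} = \tfrac{1}{3}(1 - 2^{-N}) < \tfrac{1}{3}$, yielding
\begin{align*}
\mc{M}_{1,1}(\delta, \nu) \lsm \nu^{-1/3}\,\mc{M}_{2^N, 2^N}(\delta, \nu)^{1/2^N}\prod_{j=0}^{N-1} D(\delta/\nu^{2^j})^{1/2^{j+1}}.
\end{align*}

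I would then absorb the endpoint constant via the trivial estimate of Lemma \ref{triv}, which gives $\mc{M}_{2^N, 2^N}(\delta, \nu) \lsm D(\delta/\nu^{2^N})$. With $\nu = \delta^{1/2^N}$ one has $\delta/\nu^{2^N} = 1$, so $\mc{M}_{2^N, 2^N}^{1/2^N} \lsm 1$; moreover $\nu^{-1} \cdot \nu^{-1/3} = \nu^{-4/3} = \delta^{-4/(3 \cdot 2^N)}$ and $\delta/\nu^{2^j} = \delta^{1 - 1/2^{N-j}}$. Substituting into the bilinear reduction produces exactly the claimed recursion.

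There is no serious obstacle once Corollary \ref{upgrade}, Lemma \ref{triv}, and Lemma \ref{bi_red} are in hand. The only point requiring care is the bookkeeping of the exponents on $\nu^{-1}$ across the iteration and verifying that the geometric series remains strictly below $1/3$, which is precisely what ensures the combined $\nu^{-1} \cdot \nu^{-1/3}$ collapses to the advertised $\delta^{-4/(3 \cdot 2^N)}$ after the final substitution.
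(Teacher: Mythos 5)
Your proposal is correct and matches the paper's proof: iterate Corollary \ref{upgrade} across the dyadic scales $b = 2^j$ to bound $\mc{M}_{1,1}$, absorb the endpoint with Lemma \ref{triv} after setting $\nu = \delta^{1/2^N}$ (so $\mc{M}_{2^N,2^N}(\delta,\nu)^{1/2^N} \lsm D(1)^{1/2^N} \lsm 1$), and close with the bilinear reduction Lemma \ref{bi_red}. The only cosmetic difference is that you invoke the bilinear reduction first and the paper invokes it last, which changes nothing.
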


Trivial bounds for $D(\delta)$ show that $1 \lsm D(\delta) \lsm \delta^{-1/2}$ for all $\delta \in \N^{-1}$.
Let $\ld$ be the smallest real number such that $D(\delta) \lsm_{\vep} \delta^{-\ld - \vep}$ for all $\delta \in \N^{-1}$.
From the trivial bounds $\ld \in [0, 1/2]$. We claim $\ld = 0$. Suppose $\ld > 0$.

Let $N$ be a sufficiently large integer $\geq \frac{8}{3\ld}$. This implies $$1 + \frac{N}{2} - \frac{4}{3\ld} \geq 1.$$
Lemma \ref{iter1} then implies that for $\delta$ such that $\delta^{-1/2^{N}} \in \N$ and $0 < \delta < 100^{-2^{N}}$, we have
\begin{align*}
D(\delta) \lsm_{\vep} \delta^{-\ld(1 - \frac{1}{2^N}) - \vep} + \delta^{-\ld(1 - \frac{1}{2^N}(1 + \frac{N}{2} - \frac{4}{3\ld})) - \vep} \lsm_{\vep} \delta^{-\ld(1 - \frac{1}{2^N}) - \vep}
\end{align*}
where the last inequality we have applied our choice of $N$.
By almost multiplicity we then have the same estimate for all $\delta \in \N^{-1}$ (with a potentially larger constant depending on $N$).
But this then contradicts minimality of $\ld$. Therefore $\ld = 0$.

\section{Unifying two styles of proof}\label{bds}
We now attempt to unify the Bourgain-Demeter style of decoupling and the style of decoupling mentioned in
the previous section.
In view of Corollary \ref{upgrade}, instead of having two integer parameters $a$ and $b$ we just have one integer parameter.

Let $b$ be an integer $\geq 1$ and choose $s \in [2, 3]$ any real number.
Suppose $\delta \in \N^{-1}$ and $\nu \in \N^{-1} \cap (0, 1/100)$ were such that $\nu^{b}\delta^{-1} \in \N$.
Let $\mb{M}_{b}^{(s)}(\delta, \nu)$ be the best constant such that
\begin{align}\label{mb_bds}
\begin{aligned}
\avg{\Delta \in P_{\nu^{-b}}(B)}&(\sum_{J \in P_{\nu^{b}}(I)}\nms{\E_{J}g}_{L^{2}_{\#}(w_\Delta)}^{2})^{\frac{s}{2}}(\sum_{J' \in P_{\nu^{b}}(I')}\nms{\E_{J'}g}_{L^{2}_{\#}(w_\Delta)}^{2})^{\frac{6 - s}{2}}\\
& \leq \mb{M}_{b}^{(s)}(\delta, \nu)^{6}(\sum_{J \in P_{\delta}(I)}\nms{\E_{J}g}_{L^{6}_{\#}(w_B)}^{2})^{\frac{s}{2}}(\sum_{J' \in P_{\delta}(I')}\nms{\E_{J'}g}_{L^{6}_{\#}(w_B)}^{2})^{\frac{6 - s}{2}}
\end{aligned}
\end{align}
for all squares $B$ of side length $\delta^{-2}$, $g: [0, 1] \rightarrow \C$, and all intervals $I, I' \in P_{\nu}([0, 1])$ which are $\nu$-separated.
Note that left hand side of the definition of $\mb{M}_{b}^{(3)}(\delta, \nu)$ is the same as $A_{6}(q, B^r, q)^{6}$ defined in \cite{sg}
and from the uncertainty principle, $\mb{M}_{1}^{(2)}(\delta, \nu)$ is morally the same as $M_{1, 1}(\delta, \nu)$ defined in \eqref{mabdef}
and $\mc{M}_{1, 1}(\delta, \nu)$ defined in \eqref{bik_const}. The $l^2$ piece in the definition of $\mb{M}_{b}^{(s)}(\delta, \nu)$
is chosen so that we can make the most out of applying $l^2 L^2$ decoupling.

We will use $\mb{M}_{b}^{(s)}$ as our bilinear constant in this section to show that $D(\delta) \lsm_{\vep} \delta^{-\vep}$.
The bilinear constant $\mb{M}_{b}^{(s)}$ obeys essentially the same lemmas as in the previous sections.

\begin{lemma}[cf. Lemmas \ref{mabtriv} and \ref{triv}]\label{mbtriv}
If $\delta$ and $\nu$ were such that $\nu^{b}\delta^{-1} \in \N$, then
\begin{align*}
\mb{M}_{b}^{(s)}(\delta, \nu) \lsm D(\frac{\delta}{\nu^b}).
\end{align*}
\end{lemma}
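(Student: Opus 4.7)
The plan is to bound $\mb{M}_{b}^{(s)}(\delta,\nu)$ by the natural ``decouple then rescale'' strategy, mirroring the proofs of Lemmas~\ref{mabtriv} and~\ref{triv} but adapted to the $\ell^{2}$-sum structure present on both sides of~\eqref{mb_bds}. The key observation is that $|J|=\nu^{b}$ is the reciprocal of the side length of $\Delta$, so the weighted Bernstein inequality (Lemma~\ref{loc_bern}) together with the trivial H\"older comparison gives the equivalence $\nms{\E_{J}g}_{L^{2}_{\#}(w_{\Delta})}\sim\nms{\E_{J}g}_{L^{6}_{\#}(w_{\Delta})}$; hence I may freely replace the $L^{2}_{\#}(w_{\Delta})$ norms on the left-hand side of~\eqref{mb_bds} by $L^{6}_{\#}(w_{\Delta})$ norms.

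I would then apply H\"older's inequality on the averaging in $\Delta$ with exponents $6/s$ and $6/(6-s)$ to separate the two factors, reducing matters to controlling a single-side quantity of the shape $(\avg{\Delta}(\sum_{J}\nms{\E_{J}g}_{L^{6}_{\#}(w_{\Delta})}^{2})^{3})^{1/3}$ (and the analog for $J'$). To move the $\ell^{2}$ sum over $J$ outside the averaging without incurring an $\ell^{1}$ loss, I would invoke the generalized Minkowski inequality $\|(\sum_{J}|f_{J}|^{2})^{1/2}\|_{L^{p}}\le(\sum_{J}\|f_{J}\|_{L^{p}}^{2})^{1/2}$, valid for $p\ge 2$, applied with $p=6$ in the averaging measure over $\Delta$. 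The standard pointwise bound $\sum_{\Delta}w_{\Delta}\lsm w_{B}$ then lets me replace each $\avg{\Delta}\nms{\E_{J}g}_{L^{6}_{\#}(w_{\Delta})}^{6}$ by $\nms{\E_{J}g}_{L^{6}_{\#}(w_{B})}^{6}$. With everything localized on the big ball $B$ of side $\delta^{-2}$, parabolic rescaling (Lemma~\ref{parab_outline}) applied to each $\E_{J}g$ decouples the scale $\nu^{b}$ down to $\delta$ at the cost of $D(\delta/\nu^{b})$, giving
\begin{align*}
\sum_{J\in P_{\nu^{b}}(I)}\nms{\E_{J}g}_{L^{6}_{\#}(w_{B})}^{2} \lsm D(\delta/\nu^{b})^{2}\sum_{K\in P_{\delta}(I)}\nms{\E_{K}g}_{L^{6}_{\#}(w_{B})}^{2},
\end{align*}
and similarly for the sum over $J'$. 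Raising the two $J$- and $J'$-sums to the powers $s/2$ and $(6-s)/2$ produces the total factor $D(\delta/\nu^{b})^{6}$.

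The main obstacle I anticipate is converting the $L^{6}_{\#}(w_{B})$ norms of the single-scale pieces $\E_{K}g$ into the $L^{2}_{\#}(w_{B})$ norms that actually appear on the right-hand side of~\eqref{mb_bds}. I plan to handle this by running Bernstein once more at the finer spatial scale (balls of side $\delta^{-1}$, matched to $|K|=\delta$) combined with the trivial $l^{2}L^{2}$ decoupling on $B$ (free by Plancherel, as in Lemma~\ref{l2l2_uw}); together these should allow each $\nms{\E_{K}g}_{L^{6}_{\#}(w_{B})}$ to be compared with $\nms{\E_{K}g}_{L^{2}_{\#}(w_{B})}$ up to absolute constants that can be absorbed into $D(\delta/\nu^{b})$. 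This is the same kind of Bernstein/$l^{2}L^{2}$ interplay already exploited in the proof of Lemma~\ref{bi_red}, so I expect the remaining details to be technical but routine.
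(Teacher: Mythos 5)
Your main chain of estimates is exactly the paper's proof: pass from $L^{2}_{\#}(w_{\Delta})$ to $L^{6}_{\#}(w_{\Delta})$ (for which the trivial H\"older direction suffices; the Bernstein half of your claimed equivalence is true at this matched scale but not needed), apply H\"older in the average over $\Delta$ with exponents $6/s$ and $6/(6-s)$, use Minkowski to pull the $\ell^{2}$ sum past the $L^{6}$ average in $\Delta$, use $\sum_{\Delta}w_{\Delta}\lsm w_{B}$, and finish with parabolic rescaling. Up to that point you and the paper agree.

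The appended final step is a genuine gap. You want $\nms{\E_{K}g}_{L^{6}_{\#}(w_{B})}\lsm\nms{\E_{K}g}_{L^{2}_{\#}(w_{B})}$ for $K\in P_{\delta}(I)$, but Bernstein (Lemmas \ref{bern_uw}, \ref{loc_bern}) reverses H\"older only on squares whose side length matches the dual scale of the cap, i.e.\ side $\delta^{-1}$ when $|K|=\delta$; it gives nothing on the square $B$ of side $\delta^{-2}$, and the inequality is in fact false there: for $g=1_{K}$ the wave packet $\E_{K}g$ concentrates on a single $\delta^{-1}\times\delta^{-2}$ tube inside $B$, so $\nms{\E_{K}g}_{L^{6}_{\#}(w_{B})}\sim\delta^{-1/3}\nms{\E_{K}g}_{L^{2}_{\#}(w_{B})}$. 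Nor can $l^{2}L^{2}$ decoupling help, since $K$ is a single cap and there is nothing to decouple. The obstacle you are trying to fix is an artifact of reading the $L^{2}_{\#}(w_{B})$ norms on the right-hand side of \eqref{mb_bds} literally: the paper's own proof stops at parabolic rescaling, i.e.\ the right-hand side of the definition is intended to carry $L^{6}_{\#}(w_{B})$ norms (compare \eqref{2nd} in the proof of Lemma \ref{mbup_bds}; the $L^{2}_{\#}(w_{B})$ in \eqref{mb_bds} is evidently a misprint). Indeed, under the literal $L^{2}_{\#}(w_{B})$ normalization the asserted bound cannot hold at all: with $\delta=\nu^{b}$ and $g$ a sum of two single wave packets (one $\delta$-cap in each of $I,I'$, with tubes through the center of $B$), the left side of \eqref{mb_bds} exceeds $D(\delta/\nu^{b})^{6}$ times the $L^{2}_{\#}$ right side by a factor of order $\delta^{-1}\nu^{-1}$, so no conversion step of the kind you propose can exist. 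Deleting that final step (and reading the definition with $L^{6}_{\#}(w_{B})$) leaves your argument complete and identical to the paper's.
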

\begin{proof}
Fix arbitrary $I_1, I_2 \in P_{\nu}([0, 1])$ which are $\nu$-separated.
Moving up from $L^{2}_{\#}$ to $L^{6}_{\#}$ followed by H\"{o}lder's inequality in the average over $\Delta$ bounds the left hand side of \eqref{mb_bds} by
\begin{align*}
(\avg{\Delta \in P_{\nu^{-b}}(B)}(\sum_{J \in P_{\nu^{b}}(I_1)}\nms{\E_{J}g}_{L^{6}_{\#}(w_{\Delta})}^{2})^{\frac{6}{2}})^{\frac{s}{6}}(\avg{\Delta \in P_{\nu^{-b}}(B)}(\sum_{J' \in P_{\nu^{b}}(I_2)}\nms{\E_{J}g}_{L^{6}_{\#}(w_{\Delta})}^{2})^{\frac{6}{2}})^{\frac{6 - s}{6}}.
\end{align*}
Using Minkowski to switch the $l^2$ and $l^6$ sum followed by $\sum_{\Delta}w_{\Delta} \lsm w_{B}$ shows that this is
\begin{align*}
\lsm (\sum_{J \in P_{\nu^{b}}(I_1)}\nms{\E_{J}g}_{L^{6}_{\#}(w_B)}^{2})^{\frac{s}{2}}(\sum_{J' \in P_{\nu^{b}}(I_2)}\nms{\E_{J'}g}_{L^{6}_{\#}(w_B)}^{2})^{\frac{6 - s}{2}}.
\end{align*}
Parabolic rescaling then completes the proof of Lemma \ref{mbtriv}.
\end{proof}

\begin{lemma}[Bilinear reduction, cf. Lemmas \ref{biv1} and \ref{bi_red}]\label{bds_bds}
Suppose $\delta$ and $\nu$ were such that $\nu\delta^{-1} \in \N$. Then
\begin{align*}
D(\delta) \lsm D(\frac{\delta}{\nu}) + \nu^{-1}\mb{M}_{1}^{(s)}(\delta, \nu).
\end{align*}
\end{lemma}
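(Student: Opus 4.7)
My plan is to follow the same template as Lemmas \ref{biv1} and \ref{bi_red}, only changing the H\"{o}lder split and the Bernstein step to accommodate the new bilinear constant. Let $\{I_i\}_{i = 1}^{\nu^{-1}} = P_{\nu}([0, 1])$ and begin from
\[
\nms{\E_{[0, 1]}g}_{L^{6}(B)}^{2} \leq \nmb{\sum_{i, j}|\E_{I_i}g||\E_{I_j}g|}_{L^{3}(B)},
\]
splitting the double sum into the diagonal portion $|i - j| \leq 3$ and the off-diagonal portion $|i - j| > 3$ (so that $d(I_i, I_j) \geq 3\nu \geq \nu$). The diagonal piece is handled verbatim as in Lemma \ref{biv1}: the triangle inequality, Cauchy-Schwarz, parabolic rescaling, and a final Cauchy-Schwarz yield a bound of the form $D(\delta/\nu)(\sum_{J \in P_{\delta}([0, 1])}\nms{\E_{J}g}_{L^{6}(w_{B})}^{2})^{1/2}$.

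For the off-diagonal piece, the triangle inequality first bounds the $L^3$ norm by $\nu^{-2}$ times the maximum over $\nu$-separated pairs $(i, j)$, so after taking square roots a factor of $\nu^{-1}$ is pulled out. For a fixed such pair I would H\"{o}lder-split with exponents tailored to $s$:
\[
\nms{|\E_{I_i}g||\E_{I_j}g|}_{L^{3}(B)} \leq \nms{|\E_{I_i}g|^{s/6}|\E_{I_j}g|^{(6-s)/6}}_{L^{6}(B)}\nms{|\E_{I_i}g|^{(6-s)/6}|\E_{I_j}g|^{s/6}}_{L^{6}(B)}.
\]
This replaces the $1/3, 2/3$ split used in Lemma \ref{biv1} and is chosen so that the sixth power of the first factor is $\int_{B}|\E_{I_i}g|^{s}|\E_{I_j}g|^{6-s}$, matching the asymmetric $(s, 6 - s)$ weighting on the left-hand side of the definition of $\mb{M}_{1}^{(s)}$.

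The key estimate is to bound each such $L^6$ norm by $\mb{M}_{1}^{(s)}$. Rewriting the sixth power as an average over $\Delta \in P_{\nu^{-1}}(B)$ and using Bernstein (Lemma \ref{loc_bern}) on each $\Delta$ via $\nms{\E_{I_i}g}_{L^{\infty}(\Delta)} \lsm \nms{\E_{I_i}g}_{L^{2}_{\#}(w_{\Delta})}$ (valid since $|I_i| = \nu = |\Delta|^{-1}$) to absorb the extra powers $s - 2$ and $4 - s$ leaves
\[
\avg{\Delta \in P_{\nu^{-1}}(B)}\nms{\E_{I_i}g}_{L^{2}_{\#}(w_{\Delta})}^{s}\nms{\E_{I_j}g}_{L^{2}_{\#}(w_{\Delta})}^{6-s},
\]
which, since $P_{\nu^{1}}(I_i) = \{I_i\}$, is exactly the left-hand side of the definition of $\mb{M}_{1}^{(s)}$ at $b = 1$. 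The definition controls it by $\mb{M}_{1}^{(s)}(\delta, \nu)^{6}$ times $(\sum_{J \in P_{\delta}(I_i)}\nms{\E_{J}g}_{L^{2}_{\#}(w_{B})}^{2})^{s/2}(\sum_{J' \in P_{\delta}(I_j)}\nms{\E_{J'}g}_{L^{2}_{\#}(w_{B})}^{2})^{(6-s)/2}$, and a H\"{o}lder step on the probability measure $w_{B}\, dx/\int w_{B}$ upgrades each $L^{2}_{\#}(w_{B})$ norm to $L^{6}_{\#}(w_{B})$, which is the correct scale for reconstituting the right-hand side of the definition of $D(\delta)$.

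Multiplying the two symmetric $L^{6}$ bounds recombines the $(s, 6 - s)$ weighting back into $(3, 3)$, giving $\nms{|\E_{I_i}g||\E_{I_j}g|}_{L^{3}(B)}^{1/2} \lsm \mb{M}_{1}^{(s)}(\delta, \nu) A_{i}^{1/4} A_{j}^{1/4}$, where $A_{i} := \sum_{J \in P_{\delta}(I_i)}\nms{\E_{J}g}_{L^{6}(w_{B})}^{2}$. The elementary bound $A_{i}^{1/4} A_{j}^{1/4} \leq (\sum_{k} A_{k})^{1/2}$ combined with the $\nu^{-1}$ prefactor produces an off-diagonal contribution of $\nu^{-1}\mb{M}_{1}^{(s)}(\delta, \nu)(\sum_{J \in P_{\delta}([0, 1])}\nms{\E_{J}g}_{L^{6}(w_{B})}^{2})^{1/2}$; combined with the diagonal estimate and the definition of $D(\delta)$ this yields the claim. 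The main obstacle is purely bookkeeping: tracking $L^{p}_{\#}$ versus $L^{p}$ normalizations and verifying that the $s$-dependent H\"{o}lder split dovetails with the asymmetric $(s, 6 - s)$ weighting inside $\mb{M}_{1}^{(s)}$. The only genuinely new ingredient beyond Lemma \ref{bi_red} is the use of Bernstein at exponent $2$, which is the natural match for the $l^2$ sum appearing in the definition of $\mb{M}_{1}^{(s)}$.
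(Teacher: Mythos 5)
Your proposal is correct and follows essentially the same route as the paper: the same diagonal/off-diagonal bilinearization from Lemma \ref{biv1}, the same $s$-dependent H\"{o}lder split as in \eqref{bdseq1}, and the same passage to $\avg{\Delta \in P_{\nu^{-1}}(B)}\nms{\E_{I_i}g}_{L^{2}_{\#}(w_{\Delta})}^{s}\nms{\E_{I_j}g}_{L^{2}_{\#}(w_{\Delta})}^{6-s}$ via Bernstein on each $\Delta$ before invoking the definition of $\mb{M}_{1}^{(s)}$. The only cosmetic difference is that the paper bounds the integrand by $\nms{\E_{I_i}g}_{L^{s}_{\#}(\Delta)}^{s}\nms{\E_{I_j}g}_{L^{\infty}(\Delta)}^{6-s}$ rather than peeling off the powers $s-2$ and $4-s$, and it leaves implicit the final normalization/H\"{o}lder bookkeeping you spell out.
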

\begin{proof}
Note that the left hand side of the definition of $\mb{M}_{1}^{(s)}(\delta, \nu)$ is
\begin{align*}
\avg{\Delta \in P_{\nu^{-1}}(B)}\nms{\E_{I_1}g}_{L^{2}_{\#}(w_{\Delta})}^{s}\nms{\E_{I_2}g}_{L^{2}_{\#}(w_{\Delta})}^{6 - s}.
\end{align*}
Proceeding as in the proof of Lemmas \ref{biv1} and \ref{bi_red},
for $I_i, I_j \in P_{\nu}([0, 1])$ which are $\nu$-separated, we have
\begin{align}\label{bdseq1}
\nms{|\E_{I_i}g||\E_{I_j}g|}_{L^{3}_{\#}(B)}^{1/2} \leq \nms{|\E_{I_i}g|^{\frac{s}{6}}|\E_{I_j}g|^{1 - \frac{s}{6}}}_{L^{6}_{\#}(B)}^{1/2}\nms{|\E_{I_i}g|^{1-\frac{s}{6}}|\E_{I_j}g|^{\frac{s}{6}}}_{L^{6}_{\#}(B)}^{1/2}.
\end{align}
We have
\begin{align*}
\nms{|\E_{I_i}g|^{\frac{s}{6}}|\E_{I_j}g|^{1 - \frac{s}{6}}}_{L^{6}_{\#}(B)}^{6} &= \avg{\Delta \in P_{\nu^{-1}}(B)}\frac{1}{|\Delta|}\int_{\Delta}|\E_{I_i}g|^{s}|\E_{I_j}g|^{6 - s}\\
&\leq \avg{\Delta \in P_{\nu^{-1}}(B)}\nms{\E_{I_i}g}_{L^{s}_{\#}(\Delta)}^{s}\nms{\E_{I_j}g}_{L^{\infty}(\Delta)}^{6 - s}\\
&\lsm \avg{\Delta \in P_{\nu^{-1}}(B)}\nms{\E_{I_i}g}_{L^{2}_{\#}(w_\Delta)}^{s}\nms{\E_{I_j}g}_{L^{2}_{\#}(w_\Delta)}^{6 - s}
\end{align*}
where the last inequality we have used Bernstein's inequality. Inserting this into \eqref{bdseq1} and applying the definition
of $\mb{M}_{1}^{(s)}(\delta, \nu)$ then completes the proof of Lemma \ref{bds_bds}.
\end{proof}

\begin{lemma}[Ball inflation, cf. Lemma \ref{ball_bik}]\label{ball_bds}
Let $b \geq 1$ be a positive integer.
Suppose $I_1$ and $I_2$ are $\nu$-separated intervals of length $\nu$. Then for any square $\Delta'$ of side length $\nu^{-2b}$
and any $\vep > 0$, we have
\begin{align*}
\avg{\Delta \in P_{\nu^{-b}}(\Delta')}&(\sum_{J \in P_{\nu^{b}}(I_1)}\nms{\E_{J}g}_{L^{s}_{\#}(w_{\Delta})}^{2})^{\frac{s}{2}}(\sum_{J' \in P_{\nu^{b}}(I_2)}\nms{\E_{J'}g}_{L^{6 - s}_{\#}(w_{\Delta})}^{2})^{\frac{6 - s}{2}}\\
&\lsm_{\vep} \nu^{-1 - b\vep}(\sum_{J \in P_{\nu^{b}}(I_1)}\nms{\E_{J}g}_{L^{s}_{\#}(w_{\Delta'})}^{2})^{\frac{s}{2}}(\sum_{J' \in P_{\nu^{b}}(I_2)}\nms{\E_{J'}g}_{L^{6 - s}_{\#}(w_{\Delta'})}^{2})^{\frac{6 - s}{2}}
\end{align*}
\end{lemma}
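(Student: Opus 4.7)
The argument generalizes Lemma \ref{ball_bik} to accommodate the $l^2$-sums over subintervals $J \in P_{\nu^b}(I_1)$ and $J' \in P_{\nu^b}(I_2)$. Assume without loss of generality that $\Delta'$ is centered at the origin. For each $J_1 \in P_{\nu^b}(I_1)$ with center $c_{J_1}$, tile $4\Delta'$ by a collection $\mc{T}_{J_1}$ of mutually parallel, non-overlapping $\nu^{-b} \times \nu^{-2b}$ rectangles with long axis in direction $(-2c_{J_1}, 1)$. Define the auxiliary function
\begin{align*}
F_{J_1}(x) := \sup_{y \in 2T_{J_1}(x)} \nms{\E_{J_1}g}_{L^s_\#(w_{B(y, \nu^{-b})})},
\end{align*}
setting $F_{J_1}(x) = 0$ outside $\bigcup_{T \in \mc{T}_{J_1}} T$, exactly as in Lemma \ref{ball_bik}. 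Construct $F_{J_2}$ analogously using $L^{6-s}_\#$ and tubes $\mc{T}_{J_2}$ oriented along $(-2c_{J_2}, 1)$. By construction each $F_{J_i}$ is constant on every tube in its collection, and since $\Delta \subset 2T_{J_i}(x)$ whenever $\Delta \in P_{\nu^{-b}}(\Delta')$ and $x \in \Delta$, we obtain the pointwise dominations $\nms{\E_{J_1}g}_{L^s_\#(w_\Delta)} \leq F_{J_1}(x)$ and $\nms{\E_{J_2}g}_{L^{6-s}_\#(w_\Delta)} \leq F_{J_2}(x)$.

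Combining these pointwise bounds with the $l^2$ power yields
\begin{align*}
\text{LHS of \eqref{ball_bds}} \leq \frac{1}{|\Delta'|} \int_{\Delta'} \bigl(\sum_{J_1} F_{J_1}^2\bigr)^{s/2} \bigl(\sum_{J_2} F_{J_2}^2\bigr)^{(6-s)/2}\, dx.
\end{align*}
Writing $F_{J_1} = \sum_{T_1 \in \mc{T}_{J_1}} a_{T_1, J_1} 1_{T_1}$ and likewise for $F_{J_2}$, the crucial geometric input is the transversality estimate $|T_1 \cap T_2| \lsm \nu^{-1-2b}$ for any $T_1 \in \mc{T}_{J_1}$, $T_2 \in \mc{T}_{J_2}$: indeed $|c_{J_1} - c_{J_2}| \gtrsim \nu$ since $J_1 \subset I_1$, $J_2 \subset I_2$, and $I_1, I_2$ are $\nu$-separated, so the angle between $T_1$ and $T_2$ is $\gtrsim \nu$. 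Since $s/2, (6-s)/2 \geq 1$, Minkowski's inequality allows us to bring these exponents inside the sums; combined with the transversality estimate this produces the decisive $\nu^{-1}$ factor. A weighted analog of \cite[Eq. (29)]{sg} (cf. \cite[Lemma 2.6.3]{thesis}), applied separately to each $J_1$ and each $J_2$, then upgrades the tube-indicator $l^2$ sums back to $\nms{\E_{J_1}g}_{L^s_\#(w_{\Delta'})}^2$ and $\nms{\E_{J_2}g}_{L^{6-s}_\#(w_{\Delta'})}^2$, reconstructing the $l^2$ sums on the right-hand side of \eqref{ball_bds}.

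The main obstacle is tracking the $\nu^{-b\vep}$ loss, which has no counterpart in Lemma \ref{ball_bik}. In Lemma \ref{ball_bik} there was a single pair of directions, so the wave packet picture was clean; here we have $\nu^{1-b}$ subintervals in each of $I_1$ and $I_2$, each contributing tubes with slightly different (though nearly parallel within a cone of aperture $\nu$) orientations. The wave-packet orthogonality between different $J_1$'s at scale $\nu^{-b}$ is not exact, and passing from the $L^s$/$L^{6-s}$ norms back to the $l^2$ sums via Minkowski and H\"{o}lder introduces constants controlled by the number of tubes, of size $O(\nu^{-O(b)})$. Such a polynomial loss in $\nu^{-1}$ is absorbed into the $\nu^{-b\vep}$ factor by choosing $\vep$ small: this is the standard trick in decoupling, where the $\vep$-room accommodates the Schwartz tails of the weights and the logarithmic overheads in the wave-packet bookkeeping, all of which propagate harmlessly through the $F_{J_i}$ construction.
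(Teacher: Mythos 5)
Your tube construction per interval $J$ and the transversality input $|T_1\cap T_2|\lsm \nu^{-1-2b}$ are fine (this is exactly the part of Lemma \ref{ball_bik} that the paper reuses), but the step that converts the $l^2$ sums into something you can treat pair by pair has a genuine gap, and the way you propose to pay for it cannot work. First, for exponents $s/2,(6-s)/2\ge 1$ the inequality goes the wrong way: $(\sum_J a_J)^{p}\ge \sum_J a_J^{p}$ for $p\ge 1$ and nonnegative $a_J$, so Minkowski does not let you ``bring the exponents inside the sums'' as an upper bound. What does give an upper bound is H\"older, $(\sum_J a_J)^{p}\le (\#\{J\})^{p-1}\sum_J a_J^{p}$, but this costs $(\# P_{\nu^{b}}(I_1))^{\frac{s}{2}-1}(\# P_{\nu^{b}}(I_2))^{\frac{6-s}{2}-1}\sim \nu^{-(b-1)}$, a loss polynomial in $\nu^{-1}$. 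Second, such a polynomial loss cannot be ``absorbed into the $\nu^{-b\vep}$ factor by choosing $\vep$ small'': $\nu^{-b\vep}$ is \emph{smaller} than any fixed power $\nu^{-cb}$, so you would be absorbing a large factor into a smaller one; and if a $\nu^{-O(b)}$ loss were tolerable here, the iteration in Lemmas \ref{mbup_bds} and \ref{bds_iterate_est} would no longer yield $D(\delta)\lsm_{\vep}\delta^{-\vep}$ at all. The $\vep$-room in the statement is not there to soak up wave-packet bookkeeping or weight tails; it covers only a sub-polynomial loss.

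The missing idea is dyadic pigeonholing, which is how the paper proceeds: first restrict (at the cost of the factor $\nu^{-b\vep}$) to families $\mc{F}_1,\mc{F}_2$ of intervals on which the values $\nms{\E_{J}g}_{L^{s}_{\#}(w_{\Delta'})}$ (resp. $\nms{\E_{J'}g}_{L^{6-s}_{\#}(w_{\Delta'})}$) are comparable up to a factor of $2$; then apply H\"older to pass from the $l^2$ sums to $\sum_{J\in\mc{F}_1}\nms{\E_{J}g}_{L^{s}_{\#}(w_{\Delta})}^{s}$ and $\sum_{J'\in\mc{F}_2}\nms{\E_{J'}g}_{L^{6-s}_{\#}(w_{\Delta})}^{6-s}$, paying $(\#\mc{F}_1)^{\frac{s}{2}-1}(\#\mc{F}_2)^{\frac{6-s}{2}-1}$; run your tube argument (the proof of Lemma \ref{ball_bik}) on that expression to gain the single factor $\nu^{-1}$; and finally use the comparability within $\mc{F}_1,\mc{F}_2$ to convert the $l^s$ and $l^{6-s}$ sums back into $l^2$ sums, recovering exactly the cardinality factors lost in the H\"older step. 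Without the comparability there is no way to undo that loss, which is precisely why the pigeonholing (and hence the $\nu^{-b\vep}$) appears in the statement. I recommend you rewrite the middle of your argument along these lines; your $F_{J}$ construction and the intersection estimate can then be quoted essentially verbatim for each pair $(J,J')$.
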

\begin{proof}
The proof proceeds as in the proof of ball inflation in \cite[Theorem 9.2]{sg} (see also \cite[Section 2.6]{thesis} for more details
and explicit constants in the specific case of the parabola).

From dyadic pigeonholing, since we can lose a $\nu^{-b\vep}$, it suffices to restrict the sum over $J$ and $J'$
to families $\mc{F}_1$ and $\mc{F}_2$ such that for all $J \in \mc{F}_1$, $\nms{\E_{J}g}_{L^{s}_{\#}(w_{\Delta'})}$
are comparable up to a factor of 2 and similarly for all $J' \in \mc{F}_2$. H\"{o}lder's inequality gives
\begin{align*}
&\avg{\Delta \in P_{\nu^{-b}}(\Delta')}(\sum_{J \in\mc{F}_1}\nms{\E_{J}g}_{L^{s}_{\#}(w_{\Delta})}^{2})^{\frac{s}{2}}(\sum_{J' \in \mc{F}_2}\nms{\E_{J'}g}_{L^{6 - s}_{\#}(w_{\Delta})}^{2})^{\frac{6 - s}{2}}\\
&\quad\leq (\# \mc{F}_1)^{\frac{s}{2} - 1}(\# \mc{F}_2)^{\frac{6 - s}{2} - 1}\avg{\Delta \in P_{\nu^{-b}}(\Delta')}(\sum_{J \in\mc{F}_1}\nms{\E_{J}g}_{L^{s}_{\#}(w_{\Delta})}^{s})(\sum_{J' \in \mc{F}_2}\nms{\E_{J'}g}_{L^{6 - s}_{\#}(w_{\Delta})}^{6-s}).
\end{align*}
The proof of Lemma \ref{ball_bik} shows that this is
\begin{align*}
\lsm \nu^{-1}(\# \mc{F}_1)^{\frac{s}{2} - 1}(\# \mc{F}_2)^{\frac{6 - s}{2} - 1}(\sum_{J \in\mc{F}_1}\nms{\E_{J}g}_{L^{s}_{\#}(w_{\Delta'})}^{s})(\sum_{J' \in \mc{F}_2}\nms{\E_{J'}g}_{L^{6 - s}_{\#}(w_{\Delta'})}^{6-s}).
\end{align*}
Since for $J \in \mc{F}_1$ the values of $\nms{\E_{J}g}_{L^{s}_{\#}(w_{\Delta'})}$ are comparable and similarly for $J' \in \mc{F}_2$, the above is
\begin{align*}
\lsm \nu^{-1}(\sum_{J \in\mc{F}_1}\nms{\E_{J}g}_{L^{s}_{\#}(w_{\Delta'})}^{2})^{\frac{s}{2}}(\sum_{J' \in \mc{F}_2}\nms{\E_{J'}g}_{L^{6 - s}_{\#}(w_{\Delta'})}^{2})^{\frac{6 - s}{2}}.
\end{align*}
This completes the proof of Lemma \ref{ball_bds}.
\end{proof}

\begin{lemma}[cf. Corollary \ref{upgrade}]\label{mbup_bds}
Suppose $\delta$ and $\nu$ were such that $\nu^{2b}\delta^{-1} \in \N$. Then for every $\vep > 0$,
\begin{align*}
\mb{M}_{b}^{(s)}(\delta, \nu) \lsm_{\vep} \nu^{-\frac{1}{6}(1 + b\vep)}\mb{M}_{2b}^{(s)}(\delta, \nu)^{1/2}D(\frac{\delta}{\nu^b})^{1/2}.
\end{align*}
\end{lemma}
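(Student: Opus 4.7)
The plan is to mimic the proof of Corollary \ref{upgrade} from Section \ref{bik}, now carried out within the $\mb{M}_b^{(s)}$ framework. First I would fix $\nu$-separated intervals $I_1, I_2 \in P_\nu([0, 1])$ and begin with the LHS of the inequality defining $\mb{M}_b^{(s)}(\delta, \nu)$. At the spatial scale $\Delta \in P_{\nu^{-b}}(B)$ each $J \in P_{\nu^b}(I_i)$ sits exactly at its dual (uncertainty) scale, so Bernstein (Lemma \ref{loc_bern}) combined with Jensen (using that $|\Delta|^{-1}\int w_\Delta \lsm 1$) shows $\nms{\E_J g}_{L^p_\#(w_\Delta)}$ is comparable across all $p \in [2, \infty]$. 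Using this I would replace the $L^2_\#(w_\Delta)$-norm in the first (respectively second) factor by $L^s_\#(w_\Delta)$ (respectively $L^{6-s}_\#(w_\Delta)$), putting the expression into the form needed for Lemma \ref{ball_bds}. Ball inflation then lifts the spatial scale from $\nu^{-b}$ to $\nu^{-2b}$ at a cost of $\nu^{-(1+b\vep)}$.

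Second, mirroring the chain $\mc{M}_{b,b} \to \mc{M}_{2b,b} \to \mc{M}_{b,2b}^{1/2} D(\delta/\nu^b)^{1/2} \to \mc{M}_{2b,2b}^{1/2} D(\delta/\nu^b)^{1/2}$ that produced Corollary \ref{upgrade}, I would perform on each scale-$\nu^{-2b}$ square $\Delta'$ the Hölder maneuver from Lemma \ref{interchange}: split each $L^s_\#(w_{\Delta'})$ and $L^{6-s}_\#(w_{\Delta'})$ factor as a product of an $L^2_\#(w_{\Delta'})$-quantity (to which $l^2 L^2$ decoupling applies, refining $P_{\nu^b}$ into $P_{\nu^{2b}}$, after which Bernstein at the finer cap scale, valid since $\nu^{2b}\cdot\nu^{-2b} = 1$, brings everything back to $L^2_\#(w_{\Delta'})$ and reproduces exactly the LHS of $\mb{M}_{2b}^{(s)}$'s defining inequality) and an $L^6_\#(w_{\Delta'})$-quantity (to which parabolic rescaling on the length-$\nu^b$ cap applies after the standard weight manipulation $\sum_{\Delta'} w_{\Delta'} \lsm w_B$, producing $D(\delta/\nu^b)$). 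A Cauchy-Schwarz over $\Delta'$ separates the two contributions, and on the sixth-power level one piece is bounded by $\mb{M}_{2b}^{(s)}(\delta, \nu)^3$ via its definition while the other is bounded by $D(\delta/\nu^b)^3$ via parabolic rescaling. Taking sixth roots and combining with the ball-inflation loss $\nu^{-(1+b\vep)}$ yields the claimed bound.

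The main obstacle is the bookkeeping in the second step: the Hölder split must distribute the exponents $s/2$ and $(6-s)/2$ so that after Cauchy-Schwarz the $\mb{M}_{2b}^{(s)}$-piece reproduces the precise internal $l^2$-structure of its defining inequality (with the same exterior exponents $s/2$ and $(6-s)/2$) while the $D(\delta/\nu^b)$-piece admits parabolic rescaling uniformly in $s$. The fact that the $D$-factor emerges with the same exponent $1/2$ for every $s \in [2, 3]$ is the structural feature that allows the iteration to close in the same fashion as for $\mc{M}_{b, b}$ in Section \ref{bik}.
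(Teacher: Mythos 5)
Your proposal matches the paper's proof in structure: Hölder to pass from $L^2_\#(w_\Delta)$ up to $L^s_\#$ and $L^{6-s}_\#$ (the one-sided Hölder estimate suffices; you do not need the full $L^p$-comparability you invoke), ball inflation (Lemma \ref{ball_bds}), the interpolation split of each $L^s$ and $L^{6-s}$ factor into $L^2$ and $L^6$ pieces, Cauchy--Schwarz over $\Delta'$, then $l^2L^2$ decoupling on the $L^2$ piece to reach the definition of $\mb{M}_{2b}^{(s)}$ and Minkowski plus parabolic rescaling on the $L^6$ piece to produce $D(\delta/\nu^b)$. Two small corrections to your accounting: no Bernstein step is needed after the $l^2L^2$ decoupling (that piece is already an $L^2_\#(w_{\Delta'})$ expression of the required form), and the exterior exponents on the refined $l^2$ sums come out as $(6-s)/2$ on $I_1$ and $s/2$ on $I_2$ --- swapped relative to the displayed definition --- which is harmless because $\mb{M}_{2b}^{(s)}$ is quantified over all $\nu$-separated pairs in $P_\nu([0,1])$; the identities $\theta s = (6-s)/2$ and $\vp(6-s)=s/2$ that make the bookkeeping close are automatic from the interpolation conditions $\tfrac{\theta}{2}+\tfrac{1-\theta}{6}=\tfrac1s$ and $\tfrac{\vp}{2}+\tfrac{1-\vp}{6}=\tfrac{1}{6-s}$.
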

\begin{proof}
Let $\ta$ and $\vp$ be such that $\frac{\ta}{2} + \frac{1 - \ta}{6} = \frac{1}{s}$ and
$\frac{\vp}{2} + \frac{1 - \vp}{6} = \frac{1}{6 - s}$. Then H\"{o}lder's inequality gives
$\nms{f}_{L^{s}} \leq \nms{f}_{L^{2}}^{\ta}\nms{f}_{L^{6}}^{1 - \ta}$
and $\nms{f}_{L^{6 - s}} \leq \nms{f}_{L^{2}}^{\vp}\nms{f}_{L^{6}}^{1 - \vp}$.

Fix arbitrary $I_1, I_2 \in P_{\nu}([0, 1])$ which are $\nu$-separated.
We have
\begin{align*}
&\avg{\Delta \in P_{\nu^{-b}}(B)}(\sum_{J \in P_{\nu^{b}}(I_1)}\nms{\E_{J}g}_{L^{2}_{\#}(w_{\Delta})}^{2})^{\frac{s}{2}}(\sum_{J' \in P_{\nu^{b}}(I_2)}\nms{\E_{J'}g}_{L^{2}_{\#}(w_{\Delta})}^{2})^{\frac{6 - s}{2}}\\
&\leq \avg{\Delta' \in P_{\nu^{-2b}}(B)}\avg{\Delta \in P_{\nu^{-b}}(\Delta')}(\sum_{J \in P_{\nu^{b}}(I_1)}\nms{\E_{J}g}_{L^{s}_{\#}(w_{\Delta})}^{2})^{\frac{s}{2}}(\sum_{J' \in P_{\nu^{b}}(I_2)}\nms{\E_{J'}g}_{L^{6-s}_{\#}(w_{\Delta})}^{2})^{\frac{6 - s}{2}}\\
&\lsm_{\vep} \nu^{-1 - b\vep}\avg{\Delta' \in P_{\nu^{-2b}}(B)}(\sum_{J \in P_{\nu^{b}}(I_1)}\nms{\E_{J}g}_{L^{s}_{\#}(w_{\Delta'})}^{2})^{\frac{s}{2}}(\sum_{J' \in P_{\nu^{b}}(I_2)}\nms{\E_{J'}g}_{L^{6-s}_{\#}(w_{\Delta'})}^{2})^{\frac{6 - s}{2}}
\end{align*}
where the first inequality is from H\"{o}lder's inequality and the second inequality is from ball inflation.
We now use how $\ta$ and $\vp$ are defined to return to a piece which we control by $l^2 L^2$ decoupling
and a piece which we can control by parabolic rescaling.
H\"{o}lder's inequality (as in the definition of $\ta$ and $\vp$) gives that the average above is bounded by
\begin{align*}
\avg{\Delta' \in P_{\nu^{-2b}}(B)}(\sum_{J \in P_{\nu^{b}}(I_1)}\nms{\E_{J}g}_{L^{2}_{\#}(w_{\Delta'})}^{2\ta}&\nms{\E_{J}g}_{L^{6}_{\#}(w_{\Delta'})}^{2(1 - \ta)})^{\frac{s}{2}}\times\\
&(\sum_{J' \in P_{\nu^{b}}(I_2)}\nms{\E_{J'}g}_{L^{2}_{\#}(w_{\Delta'})}^{2\vp}\nms{\E_{J'}g}_{L^{6}_{\#}(w_{\Delta'})}^{2(1 - \vp)})^{\frac{6 - s}{2}}.
\end{align*}
H\"{o}lder's inequality in the sum over $J$ and $J'$ shows that this is
\begin{align*}
\leq \avg{\Delta' \in P_{\nu^{-2b}}(B)}\bigg(&(\sum_{J \in P_{\nu^{b}}(I_1)}\nms{\E_{J}g}_{L^{2}_{\#}(w_{\Delta'})}^{2})^{\ta}(\sum_{J \in P_{\nu^{b}}(I_1)}\nms{\E_{J}g}_{L^{6}_{\#}(w_{\Delta'})}^{2})^{1 - \ta}\bigg)^{\frac{s}{2}}\times\\
&\bigg((\sum_{J' \in P_{\nu^{b}}(I_2)}\nms{\E_{J'}g}_{L^{2}_{\#}(w_{\Delta'})}^{2})^{\vp}(\sum_{J' \in P_{\nu^{b}}(I_2)}\nms{\E_{J'}g}_{L^{6}_{\#}(w_{\Delta'})}^{2})^{1 - \vp}\bigg)^{\frac{6 - s}{2}}.
\end{align*}
Since $\ta s = 3 - \frac{s}{2}$ and $\vp(6 - s) = \frac{s}{2}$, rearranging the above gives
\begin{align*}
\avg{\Delta' \in P_{\nu^{-2b}}(B)}\bigg(&(\sum_{J \in P_{\nu^{b}}(I_1)}\nms{\E_{J}g}_{L^{2}_{\#}(w_{\Delta'})}^{2})^{\frac{1}{2}(3 - \frac{s}{2})}(\sum_{J' \in P_{\nu^{b}}(I_2)}\nms{\E_{J'}g}_{L^{2}_{\#}(w_{\Delta'})}^{2})^{\frac{1}{2} \cdot \frac{s}{2}}\bigg)\times\\
&\bigg((\sum_{J \in P_{\nu^{b}}(I_1)}\nms{\E_{J}g}_{L^{6}_{\#}(w_{\Delta'})}^{2})^{\frac{1}{2}\cdot 3(\frac{s}{2} - 1)}(\sum_{J' \in P_{\nu^{b}}(I_2)}\nms{\E_{J'}g}_{L^{6}_{\#}(w_{\Delta'})}^{2})^{\frac{1}{2} \cdot 3(2 - \frac{s}{2})}\bigg).
\end{align*}
Applying the Cauchy-Schwarz inequality in the average over $\Delta'$ then bounds the above by
\begin{align}\label{two_term}
\begin{aligned}
&\bigg(\avg{\Delta' \in P_{\nu^{-2b}}(B)}(\sum_{J \in P_{\nu^{b}}(I_1)}\nms{\E_{J}g}_{L^{2}_{\#}(w_{\Delta'})}^{2})^{\frac{6-s}{2}}(\sum_{J' \in P_{\nu^{b}}(I_2)}\nms{\E_{J'}g}_{L^{2}_{\#}(w_{\Delta'})}^{2})^{\frac{s}{2}}\bigg)^{\frac{1}{2}}\times\\
&\bigg(\avg{\Delta' \in P_{\nu^{-2b}}(B)}(\sum_{J \in P_{\nu^{b}}(I_1)}\nms{\E_{J}g}_{L^{6}_{\#}(w_{\Delta'})}^{2})^{\frac{3(s - 2)}{2}}(\sum_{J' \in P_{\nu^{b}}(I_2)}\nms{\E_{J'}g}_{L^{6}_{\#}(w_{\Delta'})}^{2})^{\frac{3(4 - s)}{2}}\bigg)^{\frac{1}{2}}.
\end{aligned}
\end{align}
After $l^2 L^2$ decoupling, the first term in \eqref{two_term} is
\begin{align}\label{m2b}
\lsm \mb{M}_{2b}^{(s)}(\delta, \nu)^{3}(\sum_{J \in P_{\delta}(I_1)}\nms{\E_{J}g}_{L^{6}_{\#}(w_B)}^{2})^{\frac{1}{2}\cdot \frac{6-s}{2}}(\sum_{J' \in P_{\delta}(I_2)}\nms{\E_{J'}g}_{L^{6}_{\#}(w_B)}^{2})^{\frac{1}{2}\cdot \frac{s}{2}}.
\end{align}
H\"{o}lder's inequality in the average over $\Delta'$ bounds the second term in \eqref{two_term} by
\begin{align*}
(\avg{\Delta' \in P_{\nu^{-2b}}(B)}(\sum_{J \in P_{\nu^b}(I_1)}\nms{\E_{J}g}_{L^{6}_{\#}(w_{\Delta'})}^{2})^{\frac{6}{2}})^{\frac{s-2}{4}}(\avg{\Delta' \in P_{\nu^{-2b}}(B)}(\sum_{J \in P_{\nu^b}(I_1)}\nms{\E_{J}g}_{L^{6}_{\#}(w_{\Delta'})}^{2})^{\frac{6}{2}})^{\frac{4-s}{4}}.
\end{align*}
Applying Minkowski to interchange the $l^2$ and $l^6$ norms shows that this is
\begin{align*}
\lsm (\sum_{J \in P_{\nu^b}(I_1)}\nms{\E_{J}g}_{L^{6}_{\#}(w_B)}^{2})^{\frac{3(s - 2)}{4}}(\sum_{J' \in P_{\nu^b}(I_2)}\nms{\E_{J'}g}_{L^{6}_{\#}(w_B)}^{2})^{\frac{3(4-s)}{4}}.
\end{align*}
Parabolic rescaling bounds this by
\begin{align}\label{2nd}
D(\frac{\delta}{\nu^b})^{3}(\sum_{J \in P_{\delta}(I_1)}\nms{\E_{J}g}_{L^{6}_{\#}(w_B)}^{2})^{\frac{1}{2} \cdot\frac{3(s - 2)}{2}}(\sum_{J' \in P_{\delta}(I_2)}\nms{\E_{J'}g}_{L^{6}_{\#}(w_B)}^{2})^{\frac{1}{2} \cdot \frac{3(4 - s)}{2}}.
\end{align}
Combining \eqref{m2b} and \eqref{2nd} then completes the proof of Lemma \ref{mbup_bds}.
\end{proof}

With Lemma \ref{mbup_bds},
following the same steps in how we derived Lemma \ref{m11iter} and Corollaries \ref{decrec} and \ref{core} gives the following.
\begin{lemma}[cf. Corollary \ref{core} and Lemma \ref{iter1}]\label{bds_iterate_est}
Let $N$ be an integer chosen sufficient large later and let $\delta$
be such that $\delta^{-1/2^N} \in \N$ and $0 < \delta < 100^{-2^N}$.
Then
\begin{align*}
D(\delta) \lsm_{\vep} D(\delta^{1 - \frac{1}{2^N}}) + \delta^{-\frac{4}{3\cdot 2^N} - \frac{N\vep}{6\cdot 2^N}}\prod_{j = 0}^{N - 1}D(\delta^{1 - \frac{1}{2^{N - j}}})^{\frac{1}{2^{j + 1}}}.
\end{align*}
\end{lemma}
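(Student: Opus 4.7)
The plan is to follow exactly the template of Lemma \ref{iter1}, but using the refined iteration lemma \ref{mbup_bds} in place of Corollary \ref{upgrade}. First I would iterate Lemma \ref{mbup_bds} $N$ times along the dyadic sequence $b = 1, 2, 4, \ldots, 2^{N-1}$, writing at each step
\begin{align*}
\mb{M}_{2^{k-1}}^{(s)}(\delta,\nu) \lsm_{\vep} \nu^{-\frac{1}{6}(1 + 2^{k-1}\vep)} \mb{M}_{2^{k}}^{(s)}(\delta,\nu)^{1/2} D(\delta/\nu^{2^{k-1}})^{1/2},
\end{align*}
chaining these inequalities to obtain a bound for $\mb{M}_{1}^{(s)}(\delta,\nu)$ in terms of $\mb{M}_{2^N}^{(s)}(\delta,\nu)^{1/2^N}$, a power of $\nu^{-1}$, and the weighted product $\prod_{j=0}^{N-1}D(\delta/\nu^{2^j})^{1/2^{j+1}}$. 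I would then dispatch $\mb{M}_{2^N}^{(s)}$ by the trivial bound of Lemma \ref{mbtriv}, and finally specialize $\nu = \delta^{1/2^N}$ and invoke the bilinear reduction Lemma \ref{bds_bds}.

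The main bookkeeping is the exponent of $\nu$. Summing geometrically the contributions from each step gives the accumulated exponent
\begin{align*}
-\sum_{k=1}^{N}\frac{1+2^{k-1}\vep}{6\cdot 2^{k-1}} = -\frac{1}{6}\Bigl(2 - \tfrac{1}{2^{N-1}}\Bigr) - \frac{N\vep}{6} \geq -\frac{1}{3} - \frac{N\vep}{6}.
\end{align*}
Under the choice $\nu = \delta^{1/2^N}$ the integrality conditions $\nu^{2^k}\delta^{-1}\in\N$ all follow from the hypothesis $\delta^{-1/2^N}\in\N$, and the size condition $\nu < 1/100$ from $\delta < 100^{-2^N}$. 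The trivial bound contributes $D(\delta/\nu^{2^N}) = D(1)\lsm 1$, so
\begin{align*}
\mb{M}_{1}^{(s)}(\delta,\delta^{1/2^N}) \lsm_{\vep} \delta^{-\frac{1}{3\cdot 2^N}-\frac{N\vep}{6\cdot 2^N}}\prod_{j=0}^{N-1}D\bigl(\delta^{1-1/2^{N-j}}\bigr)^{1/2^{j+1}}.
\end{align*}

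Inserting this into the bilinear reduction Lemma \ref{bds_bds} at scale $\nu=\delta^{1/2^N}$ produces the term $D(\delta^{1-1/2^N})$ from the linear piece and, from the bilinear piece, picks up an extra factor $\nu^{-1} = \delta^{-1/2^N}$ which combines with the exponent above into $-\frac{1}{2^N} - \frac{1}{3\cdot 2^N} - \frac{N\vep}{6\cdot 2^N} = -\frac{4}{3\cdot 2^N} - \frac{N\vep}{6\cdot 2^N}$. This matches the statement exactly.

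The only real obstacle here is the careful accounting of the $\nu^{-b\vep}$ factors that Lemma \ref{mbup_bds} generates at each iteration scale $b$: because $b$ doubles while the overall weight $1/2^k$ halves, the $\vep$-loss at each step contributes an equal amount $\vep/6$, so after $N$ iterations one accumulates a total $N\vep/6$ (rather than something geometric), which is what produces the $N\vep$ in the exponent of $\delta^{-\frac{N\vep}{6\cdot 2^N}}$. Everything else is mechanical: geometric sums for the $\nu$-exponents, trivial relabeling of $D(\delta/\nu^{2^j}) = D(\delta^{1 - 1/2^{N-j}})$, and one application of the bilinear reduction to get the extra $\nu^{-1}$.
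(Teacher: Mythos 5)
Your proposal is correct and follows exactly the paper's route: iterate Lemma \ref{mbup_bds} along $b = 1, 2, \ldots, 2^{N-1}$, close with the trivial bound (Lemma \ref{mbtriv}), specialize $\nu = \delta^{1/2^N}$, and finish with the bilinear reduction (Lemma \ref{bds_bds}). Your bookkeeping of the $\nu$-exponent, in particular the observation that the $\nu^{-b\vep/6}$ losses accumulate linearly (one factor of $\vep/6$ per step) because the doubling of $b$ exactly cancels the halving of the weight, is precisely the computation the paper leaves implicit.
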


To finish, we proceed as at the end of the previous section. Let $\ld \in [0, 1/2]$ be the smallest
real such that $D(\delta) \lsm_{\vep} \delta^{-\ld - \vep}$. Suppose $\ld > 0$. Choose $N$ such that $$1 + \frac{N}{2} - \frac{4}{3\ld} \geq 1.$$
Then for $\delta$ such that $\delta^{-1/2^{N}} \in \N$ and $0 < \delta < 100^{-2^{N}}$, Lemma \ref{bds_iterate_est} gives
\begin{align*}
D(\delta) \lsm_{\vep} \delta^{-\ld(1 - \frac{1}{2^N}) - \vep} + \delta^{-\ld(1 - \frac{1}{2^N}(1 + \frac{N}{2} - \frac{4}{3\ld})) - \vep(1 - \frac{1}{2^N}) + \frac{N\vep}{2\cdot 2^N} -\frac{N\vep}{6 \cdot 2^N}} \lsm_{\vep} \delta^{-\ld(1 - \frac{1}{2^N}) - \vep}.
\end{align*}
Almost multiplicativity gives that $D(\delta) \lsm_{N, \vep} \delta^{-\ld(1 - \frac{1}{2^N}) - \vep}$ for all $\delta \in \N^{-1}$, contradicting
the minimality of $\ld$.

\section{Discussion on explicit constants}\label{explicitdis}
A close inspection of the proof of Theorem \ref{ef2d_main} reveals that there are two sources
of explicit constants, one from the various weight functions adapted to $B$ and another
from parabolic rescaling (Lemma \ref{parab_res}).
To keep the paper as self contained as possible, we expand upon where the various
explicit constants come from. Some details will only be briefly sketched as they can be found
with explicit constants in \cite[Sections 2.2-2.4]{thesis}.
The argument we present here for the explicit argument in Lemma \ref{parab_res} is
very slightly simpler and a bit different from that in \cite[Section 2.4]{thesis} but follows the same general philosophy.
We claim no optimality in any explicit constant.

\subsection{Polynomial decaying weights $w_{B, E}$}
For $x \in \R^2$ and $B$ a square centered at $c \in \R^2$ of side length $R$, let
$$w_{B, E}(x) := (1 + \frac{|x - c|}{R})^{-E}.$$
In this notation, the weight function $w_{B}$ defined in \eqref{wbdef} is equal to $w_{B, 100}$.
We include the dependence on $E$ to distinguish between absolute constants and the dependence on the
decay rate of $w_{B, E}$ and later in Lemma \ref{ineq} we will need to use two different $E$.
We also let $D(\delta, E)$ be the same definition as $D(\delta)$ in \eqref{decdef} except $w_B$ on
the right hand side is replaced with $w_{B, E}$.

First we have an easy observation in how $w_{B, E}$ interacts with shear matrices.
\begin{lemma}\label{shear}
Let $S = (\begin{smallmatrix} 1 & a\\0 & 1 \end{smallmatrix})$ with $|a| \leq 2$. Then
$w_{B(0, R), E}(Sx) \leq 3^{E}w_{B(0, R), E}(x)$.
\end{lemma}
\begin{proof}
Since $|a| \leq 2$, $\|S^{-1}\| \leq \sqrt{6}$.
Then $(\frac{1 + |S^{-1}y|}{1 + |y|})^{E} \leq \sqrt{6}^{E}$.
The lemma follows by setting $y = Sx/R$.
\end{proof}

Next, we have the following key property of $w_{B, E}$.
\begin{lemma}\label{wbconvolve}
Let $E \geq 10$. For $0 < R' \leq R$,
\begin{align}\label{wbconveq1}
w_{B(0, R), E} \ast w_{B(0, R'), E} \leq 4^{E}R'^{2}w_{B(0, R), E}.
\end{align}
We also have
\begin{align}\label{wbconveq2}
R^{2}w_{B(0, R), E} \leq 2^{E}1_{B(0, R)} \ast w_{B(0, R), E}.
\end{align}
\end{lemma}
\begin{proof}
We first prove \eqref{wbconveq1}.
We would like to give an upper bound for the expression
\begin{align*}
\frac{1}{R'^{2}}\int_{\R^2}(1 + \frac{|x - y|}{R})^{-E}(1 + \frac{|y|}{R'})^{-E}(1 + \frac{|x|}{R})^{E}\, dy
\end{align*}
depending only on $E$.
A change of variables in $y$ and rescaling $x$ shows that it suffices to give an upper bound for
\begin{align}\label{coeq3}
\int_{\R^2}(1 + |x - \frac{R'}{R}y|)^{-E}(1 + |y|)^{-E}(1 + |x|)^{E}\, dy
\end{align}
depending only on $E$.
If $|x| \leq 1$, then \eqref{coeq3} is
\begin{align*}
\leq 2^{E}\int_{\R^2}(1 + |y|)^{-E}\, dy \leq 2^{E}.
\end{align*}
If $|x| > 1$, then we split \eqref{coeq3} into
\begin{align}\label{conveq2}
(\int_{|x - \frac{R'}{R}y| \leq \frac{|x|}{2}} + \int_{|x - \frac{R'}{R}y| > \frac{|x|}{2}})(1 + |x - \frac{R'}{R}y|)^{-E}(1 + |y|)^{-E}(1 + |x|)^{E}\, dy.
\end{align}
In the case of the first integral in \eqref{conveq2}, $(R'/R)|y| \geq |x| - |x - (R'/R)y| \geq |x|/2$ and hence
\begin{align*}
&\int_{|x - \frac{R'}{R}y| \leq \frac{|x|}{2}}(1 + |x - \frac{R'}{R}y|)^{-E}(1 + |y|)^{-E}(1 + |x|)^{E}\, dy\\
&\leq \frac{(1 + |x|)^{E}}{(1 + (R/R')|x|/2)^{E}}\int_{\R^2}(1 + |x - \frac{R'}{R}y|)^{-E}\, dy \leq (4R'/R)^{E}(R/R')^{2} \leq 4^{E}.
\end{align*}
In the case of the second integral in \eqref{conveq2},
\begin{align*}
\int_{|x - \frac{R'}{R}y| > \frac{|x|}{2}}(1 + |x - \frac{R'}{R}y|)^{-E}(1 + |y|)^{-E}&(1 + |x|)^{E}\, dy\\
& \leq (\frac{1 + |x|}{1 + |x|/2})^{E}\int_{\R^2}(1 + |y|)^{-E}\, dy \leq 2^{E}.
\end{align*}
This then proves \eqref{wbconveq1}.

To prove \eqref{wbconveq2} it suffices to give a lower bound for
\begin{align*}
\frac{1}{R^2}\int_{B(0, R)}(1 + \frac{|x - y|}{R})^{-E}(1 + \frac{|x|}{R})^{E}\, dy
\end{align*}
which depends only on $E$.
As before, rescaling $x$ and a change of variables in $y$ gives that it suffices to give a
lower bound independent of $x$ for
\begin{align*}
\int_{B(0, 1)}(\frac{1 + |x|}{1 + |x - y|})^{E}\, dy \geq (\frac{1 + |x|}{2 + |x|})^{E} \geq 2^{-E}.
\end{align*}
This shows \eqref{wbconveq2} and completes the proof of Lemma \ref{wbconvolve}.
\end{proof}

We have the following immediate corollaries.

\begin{cor}\label{part}
Let $B$ be a square of side length $R$ and let $\mc{B}$ be a disjoint partition of $B$ into squares $\Delta$ with side length $R' < R$. Then
for $E \geq 10$,
\begin{align*}
\sum_{\Delta \in \mc{B}}w_{\Delta, E} \leq 16^{E} w_{B, E}.
\end{align*}
\end{cor}
\begin{proof}
It suffices to prove the case when $B$ is centered at the origin.
Since $\mc{B}$ is a disjoint partition of $B$, $\sum_{\Delta \in \mc{B}} 1_{\Delta} = 1_{B}.$
Convolve both sides by $w_{B(0, R'), E}$. For the left hand side use \eqref{wbconveq2} and for the right hand side
use that $1_{B} \leq 2^{E}w_{B, E}$ and \eqref{wbconveq1}.
\end{proof}
\begin{rem}
The only property we needed in the above proof is that $\sum_{\Delta \in \mc{B}} 1_{\Delta} \leq C1_B$
for some absolute constant $C$. In particular, the same proof will work with finitely overlapping covers and when $R/R' \not\in \N$.
\end{rem}

\begin{cor}\label{convint}
For $1 \leq p < \infty$ and $E \geq 10$,
\begin{align*}
\nms{f}_{L^{p}(w_{B(0, R), E})}^{p} \leq 2^{E}\int_{\R^2}\nms{f}_{L^{p}_{\#}(B(y, R))}^{p}w_{B(0, R), E}(y)\, dy.
\end{align*}
\end{cor}
\begin{proof}
Expanding the right hand side, we see the expression $\frac{2^E}{R^2}1_{B(0, R)} \ast w_{B(0, R), E}$ and then we use \eqref{wbconveq2}.
\end{proof}

\begin{cor}\label{up}
Let $I \subset [0, 1]$ and $\mc{P}$ be a disjoint partition of $I$.
Suppose for some $2 \leq p < \infty$, we have
\begin{align*}
\nms{\E_{I}g}_{L^{p}(B)} \leq C(\sum_{J \in \mc{P}}\nms{\E_{J}g}_{L^{p}(w_{B, E})}^{2})^{1/2}
\end{align*}
for all $g: [0, 1] \rightarrow \C$ and all squares $B$ of side length $R$. Then for each $E \geq 10$, we have
\begin{align*}
\nms{\E_{I}g}_{L^{p}(w_{B, E})} \leq 8^{E/p}C(\sum_{J \in \mc{P}}\nms{\E_{J}g}_{L^{p}(w_{B, E})}^{2})^{1/2}
\end{align*}
for all $g: [0, 1] \rightarrow \C$ and all squares $B$ of side length $R$.
\end{cor}
\begin{proof}
The hypothesis and Corollary \ref{convint} imply that
\begin{align*}
\nms{\E_{I}g}_{L^{p}(w_{B, E})}^{p} \leq 2^{E}R^{-2}C^{p}\int_{\R^2}(\sum_{J \in \mc{P}}\nms{\E_{J}g}_{L^{p}(w_{B(y, R), E})}^{2})^{p/2}w_{B, E}(y)\, dy.
\end{align*}
Since $p \geq 2$, applying Minkowski's inequality shows that this is
$$\leq 2^{E}R^{-2}C^{p}(\sum_{J \in \mc{P}}\nms{\E_{J}g}_{L^{p}(w_{B(0, R), E} \ast w_{B, E})}^{2})^{p/2}.$$
Applying \eqref{wbconveq1} then finishes the proof.
\end{proof}

\subsection{Schwartz weight $\eta_B$}
Given $B = B(c, R)$, in this section we explicitly construct a Schwartz function $\eta_B$ such that
$\eta_B \geq 1_B$ and $\supp(\wh{\eta_B}) \subset B(0, 1/R)$.
It is easy to justify existence of such a function, but we desire explicit quantitative estimates.
A different Schwartz function was constructed in \cite[Section 2.2.2]{thesis} but the construction we provide here is slightly simpler in exposition.

\begin{lemma}\label{1dconst}
Fix $E \geq 100$.
There exists a Schwartz function $\psi$ on $\R$ such that $\psi \geq 1_{[-1/2, 1/2]}$, $\supp(\wh{\psi}) \subset [-1/2, 1/2]$
and
$$|\psi(x)| \leq \frac{2^{E}E^{2E}}{(1 + |x|)^{E}}.$$
\end{lemma}
\begin{proof}
For $\xi \in \R$, let $H_{a}(\xi) := \frac{1}{a}1_{[0, a]}(\xi)$ and define the sequence $a_{j} := \frac{3/\pi^2}{(j + 1)^2}$
for $j \geq 0$. From Theorem 1.3.5 of \cite{hormander}, the function $$U(\xi) := \lim_{k \rightarrow \infty} (H_{a_0} \ast \cdots \ast H_{a_k})(\xi)$$
is a smooth function supported on $[0, 1/2]$ obeying the bounds
$$|U^{(k)}(\xi)| \leq \frac{2^k}{\prod_{j = 0}^{k}a_{j}} = \frac{\pi^2}{3}(\frac{2\pi^2}{3})^{k}(k + 1)!^{2}$$
for $\xi \in [0, 1/2]$.
Observe that $\wc{H_{a}}(x) = e^{\pi i xa}\frac{\sin (\pi xa)}{\pi x a}$
and hence
$$\wc{U}(x) = \prod_{j = 0}^{\infty}e^{\pi i x a_{j}}\frac{\sin (\pi xa_{j})}{\pi x a_{j}}.$$
Let
\begin{align*}
\psi(x) := 2|\wc{U}(x)|^{2} = 2\prod_{j = 0}^{\infty}(\frac{\sin (\pi xa_{j})}{\pi x a_{j}})^{2}.
\end{align*}
Note that $\psi \geq 0$ and for $|x| \leq 1/2$, $\psi(x) \geq \psi(1/2) \geq 1$. Expanding $|\wc{U}|^{2} = \wc{U}\ov{\wc{U}}$, $\wh{\psi} = 2(U \ast \wt{U})$
where $\wt{U}(y) := -\ov{U(-y)}$. Since $U$ is supported on $[0, 1/2]$ and $\wt{U}$ is supported on $[-1/2, 0]$, then $\wh{\psi}$ is supported in $[-1/2, 1/2]$.

Finally we derive some bounds on the decay of $\psi$. The support of $U$ and integration by parts gives that for any $j \geq 1$,
\begin{align*}
|\wc{U}(x)| \leq \frac{1}{(2\pi |x|)^{j}}\nms{U^{(j)}}_{L^{1}([0, 1/2])} \leq \frac{\pi^2}{6}(\frac{\pi}{3})^{j}(j + 1)^{2}\frac{j^{2j}}{|x|^{j}}.
\end{align*}
and hence applying this bound to $j = \lceil E/2 \rceil$ gives
\begin{align*}
|\psi(x)| \leq 2(\frac{\pi^2}{6})^{2}(\frac{\pi}{3})^{2(\frac{E}{2} + 1)}(\frac{E}{2} + 2)^{4}\frac{(3E/5)^{2E + 4}}{|x|^{E}} \leq \frac{\pi^6}{1250}(\frac{3\pi}{25})^{E}E^{8}\frac{E^{2E}}{|x|^E} \leq \frac{E^{2E}}{|x|^E}.
\end{align*}
For $|x| \geq 1$, then $(1 + |x|)^{E}|\psi(x)| \leq 2^{E}E^{2E}$
and for $|x| \leq 1$,
$(1 + |x|)^{E}|\psi(x)| \leq 2 \cdot 2^{E}$.
This completes the proof of Lemma \ref{1dconst}.
\end{proof}

\begin{cor}\label{etaweightwb}
For $x = (x_1, x_2)$, let $\eta(x) := \psi(x_1)\psi(x_2)$.
Fix a square $B = B(c, R)$ of side length $R$. The function $\eta_{B}(x) = \eta(\frac{x - c}{R})$ satisfies
$\eta_{B} \geq 1_{B}$, $\supp(\wh{\eta_B}) \subset B(0, 1/R)$, and for any $E \geq 100$,
$$\eta_{B}(x) \leq 2^{2E}E^{4E} w_{B, E}(x).$$
\end{cor}

\subsection{Explicit proof of Lemma \ref{parab_res}}\label{parab_res_const_sec}
We now discuss how we arrived at the explicit constant $10^{16000}$ in Lemma \ref{parab_res}.
The argument we present here is slightly simpler than the one for the last centered equation in \cite[Page 58]{thesis},
but the argument is essentially the same.

We will need a few more different decoupling constants.
First we have a global decoupling constant.
\begin{defn}
For $J \in P_{\delta}([0, 1])$, let
\begin{align*}
\ta_{J, \delta} := \{(s, s^2 + t): s \in J, |t| \leq \delta^2\}
\end{align*}
and $\Theta_{\delta} := \bigcup_{J \in P_{\delta}([0, 1])}\ta_{J, \delta} = \ta_{[0, 1], \delta}$.
Let $D^{global}(\delta)$ be the best constant such that
\begin{align*}
\nms{f}_{L^{6}(\R^2)} \leq D^{global}(\delta)(\sum_{J \in P_{\delta}([0, 1])}\nms{f_{\ta_{J, \delta}}}_{L^{6}(\R^2)}^{2})^{1/2}
\end{align*}
for all $f$ with Fourier support in $\Theta_{\delta}$.
\end{defn}
This decoupling constant is the easiest to use when wanting to prove various functional
properties of the decoupling constant like monotonicity and parabolic rescaling.

\begin{lemma}[Parabolic rescaling for $D^{global}(\delta)$]\label{globalparab}
Let $I \subset [0, 1]$ be an interval of length $\sigma$ such that $0 < \delta < \sigma < 1$ and
$\sigma, \delta, \delta/\sigma \in \N^{-1}$. Then
\begin{align*}
\nms{f_{\ta_{I, \delta}}}_{L^{6}(\R^2)} \leq D^{global}(\frac{\delta}{\sigma})(\sum_{J \in P_{\delta}(I)}\nms{f_{\ta_{J, \delta}}}_{L^{6}(\R^2)}^{2})^{1/2}.
\end{align*}
\end{lemma}
\begin{proof}
Writing $I = [a, a + \sigma]$, we have
\begin{align*}
|(f_{\ta_{I, \delta}})(x)| &= |\int_{a}^{a + \sigma}\int_{-\delta^2}^{\delta^2}\wh{f}(s, s^2 + t)e(sx_1 + s^{2}x_2)e(tx_2)\, dt\, ds|\\
&= \sigma^{3}|\int_{0}^{1}\int_{-(\frac{\delta}{\sigma})^2}^{(\frac{\delta}{\sigma})^2}\wh{f}(\sigma s' + a, (\sigma s' + a)^{2} + \sigma^{2} t')e(s'(\sigma x_1 + 2a\sigma x_2)\\
&\hspace{2.5in}+ (s'^{2} + t')(\sigma^{2}x_2))\, dt'\, ds'|.
\end{align*}
Also observe that
\begin{align*}
\wh{f}(\sigma s' + a, (\sigma s' + a)^{2} + \sigma^{2} t') = \sigma^{-3}\wh{(F \circ L_{\sigma, a}^{-1})}(s', s'^{2} + t')
\end{align*}
where $F(x) = f(x)e^{-2\pi i x\cdot (a, a^2)}$ and
$$L_{\sigma, a} := \begin{pmatrix} \sigma & 2a\sigma\\0 & \sigma^2\end{pmatrix}.$$
Thus
\begin{align*}
|(f_{\ta_{I, \delta}})(x)| = |(F \circ L_{\sigma, a}^{-1})_{\ta_{[0, 1], (\delta/\sigma)^{2}}}(L_{\sigma, a}x)|.
\end{align*}
Then we apply the definition of $D^{global}(\delta/\sigma)$ and reverse the change of variables which completes the proof of Lemma \ref{globalparab}.
\end{proof}

Having proven Lemma \ref{globalparab} we are now almost done, essentially we just need to apply $f = \eta_{B}\E_{I}g$ to the above lemma and
use that $D^{global}(\delta) \lsm_{E} D(\delta, E)$ (as mentioned in \cite[Remark 5.2]{bd} and essentially follows as a corollary from \cite[Theorem 5.1]{sg}).
There are two small but fixable problems with this argument. The first is that $\eta_{B}\E_{I}g$ is has Fourier support in a region slightly larger than $\ta_{I, \delta}$
and so $(\eta_{B}\E_{I}g)_{\ta_{I, \delta}}$ is not necessarily equal to $\eta_{B}\E_{I}g$ and so
we will instead apply Lemma \ref{globalparab} to $f = \eta_{B}\E_{[a + \delta, a + \sigma - \delta]}g$.
The second is that the $\lsm_{E}$ in the estimate $D^{global}(\delta) \lsm_{E} D(\delta, E)$ is not made explicit.

\begin{rem}
To avoid the use of any equivalence of decoupling constants, one can instead just use Lemma \ref{globalparab} and suitably modify the Lemmas \ref{almostmult}-\ref{abup}.
This is the approach taken in \cite{guoliyung, glyzk} and in Tao's 247B notes on decoupling \cite{taonotes} (whose proof of parabola decoupling is based
off the argument in this paper).
We don't take this point of view here since it somewhat obscures the connection between efficient congruencing and decoupling, in particular when
comparing the proof of Lemma \ref{abup} with \cite[Lemma 4.4]{pierce}.
\end{rem}

Some equivalences between various decoupling constants were made quantitative by the author in \cite[Proposition 2.3.11]{thesis}
and we will use this result here as a black box (the proof is quite similar to that of \cite[Theorem 5.1]{sg}). To state the
relevant part of \cite[Proposition 2.3.11]{thesis} used here, we define two more decoupling constants.

\begin{defn}
For $J = [n_J \delta, (n_J + 1)\delta] \in P_{\delta}([0, 1])$, let
\begin{align*}
\ta_{J, \delta}' := \{(s, L_{J}(s) + t): n_J \delta \leq s \leq (n_J + 1)\delta, |t| \leq 5\delta^{2}\}
\end{align*}
where $L_{J}(s) := (2n_J + 1)\delta s - n_{J}(n_{J} + 1)\delta^{2}$ and $0 \leq n_{J} \leq \delta^{-1} - 1$.
Here $\ta_{J, \delta}'$ is a parallelogram that has height $10\delta^{2}$ and has base parallel to the
straight line connecting $(n_{J}\delta, n_{J}^{2}\delta^{2})$ and $((n_{J} + 1)\delta, (n_{J} + 1)^{2}\delta^{2})$.

Let $\Theta'_{\delta} := \bigcup_{J \in P_{\delta}([0, 1])}\ta_{J, \delta}'$.
Let $D_{par}^{global}(\delta)$ be the best constant such that
\begin{align*}
\nms{f}_{L^{6}(\R^2)} \leq D_{par}^{global}(\delta)(\sum_{J \in P_{\delta}([0, 1])}\nms{f_{\ta_{J, \delta}'}}_{L^{6}(\R^2)}^{2})^{1/2}
\end{align*}
for all $f$ with Fourier support in $\Theta'_{\delta}$.

Let $D_{par}^{local}(\delta, E)$ be the best constant such that
\begin{align*}
\nms{f}_{L^{6}(B)} \leq D_{par}^{local}(\delta, E)(\sum_{J \in P_{\delta}([0, 1])}\nms{f_{\ta_{J, \delta}'}}_{L^{6}(w_{B, E})}^{2})^{1/2}
\end{align*}
for all $f$ with Fourier support in $\Theta'_{\delta}$ and all squares $B$ of side length $\delta^{-2}$.
\end{defn}

\begin{lemma}\label{ineq}
For $E \geq 100$, $$D^{global}(\delta) \leq E^{64E}D(\delta, E).$$
\end{lemma}
\begin{proof}
This lemma is the consequence of the following string of inequalities:
\begin{align}\label{inlem1}
D^{global}(\delta) \leq D_{par}^{global}(\delta) \leq 2^{E}D_{par}^{local}(\delta, \lfloor \frac{E - 7}{2}\rfloor) \leq 2^{E}E^{63E}D(\delta, E)
\end{align}
from which the lemma immediately follows.
The third inequality in \eqref{inlem1} is the last inequality in the statement of \cite[Proposition 2.3.11]{thesis} (written as $E^{7E}\wh{D}_{p, G}(\delta) \leq E^{70E}D_{p, E}(\delta)$).

For each $J \in P_{\delta}([0, 1])$, we have $\ta_{J, \delta} \subset \ta_{J', \delta}$ and hence the first inequality follows.
To prove the second inequality, we let $f$ be a function which is Fourier supported in $\Theta'_{\delta}$. Partition $\R^2$ into squares $B$
of side length $\delta^{-2}$. Then
\begin{align*}
\nms{f}_{L^{6}(\R^2)} &= (\sum_{B}\nms{f}_{L^{6}(B)}^{6})^{1/6}\\
 &\leq D_{par}^{local}(\delta,  \lfloor \frac{E - 7}{2}\rfloor)(\sum_{B}(\sum_{J \in P_{\delta}([0, 1])}\nms{f_{\ta_{J}'}}_{L^{6}(w_{B,  \lfloor \frac{E - 7}{2}\rfloor})}^{2})^{3})^{1/6}\\
&\leq D_{par}^{local}(\delta,  \lfloor \frac{E - 7}{2}\rfloor)(\sum_{J \in P_{\delta}([0, 1])}\nms{f_{\ta_{J}'}}_{L^{6}(\sum_{B} w_{B, \lfloor \frac{E - 7}{2}\rfloor})}^{2})^{1/2}
\end{align*}
where the last inequality is by Minkowski's inequality.
The proof of Corollary \ref{part} (and \eqref{wbconveq2}) shows that $\sum_{B} w_{B,  \lfloor \frac{E - 7}{2}\rfloor} \leq 2^{\lfloor \frac{E - 7}{2}\rfloor}\int_{\R^2}(1 + |x|)^{-\lfloor \frac{E - 7}{2}\rfloor}\, dx \leq 2^{E}$.
This completes the proof of the second inequality and the proof of Lemma \ref{ineq}.
\end{proof}

\begin{rem}
The proof of the last inequality in \eqref{inlem1} is very similar to the proof of Theorem 5.1 of \cite{sg} except all the estimates are made explicit and quantitative.
We illustrate heuristically the main idea of \cite[Theorem 5.1]{sg} (and hence the last inequality in \eqref{inlem1}). We will ignore any weight functions $w_{B, E}$.

Define a decoupling constant $D^{local}(\delta)$ that is the same as the definition of $D_{par}^{local}(\delta)$ except that
$\ta_{J, \delta}'$ are replaced with $\ta_{J, \delta}$. Then we show heuristically why we expect $D^{local}(\delta) \lsm D(\delta)$.
Since $f$ is Fourier supported in $\{(s, s^2 + t): s \in [0, 1], |t| \leq \delta^2\}$.
Ignoring any $E$ dependence and weights $w_{B, E}$, we want to argue that
\begin{align*}
\nms{f}_{L^{6}(B)} \lsm D(\delta)(\sum_{J \in P_{\delta}([0, 1])}\nms{f_{\ta_J}}_{L^{6}(B)}^{2})^{1/2}
\end{align*}
for all squares $B$ of side length $\delta^{-2}$. Without loss of generality we may assume that $B$ is centered at the origin.
Using the Fourier support of $f$, we can write
\begin{align*}
f(x) = \int_{0}^{1}(\int_{-\delta^{2}}^{\delta^{2}}\wh{f}(s, s^2 + t)e(tx_2)\, dt)\, e(sx_1 + s^{2}x_2)\, ds.
\end{align*}
For $x \in B$ and since $B$ is centered at the origin and $|t| \leq \delta^{2}$, $tx_2$ does not oscillate much and so
we will pretend that $e(tx_2) \approx 1$. Then the above is essentially equal to $\E_{[0, 1]}F$ where
$F(s) := \int_{-\delta^2}^{\delta^2}\wh{f}(s, s^2 + t)\, dt.$
We are done after applying the definition of $D(\delta)$, undoing the definition of $\E_{J}F$, and adding the $e(tx_2)$ back in.
The rigorous proof in \cite[Theorem 5.1]{sg} involves expanding $e(tx_2)$
as a Taylor series and to each term in the Taylor series we create an $F_{j}$ and show that
$\nms{\E_{J}F_{j}}_{L^{6}(B)} \lsm \exp(O(j)) \nms{f_{\ta_{J}}}_{L^{6}(B)}$.
\end{rem}

Finally we will need a quantitative result about Fourier restriction to a parallelogram.
\begin{lemma}\label{hilbert}
For each $J \in P_{\delta}([0, 1])$ and $2 \leq p < \infty$, $\nms{f_{\ta'_{J, \delta}}}_{p} \leq ( \frac{1}{2} + \frac{1}{2}\cot(\frac{\pi}{2p}))^{4} \nms{f}_{p}$.
\end{lemma}
\begin{proof}
Let $S$ denote the operator defined by $\wh{S g}(\eta) = \wh{g}(\eta)1_{[0, \infty)}(\eta)$ for $\eta \in \R$.
If $H$ denotes the Hilbert transform, observe that $\wh{f}(\eta) + i\wh{Hf}(\eta) = 2\wh{S f}(\eta)$
almost everywhere.
Since $2 \leq p < \infty$, $\nms{H}_{p \rightarrow p} \leq \cot(\frac{\pi}{2p})$ and so $\nms{S}_{p \rightarrow p} \leq \frac{1}{2} + \frac{1}{2}\cot(\frac{\pi}{2p})$.

Let $R$ denote the operator defined by $\wh{R f}(\xi) = \wh{f}(\xi)1_{\ta'_{J, \delta}}(\xi)$ for $\xi = (\xi_1, \xi_2) \in \R^2$.
Each $\ta'_{J, \delta}$ is the intersection of four half planes in $\R^2$. The operator norm of Fourier restriction to a half plane
is the same as the operator norm for Fourier restriction to the half plane $[0, \infty) \times \R$.
By Fubini's Theorem, this operator norm is bounded above by the operator norm for $S$. Therefore
$\nms{R}_{p \rightarrow p} \leq \nms{S}_{p \rightarrow p}^{4} \leq (\frac{1}{2} + \frac{1}{2}\cot(\frac{\pi}{2p}))^{4}$
which completes the proof of Lemma \ref{hilbert}.
\end{proof}

We now have all the ingredients to give an explicit proof of Lemma \ref{parab_res}.
\begin{proof}
Write $I = [a, a + \sigma]$. We have
\begin{align}\label{quanteq0}
\nms{\E_{I}g}_{L^{6}(B)} \leq \nms{\E_{[a, a + \delta]}g}_{L^{6}(B)} + \nms{\eta_{B}\E_{[a + \delta, a + \sigma - \delta]}g}_{L^{6}(\R^2)} + \nms{\E_{[a + \sigma - \delta, a + \sigma]}g}_{L^{6}(B)}.
\end{align}
The Fourier transform of $\eta_{B}\E_{[a + \delta, a + \sigma - \delta]}g$ is supported in $\ta_{I, \delta}$
and so combining Lemma \ref{globalparab} with Lemma \ref{ineq} shows that
\begin{align}\label{quanteq1}
\nms{\eta_{B}\E_{[a + \delta, a + \sigma - \delta]}g}_{L^{6}(\R^2)} &\leq E^{64E}D(\frac{\delta}{\sigma}, E)(\sum_{J \in P_{\delta}(I)}\nms{(\eta_{B}\E_{[a + \delta, a + \sigma - \delta]}g)_{\ta_{J, \delta}}}_{L^{6}(\R^2)}^{2})^{1/2}.
\end{align}
Observe that
\begin{align*}
(\eta_{B}&\E_{[a + \delta, a + \sigma - \delta]}g)_{\ta_{J, \delta}}\\
&=
\begin{cases}
(\eta_{B}\E_{J_r}g)_{\ta_{J, \delta}} & \text{ if } J = [a, a + \delta]\\
(\eta_{B}\E_{J}g + \eta_{B}\E_{J_r}g)_{\ta_{J, \delta}} & \text{ if } J = [a + \delta, a + 2\delta]\\
(\eta_B \E_{J_{\ell}}g + \eta_B \E_{J}g + \eta_B \E_{J_r}g)_{\ta_{J, \delta}} & \text{ if } J \in P_{\delta}([a + 2\delta, a + \sigma - 2\delta])\\
(\eta_{B}\E_{J_{\ell}}g + \eta_{B}\E_{J}g)_{\ta_{J, \delta}} & \text{ if } J = [a + \sigma - 2\delta, a + \sigma - \delta]\\
(\eta_{B}\E_{J_{\ell}}g)_{\ta_{J, \delta}} & \text{ if } J = [a + \sigma - \delta, a + \sigma]
\end{cases}
\end{align*}
where $J_{\ell}$ and $J_{r}$ denote the intervals to the left and right of $J$.
Therefore for $J \in P_{\delta}([a, a + \sigma])$,
\begin{align*}
\nms{(\eta_{B}\E_{[a + \delta, a + \sigma - \delta]}g)_{\ta_{J,\delta}}}_{L^{6}(\R^2)}^{2} &\leq (\sum_{\st{J' \in \{J_{\ell}, J, J_{r}\}\\J' \subset [a + \delta, a + \sigma - \delta]}}\nms{(\eta_{B}\E_{J'}g)_{\ta_{J, \delta}}}_{L^{6}(\R^2)})^{2}\\
&= (\sum_{\st{J' \in \{J_{\ell}, J, J_{r}\}\\J' \subset [a + \delta, a + \sigma - \delta]}}\nms{(\eta_{B}\E_{J'}g)_{\ta_{J, \delta}'}}_{L^{6}(\R^2)})^{2}\\
&\leq 32^{2}(\sum_{\st{J' \in \{J_{\ell}, J, J_{r}\}\\J' \subset [a + \delta, a + \sigma - \delta]}}\nms{\E_{J'}g}_{L^{6}(\eta_B)})^{2}\\
&\leq 3 \cdot 32^{2} (2^{2E}E^{4E})^{1/3}\sum_{\st{J' \in \{J_{\ell}, J, J_{r}\}\\J' \subset [a + \delta, a + \sigma - \delta]}}\nms{\E_{J'}g}_{L^{6}(w_{B, E})}^{2}
\end{align*}
where in the first equality we have used that $\ta_{J} \subset \ta_{J}'$, in the second inequality we have used Lemma \ref{hilbert}, and in the third inequality we have
used Lemma \ref{etaweightwb}.
Inserting this into \eqref{quanteq1} shows that the right hand side is
\begin{align*}
&\leq E^{64E}(3 \cdot 32^{2} (2^{2E}E^{4E})^{1/3})^{1/2}D(\frac{\delta}{\sigma}, E)(\sum_{J \in P_{\delta}(I)}\sum_{\st{J' \in \{J_{\ell}, J, J_r\}\\J' \subset [a + \delta, a + \sigma - \delta]}}\nms{\E_{J'}g}_{L^{6}(w_{B, E})}^{2})^{1/2}\\
&\leq E^{65E}D(\frac{\delta}{\sigma}, E)(\sum_{J \in P_{\delta}(I)}\nms{\E_{J}g}_{L^{6}(w_{B, E})}^{2})^{1/2}.
\end{align*}
Inserting this into \eqref{quanteq0} and using that $1_{B} \leq 2^{E}w_{B, E}$ then shows that
\begin{align*}
\nms{\E_{I}g}_{L^{6}(B)} \leq E^{80E} D(\frac{\delta}{\sigma}, E)(\sum_{J \in P_{\delta}(I)}\nms{\E_{J}g}_{L^{6}(w_{B, E})}^{2})^{1/2}.
\end{align*}
Taking $E = 100$ completes the proof of Lemma \ref{parab_res} with explicit constants.
\end{proof}

\bibliographystyle{amsplain}
\bibliography{efdec_v3}

\providecommand{\bysame}{\leavevmode\hbox to3em{\hrulefill}\thinspace}
\providecommand{\MR}{\relax\ifhmode\unskip\space\fi MR }
\providecommand{\MRhref}[2]{%
  \href{http://www.ams.org/mathscinet-getitem?mr=#1}{#2}
}
\providecommand{\href}[2]{#2}
\begin{thebibliography}{10}

\bibitem{bourgain}
Jean Bourgain, \emph{Fourier transform restriction phenomena for certain
  lattice subsets and applications to nonlinear evolution equations. {I}.
  {S}chr\"odinger equations}, Geom. Funct. Anal. \textbf{3} (1993), no.~2,
  107--156.

\bibitem{zeta}
\bysame, \emph{Decoupling, exponential sums and the {R}iemann zeta function},
  J. Amer. Math. Soc. \textbf{30} (2017), no.~1, 205--224.

\bibitem{bourgainmvt}
\bysame, \emph{Decoupling inequalities and some mean-value theorems}, J. Anal.
  Math. \textbf{133} (2017), 313--334.

\bibitem{bd}
Jean Bourgain and Ciprian Demeter, \emph{The proof of the {$l^2$} decoupling
  conjecture}, Ann. of Math. (2) \textbf{182} (2015), no.~1, 351--389.

\bibitem{bdweyl}
\bysame, \emph{Mean value estimates for {W}eyl sums in two dimensions}, J.
  Lond. Math. Soc. (2) \textbf{94} (2016), no.~3, 814--838.

\bibitem{sg}
\bysame, \emph{A study guide for the {$l^2$} decoupling theorem}, Chin. Ann.
  Math. Ser. B \textbf{38} (2017), no.~1, 173--200.

\bibitem{bdguo}
Jean Bourgain, Ciprian Demeter, and Shaoming Guo, \emph{Sharp bounds for the
  cubic {P}arsell-{V}inogradov system in two dimensions}, Adv. Math.
  \textbf{320} (2017), 827--875.

\bibitem{bdg}
Jean Bourgain, Ciprian Demeter, and Larry Guth, \emph{Proof of the main
  conjecture in {V}inogradov's mean value theorem for degrees higher than
  three}, Ann. of Math. (2) \textbf{184} (2016), no.~2, 633--682.

\bibitem{bg}
Jean Bourgain and Larry Guth, \emph{Bounds on oscillatory integral operators
  based on multilinear estimates}, Geom. Funct. Anal. \textbf{21} (2011),
  no.~6, 1239--1295.

\bibitem{meansquare}
Jean Bourgain and Nigel Watt, \emph{{D}ecoupling for perturbed cones and mean
  square of $|\zeta(\frac{1}{2} + it)|$}, International Mathematics Research
  Notices \textbf{2018}, no.~17, 5219--5296.

\bibitem{prend}
Shaoming Guo, \emph{On a binary system of {P}rendiville: the cubic case}, Publ.
  Mat. \textbf{64} (2020), no.~1, 255--281.

\bibitem{guoliyung}
Shaoming Guo, Zane~Kun Li, and Po-Lam Yung, \emph{A bilinear proof of
  decoupling for the cubic moment curve}, arXiv:1906.07989, 2019.

\bibitem{glyzk}
Shaoming Guo, Zane~Kun Li, Po-Lam Yung, and Pavel Zorin-Kranich, \emph{A short
  proof of $\ell^2$ decoupling for the moment curve}, arXiv:1912.09798, 2019.

\bibitem{guozhang}
Shaoming Guo and Ruixiang Zhang, \emph{On integer solutions of
  {P}arsell-{V}inogradov systems}, Invent. Math. \textbf{218} (2019), no.~1,
  1--81.

\bibitem{guozorin}
Shaoming Guo and Pavel Zorin-Kranich, \emph{Decoupling for moment manifolds
  associated to {A}rkhipov-{C}hubarikov-{K}aratsuba systems}, Adv. Math.
  \textbf{360} (2020), 106889.

\bibitem{hbwooley}
D.~R. Heath-Brown, \emph{{T}he cubic case of {V}inogradov's mean value theorem
  -- a simplified approach to {W}ooley's ``efficient congruencing"},
  arXiv:1512.03272, 2015.

\bibitem{heathbrown}
\bysame, \emph{A new {$k$}th derivative estimate for exponential sums via
  {V}inogradov's mean value}, Proceedings of the Steklov Institute of
  Mathematics \textbf{296} (2017), no.~1, 88--103.

\bibitem{hormander}
Lars Hormander, \emph{The analysis of linear partial differential operators.
  {I}}, second ed., Fundamental Principles of Mathematical Sciences, vol. 256,
  Springer-Verlag, Berlin, 1990, Distribution theory and Fourier analysis.

\bibitem{thesis}
Zane~Kun Li, \emph{Decoupling for the parabola and connections to efficient
  congruencing}, Ph.D. thesis, UCLA, 2019, available at
  \texttt{https://escholarship.org/uc/item/0cz3756c}.

\bibitem{2ddec}
Zane~Kun Li, \emph{{E}ffective $l^2$ decoupling for the parabola}, Mathematika
  \textbf{66} (2020), no.~3, 681--712.

\bibitem{pierce}
Lillian~B. Pierce, \emph{{T}he {V}inogradov mean value theorem [after {W}ooley,
  and {B}ourgain, {D}emeter and {G}uth]}, Ast\'{e}risque Expos\'{e}s Bourbaki
  \textbf{407} (2019), 479--564.

\bibitem{tao2d}
Terence Tao, \emph{The two-dimensional case of the {B}ourgain-{D}emeter-{G}uth
  proof of the {V}inogradov main conjecture}, \texttt{terrytao.wordpress.com},
  2015.

\bibitem{taonotes}
\bysame, \emph{247b, {N}otes 2: {D}ecoupling theory},
  \texttt{terrytao.wordpress.com}, 2020.

\bibitem{wolff}
Thomas Wolff, \emph{Local smoothing type estimates on {$L^p$} for large {$p$}},
  Geom. Funct. Anal. \textbf{10} (2000), no.~5, 1237--1288.

\bibitem{wooleycubic}
Trevor~D. Wooley, \emph{The cubic case of the main conjecture in {V}inogradov's
  mean value theorem}, Adv. Math. \textbf{294} (2016), 532--561.

\bibitem{nested}
\bysame, \emph{Nested efficient congruencing and relatives of {V}inogradov's
  mean value theorem}, Proceedings of the London Mathematical Society
  \textbf{118} (2019), no.~4, 942--1016.

\end{thebibliography}
\end{document}